\documentclass[10.5pt,a4paper]{article}
\let\originalss\ss

\usepackage{graphicx,latexsym,euscript,makeidx,color,bm}
\usepackage{amsmath,amsfonts,amssymb,amsthm,thmtools,mathtools,mathrsfs,enumerate}
\usepackage[colorlinks,linkcolor=blue,citecolor=red]{hyperref}

\usepackage{geometry}
\allowdisplaybreaks[4]
   \def\cA{{\cal A}}  
     
   \def\cC{{\cal C}}  
     
\def\dbE{\mathbb{E}}     
\def\dbF{\mathbb{F}}

   \def\cP{{\cal P}}
   
\def\dbR{\mathbb{R}}   
\def\dbS{\mathbb{S}}

   \def\cW{{\cal W}}

\def\ss{\smallskip}             \def\hb{\hbox}
\def\ms{\medskip}

\def\q{\quad}                \def\tr{\mathop{\hb{\rm tr}}\nolimits}
\def\qq{\qquad}             
\def\no{\noindent}

\def\rf{\eqref}

\def\3n{\negthinspace \negthinspace \negthinspace }
\def\2n{\negthinspace \negthinspace }
\def\1n{\negthinspace }
\def\bel{\begin{equation}\label}
\def\ee{\end{equation}}
\def\bea{\begin{eqnarray}}
\def\eea{\end{eqnarray}}
\def\bt{\begin{theorem}\label}
\def\et{\end{theorem}}
\def\bc{\begin{corollary}\label}
\def\ec{\end{corollary}}
\def\bex{\begin{example}\label}
\def\ex{\end{example}}
\def\bl{\begin{lemma}\label}
\def\el{\end{lemma}}
\def\bp{\begin{proposition}\label}
\def\ep{\end{proposition}}
\def\br{\begin{remark}\label}
\def\er{\end{remark}}
\def\ba{\begin{array}}
\def\ea{\end{array}}
\def\bde{\begin{definition}\label}
\def\ede{\end{definition}}

\def\d{\delta}

\def\k{\kappa}                          

\newtheoremstyle{thry}
{}
{}
{\sl}
{}
{\bf}
{.}
{.5em}
{}
\theoremstyle{thry}

\newtheorem{theorem}{Theorem}[section]

\newtheorem{proposition}[theorem]{Proposition}
\newtheorem{corollary}[theorem]{Corollary}
\newtheorem{lemma}[theorem]{Lemma}

\theoremstyle{definition}
\newtheorem{definition}[theorem]{Definition}
\newtheorem{example}[theorem]{Example}

\theoremstyle{remark}
\newtheorem{remark}[theorem]{Remark}

\def\punct{}
\newtheoremstyle{dotless}{}{}{\rm}{}{\bf}{\punct}{.5em}{}
\theoremstyle{dotless}
\newtheorem{taggedthmx}{}
\newenvironment{taggedthm}[1]
 {\def\punct{.}\renewcommand\thetaggedthmx{#1}\taggedthmx}
 {\endtaggedthmx}
\newtheorem{taggedassumptionx}{}
\newenvironment{taggedassumption}[1]
 {\renewcommand\thetaggedassumptionx{#1}\taggedassumptionx}
 {\endtaggedassumptionx}

\makeatletter

   \@addtoreset{equation}{section}

   \newcommand{\setword}[2]{%
   \phantomsection
   #1\def\@currentlabel{\unexpanded{#1}}\label{#2}%
   }
\makeatother

\def\Ent{\mathop{\hb{\rm Ent}}\nolimits}
\newcommand{\cNcal}{\mathcal N}

\begin{document}

\title{\bf Turnpike Properties for Exploratory Stochastic Linear-Quadratic Optimal Control with   Indefinite Cost Functionals}

\author{
Yaohua Hu\thanks{School of Mathematical Sciences, Shenzhen University, Shenzhen 518060, China, Email: mayhhu@szu.edu.cn.}
~~~
Zhaoqing Li\thanks{School of Mathematical Sciences, Shenzhen University, Shenzhen 518060, China, Email: lizhaoqing2024@mails.szu.edu.cn.}
~~~
Hanxiao Wang\thanks{School of Mathematical Sciences, Shenzhen University, Shenzhen 518060, China, Email: hxwang@szu.edu.cn. }
~~~
Jie Xiong\thanks{Department of Mathematics and SUSTech International center for Mathematics, Southern University of Science \& Technology, Shenzhen  518055, China, Email: xiongj@sustech.edu.cn.}
}

\date{\today}

\maketitle

\vskip-0.5cm

\no{\bf Abstract.}
Continuous-time exploratory stochastic control provides a stochastic control framework for the exploration--exploitation trade-off in reinforcement learning.
In this paper, we study the long-time behavior of finite-horizon exploratory stochastic linear-quadratic (LQ) optimal control problems as the time horizon \(T\to\infty\), and establish so-called \emph{turnpike properties} for such problems.
In the finite-horizon setting, we derive a feedback representation of the exploratory stochastic LQ problem and analyze its long-time structure.
We formulate an exploratory static distribution optimization problem analogous to the classical static optimization problem and identify the corresponding static reference object.
We prove a turnpike property for the optimal state process relative to a stochastic reference process, as well as a Wasserstein turnpike property for the optimal control distribution.
In addition to the exploratory control formulation, another key feature of our results is that, by introducing a  \emph{joint coercivity} condition, we allow the cost functional to be indefinite and the controlled state system to be merely stabilizable.
Consequently, the derived turnpike property is novel even within the classical stochastic LQ control framework.

\ms
\no{\bf Keywords.}
turnpike property, stochastic linear-quadratic control, exploratory control, entropy regularization, indefinite cost functional.

\ms
\no{\bf AMS 2020 Mathematics Subject Classification.} 49N10, 49N80, 93D23, 93E15, 93E20.

\ms

\section{Introduction}
\label{sec:introduction}

Let $(\Omega,\mathcal F,\mathbb P)$ be a complete probability space on which the Brownian motions used in this paper are defined.
Here $W=\{W(t):t\geq0\}$ is a standard one-dimensional Brownian motion, $\widehat W=\{\widehat W(t):t\geq0\}$ is a standard $n$-dimensional Brownian motion, and $\widetilde W=\{\widetilde W(t):t\geq0\}$ is a standard $m$-dimensional Brownian motion.
We assume that $W$ and $\widetilde W$ are independent.
Let $\mathbb F=\{\mathcal F_t\}_{t\geq0}$ be the usual augmentation of the natural filtration generated by these Brownian motions.
All random variables and processes appearing below are assumed to be measurable and adapted to $\mathbb F$;
admissible control distribution processes are  $\mathbb F$-progressively measurable.

This paper studies continuous-time exploratory stochastic linear-quadratic (LQ) control problems.
We start from a general controlled diffusion.
Consider the stochastic differential equation (SDE) with point-valued controls
\begin{equation*}
    \left\{
    \begin{aligned}
    &dX^u(t) = b(X^u(t),u(t))dt + \sigma(X^u(t),u(t))dW(t), \qquad t\geq0,\\
    &X^u(0)=x,
    \end{aligned}
    \right.
\end{equation*}
where $b$ and $\sigma$ are the drift and diffusion coefficients.
The usual measurability, growth, and integrability conditions are assumed to make this state equation well posed.
%If we take a control distribution $\pi\in\mathcal P(U)$ at state $x$, then the exploratory framework of %Wang--Zariphopoulou--Zhou \cite{WangZariphopoulouZhou2020} replaces the point-control coefficients in the state equation by %distributional averages
%\begin{equation*}
%    \widetilde b(x,\pi) = \int_U b(x,u)\,\pi(du),
%\end{equation*}
%and defines the exploratory diffusion through
%\begin{equation*}
%    \widetilde\sigma(x,\pi)\widetilde\sigma(x,\pi)^\top = \int_U \sigma(x,u)\sigma(x,u)^\top\,\pi(du).
%\end{equation*}
%Thus the general exploratory state equation can be written as
%\begin{equation}\label{explore-state}
%\left\{\begin{aligned}
 %   &dX^\pi(t) = \widetilde b(X^\pi(t),\pi_t)dt + \widetilde\sigma(X^\pi(t),\pi_t)d\widehat W(t), \\
%    & X^\pi(0)=x.
%    \end{aligned}\right.
%\end{equation}
%Let $\rho(x,u)$ be the running cost under a point control.
%Under a control distribution $\pi$, the exploratory running cost is defined by
%\begin{equation*}
 %   \widetilde \rho(x,\pi) = \int_U \rho(x,u)\,\pi(du).
%\end{equation*}
%If $\pi$ has density $p$, its differential entropy is
%$$
%\Ent(\pi)=-\int_U p(u)\log p(u)\,du.
%$$

%We now introduce the LQ framework. Let $U=\mathbb R^m$. The coefficients of the state equation \rf{explore-state} are given by
%\begin{equation}\label{LQ-coefficients}
 %   b(x,u)=Ax+Bu+b,\qq
%    \sigma(x,u)=Cx+Du+\sigma.
%\end{equation}
%The running cost is given by
%\begin{equation*}
%    \begin{aligned}
%    \rho(x,u) &=  x^\top Qx +2x^\top S^\top u +u^\top Ru +2q^\top x +2r^\top u .
%    \end{aligned}
%\end{equation*}

Following Wang--Zariphopoulou--Zhou
\cite{WangZariphopoulouZhou2020}, for a control distribution
$\pi\in\mathcal P(U)$ and a pointwise running cost $\rho(x,u)$,
we define the averaged drift, diffusion covariance, and running cost,
as well as the differential entropy when $\pi$ has density $p$:
\begin{equation*}
	\begin{alignedat}{2}
		&\widetilde b(x,\pi)=\int_U b(x,u)\,\pi(du),
		\qquad
		&&\widetilde\rho(x,\pi)=\int_U\rho(x,u)\,\pi(du),\\
		&\widetilde\sigma(x,\pi)\widetilde\sigma(x,\pi)^\top
		=\int_U\sigma(x,u)\sigma(x,u)^\top\,\pi(du),
		\qquad
		&&\Ent(\pi)=-\int_U p(u)\log p(u)\,du.
	\end{alignedat}
\end{equation*}
The exploratory state equation is
\begin{equation}\label{explore-state}
	\left\{
	\begin{aligned}
		&dX^\pi(t)=\widetilde b(X^\pi(t),\pi_t)dt
		+\widetilde\sigma(X^\pi(t),\pi_t)d\widehat W(t),\\
		&X^\pi(0)=x.
	\end{aligned}
	\right.
\end{equation}
We now specialize to the LQ setting with $U=\mathbb R^m$:
\begin{equation}\label{LQ-coefficients}
	\begin{aligned}
		&b(x,u)=Ax+Bu+b,\qquad
		\sigma(x,u)=Cx+Du+\sigma,\\
		&\rho(x,u)=x^\top Qx+2x^\top S^\top u+u^\top Ru
		+2q^\top x+2r^\top u.
	\end{aligned}
\end{equation}

In the exploratory framework, the control variable is  a process of probability distributions $\pi=\{\pi_t:t\in[0,T]\}$ on the control space $\mathbb R^m$.
Denote the mean and covariance of $\pi_t$ by
\begin{equation}\label{control-moments}
    \bar u(t)=\int_{\mathbb R^m}u\,\pi_t(du),\qq
    \Lambda(t)=\int_{\mathbb R^m}(u-\bar u(t))(u-\bar u(t))^\top\pi_t(du).
\end{equation}
Substituting the LQ coefficients \eqref{LQ-coefficients} and the control moments \eqref{control-moments} into the general exploratory state equation \eqref{explore-state} gives the following equivalent exploratory LQ state equation:
\begin{equation*}
    \left\{
    \begin{aligned}
    &dX^\pi(t) = \bigl(AX^\pi(t)+B\bar u(t)+b\bigr)dt+ \bigl(CX^\pi(t)+D\bar u(t)+\sigma\bigr)dW(t)\\
    &\qq\qq~ + D\Lambda(t)^{1/2}d\widetilde W(t), \qquad t\in[0,T],\\
    &X^\pi(0)=x.
    \end{aligned}
    \right.
\end{equation*}
Here $W$ represents the original system noise and $\widetilde W$ represents the additional noise generated by the covariance of the randomized control.
%The entropy-regularized exploratory LQ cost functional on the finite horizon $[0,T]$ is given by
%\begin{equation*}
 %   \begin{aligned}
%    &J_T^\lambda(x;\pi) = \mathbb E\int_0^T \int_{\mathbb R^m} \Bigl[ X^\pi(t)^\top QX^\pi(t) +2X^\pi(t)^\top S^\top u +u^\top Ru+2q^\top X^\pi(t) \\
 %   &\qquad\qquad\q+2r^\top u \Bigr]\pi_t(du)\,dt  + \lambda\mathbb E\int_0^T\int_{\mathbb R^m}p_t(u)\log p_t(u)\,du\,dt ,
%    \end{aligned}
%\end{equation*}
%where $p_t$ is the density of $\pi_t$ and $\lambda>0$ is the exploration weight.
%The entropy regularization term $\lambda\int_{\mathbb R^m}p_t(u)\log p_t(u)\,du$ is precisely $-\lambda\Ent(\pi_t)$.
%Since the cost is minimized, this term favors larger entropy and promotes randomized exploration.
%Thus $\lambda$ controls the strength of exploration: a larger $\lambda$ gives more weight to exploration, while a smaller %$\lambda$ brings the model closer to the classical point-control problem.

The entropy-regularized cost on $[0,T]$ is
\begin{equation*}
	\begin{aligned}
		&J_T^\lambda(x;\pi)
		=\mathbb E\int_0^T\int_{\mathbb R^m}
		\Bigl[
		X^\pi(t)^\top QX^\pi(t)
		+2X^\pi(t)^\top S^\top u+u^\top Ru\\
		&\q\qquad\qquad
		+2q^\top X^\pi(t)+2r^\top u
		\Bigr]\,\pi_t(du)\,dt
		-\lambda\mathbb E\int_0^T\Ent(\pi_t)\,dt,
	\end{aligned}
\end{equation*}
where $\lambda>0$ is the exploration weight.
Since the cost is minimized, the entropy term encourages exploration:
a larger $\lambda$ gives more weight to exploration, while a smaller
$\lambda$ brings the model closer to the classical point-control problem.

The finite-horizon exploratory LQ problem can be stated as follows.

\begin{taggedthm}{Problem $(\mathrm{ESLQ})_T$}\label{prob:exploratory-finite}
For a given initial state $x\in\mathbb R^n$, find $\bar\pi^T$ in the admissible class of exploratory control distributions such that
\begin{equation*}
    J_T^\lambda(x;\bar\pi^T) = \inf_{\pi}J_T^\lambda(x;\pi) \equiv V_T^\lambda(x).
\end{equation*}
\end{taggedthm}
If such a distribution process exists, it is called an optimal exploratory control distribution of \ref{prob:exploratory-finite}, and the corresponding state process is denoted by $\bar X^T$.

Our main question concerns the long-time behavior of the optimal pair $(\bar X^T,\bar\pi^T)$ as $T\to\infty$.
More precisely, we seek static or stationary reference objects that are independent of $T$, such that the finite-horizon optimal processes remain close to these objects for most of the interval $[0,T]$.
Since the control object in this paper is a probability distribution, a reasonable turnpike property should compare not only the state and the mean control but also the full control distributions.
Thus, we use the Wasserstein distance $\cW_2$ to measure the difference between control distributions.

The main result of this paper is that, under the stabilizable condition \ref{ass:L2-stabilizable} and the joint coercivity condition \ref{ass:JC},
 we establish an estimate of the form
\begin{equation}\label{T-P1}
    \mathbb E|\bar X^T(t)-X^*(t)|^2 + \mathbb E \cW_2^2 \bigl( \bar\pi_t^T,\pi_t^* \bigr) \leq K\left(e^{-\mu t}+e^{-\mu(T-t)}\right),
    \qquad t\in[0,T],
\end{equation}
%where $K,\mu>0$ are independent of $T,t$, and $X^*$ and $\pi^*$ are suitable long-time reference processes and reference %control distributions, which can be uniquely determined by solving a static optimization problem
%(see Theorem \ref{Thm:TP}).
where $K,\mu>0$ are independent of $T,t$, and the reference
	process $X^*$ and control distributions $\pi^*$ are constructed from the static
	optimizer and the limiting feedback parameters
	(see Theorem~\ref{Thm:TP}).
This estimate is called a \emph{turnpike property} for the exploratory stochastic LQ problem.
It means that, away from the initial and terminal endpoints, the finite-horizon optimal objects are exponentially close to the  reference objects (see Corollary \ref{coroll:TP}).

The study of turnpike properties began with long-run economic growth problems \cite{Dorfman-Samuelson-Solow1958,McKenzie1976,Neumann1945}, and has since been developed widely in optimal control, variational problems, and control of partial differential equations; see \cite{Faulwasser-Grune2022,LewisPaul2019,Porretta-Zuazua2013,Sethi-Soner-Zhang-Jiang1992,Trelat-Zhang2018,TrelatZuazua2025Survey,Trelat-Zuazua2015,Zaslavski2006}.
For deterministic and discrete-time LQ problems, the static optimization problem, the Riccati equation, and the closed-loop feedback have a clear relation.
Hence the turnpike property can be described by a static optimal point; related results can be found in \cite{Breiten-Pfeiffer2020,Damm-Grune-Stieler-Worthmann2014,Grune-Guglielmi2018,Grune-Guglielmi2021}.
For  stochastic LQ problems,
Sun--Wang--Yong \cite{SunWangYong2022} studied turnpike properties for stochastic LQ problems and pointed out that the correct form of the static problem is a key difference between the stochastic and deterministic cases.
Furthermore, Sun--Yong \cite{SunYong2022} proved turnpike properties for mean-field LQ problems and stressed that the proper reference object in stochastic systems is usually not a deterministic point, but a $T$-independent stochastic reference process.
Further studies of stochastic LQ turnpike properties can be found in \cite{Bayraktar-Jian2026,HuWu2026,MeiWangYong2025,SchiesslOuFaulwasserBaumannGrune2025,SunYong2024ZeroSum}.

Alongside these developments in turnpike theory, the entropy-regularized exploratory-control framework of Wang--Zariphopoulou--Zhou \cite{WangZariphopoulouZhou2020} has led to a growing literature on  reinforcement learning. In particular,
Jia--Zhou \cite{JiaZhou2022PE,JiaZhou2022PG,JiaZhou2023qLearning} systematically developed a martingale approach to policy evaluation and temporal-difference learning, and subsequently established policy-gradient, actor--critic, and continuous-time $q$-learning theories under the exploratory formulation.
In LQ settings, Hambly--Xu--Yang \cite{HamblyXuYang2021} studied policy-gradient methods for finite-horizon noisy problems in discrete time.
For entropy-regularized LQ problems, Guo--Li--Xu \cite{GuoLiXu2026} and Giegrich--Reisinger--Zhang \cite{GiegrichReisingerZhang2024} analyzed policy learning in discrete and continuous time, respectively.
Huang--Jia--Zhou \cite{HuangJiaZhou2025} obtained a sublinear regret bound for a model-free algorithm, and Szpruch--Treetanthiploet--Zhang \cite{SzpruchTreetanthiploetZhang2024} studied the optimal scheduling of the entropy regularizer.
Huang--Zhou \cite{HuangZhou2025DataDriven} further proposed a data-driven adaptive exploration mechanism.
% The exploratory LQ framework also has applications to mean--variance portfolio selection;
% see Wang--Zhou \cite{WangZhou2020MV}, Dai--Dong--Jia \cite{DaiDongJia2023},  and Huang--Jia--Zhou \cite{HuangJiaZhou2026MV}, for example.
The literature also includes studies of mean-field games
	\cite{GuoHuXuZhang2023,GuoXuZariphopoulou2022}
	and applications to mean--variance portfolio selection
	\cite{DaiDongJia2023,HuangJiaZhou2026MV,WangZhou2020MV}.
These works mainly study learning and exploration mechanisms.
This paper focuses on the turnpike property, namely the estimate \rf{T-P1}, for exploratory stochastic LQ control. This can be viewed as a fine-grained behavioral characterization of the optimal state processes and optimal control distributions.

Compared with existing results, our work has two main features.

(i) In the exploratory framework, the control variable is a process of probability distributions.
Thus, the turnpike property of the controls in our paper needs to be established in the Wasserstein space.

To this end, we propose an exploratory static problem defined over general distributions rather than over the Euclidean space as in \cite{SunWangYong2022}.
Specifically, it takes the following form:
\begin{equation}
    \label{distribution-problem-intro}
    \begin{aligned}
    \min_{x,\pi}\quad & \int_{\dbR^m}F_0(x,u)\,\pi(du)-\lambda\Ent(\pi)\\
    \text{s.t.}\quad & Ax+B\bar u_\pi+b=0,
    \end{aligned}
\end{equation}
where
\begin{equation}\label{F0}
    F_0(x,u)=x^\top Qx+2x^\top S^\top u+u^\top Ru+2q^\top x+2r^\top u+(Cx+Du+\sigma)^\top P_\infty(Cx+Du+\sigma),
\end{equation}
%with $P_\infty$ being the unique static stabilizing solution of the corresponding algebraic Riccati equation, and
with $P_\infty$ being the unique stabilizing solution of the
algebraic Riccati equation \eqref{ARE}, and
\begin{equation*}
    \begin{aligned}
    &\bar u_\pi=\int_{\mathbb R^m}u\,\pi(du),\q
    \Lambda_\pi=\int_{\mathbb R^m}(u-\bar u_\pi)(u-\bar u_\pi)^\top\pi(du),\q
    \Ent(\pi)=-\int_{\mathbb R^m}p(u)\log p(u)\,du,
    \end{aligned}
\end{equation*}
with $\pi(du)=p(u)\,du$.

Note that in \rf{distribution-problem-intro}, $\pi$ ranges over proper probability distributions,
and the equality constraint is expressed in terms of the mean of $\pi$.
Thus, it is still an infinite-dimensional constrained optimization problem.
Using \cite[Theorem 8.6.5]{CoverThomas2006} (see Lemma~\ref{lem:maximum-entropy}),
we show that the exploratory static distribution optimization problem is equivalent to the following
finite-dimensional one:
\begin{equation}
\label{Gaussian-problem-Intro}
    \begin{aligned}
    \min_{x,u,\Lambda}\quad & F_0(x,u) + \tr(M_\infty\Lambda) - \frac{\lambda}{2}\log\big((2\pi e)^m\det\Lambda\big)\\
    \text{s.t.}\quad & Ax+Bu+b=0,
    \end{aligned}
\end{equation}
where $M_\infty:=R+D^\top P_\infty D$, and $u$ and $\Lambda$ are the mean and covariance of the distribution $\pi$, respectively.

(ii) The second feature is that the cost functional is indefinite, meaning that the weighting matrices need not be positive semidefinite.
%, the matrices $Q$ and $P_\infty$ in \rf{F0} may be indefinite,
%which causes essential difficulties in solving the static optimization problem \rf{Gaussian-problem-Intro}.
We remark that the indefinite case is also novel even for the turnpike property of classical stochastic LQ control problems without exploration.

We assume that the controlled system $[A,C;B,D]$ is stabilizable (see \ref{ass:L2-stabilizable}),
which includes the stable system as a special case.
A main contribution of this paper is the introduction of the so-called \emph{joint coercivity} condition (see \ref{ass:JC}).
%For the finite-horizon problem and for the infinite-horizon problem with a stable controlled system,
%this condition is equivalent to the well-known \emph{uniform convexity} condition (see Remark \ref{remark2.10} and %Proposition \ref{prop:JC-UC}) introduced by Sun--Li--Yong \cite{Sun-Li-Yong2016}.
On each fixed finite horizon, this condition is equivalent to the well-known \emph{uniform convexity} condition \cite{Sun-Li-Yong2016}, with constants that may depend on the	horizon (see Remark~\ref{remark2.10}).
If the uncontrolled system is $L^2$-stable, joint coercivity is	equivalent to uniform convexity with a constant independent of the horizon (see Proposition~\ref{prop:JC-UC}).
Thus, the joint coercivity condition is a natural extension of the uniform convexity condition,
enabling us to handle infinite-horizon stabilizable systems in the indefinite cost functional case.

The indefinite cost functional gives rise to several significant difficulties in establishing the turnpike property.
Since the matrices $Q$ and $P_\infty$ in \rf{F0} may be indefinite,
solving the static optimization problem \rf{Gaussian-problem-Intro} is itself nontrivial.
By a careful convexity analysis of $F_0$, we show that problem \rf{Gaussian-problem-Intro} still admits a unique solution $(x_*,u_*, \Lambda_*)$, even when $P_\infty$ is indefinite (see Propositions \ref{prop:static-KKT} and \ref{prop:optimal-covariance}).
Then, under Assumptions \ref{ass:L2-stabilizable}--\ref{ass:JC}, we prove that \eqref{distribution-problem-intro} admits a unique optimal solution $(x_*,\pi_*^\lambda)$, with the optimal control distribution being Gaussian, without assuming Gaussianity a priori:
\begin{equation*}
    \pi_*^\lambda = \mathcal N \left( u_*, \frac{\lambda}{2} (R+D^\top P_\infty D)^{-1} \right).
\end{equation*}
This shows that the exploratory regularization does not shift the static center.
Instead, it yields Gaussian exploration around the static control point $u_*$, with covariance determined by $\lambda$ and  $P_\infty$.

To establish the turnpike property \rf{T-P1} (see Theorem \ref{Thm:TP}), we use the unique optimal solution $(x_*,\pi_*^\lambda)$ to problem \eqref{distribution-problem-intro}, establish the convergence of the Riccati equations (see Lemma \ref{lem:riccati-convergence} and Proposition \ref{prop:riccati-convergence}), and exploit certain refined properties of the Wasserstein distance between Gaussian distributions (see Lemma \ref{lem:Gaussian-W2}).
It is worth pointing out that, since the cost functional is indefinite, the method employed by Sun--Wang--Yong \cite{SunWangYong2022} cannot be applied to establish the convergence of the Riccati equations in Lemma \ref{lem:riccati-convergence} and Proposition \ref{prop:riccati-convergence}.
We transform the stabilizable system into a stable one and, inspired by Sun--Yong \cite{SunYong2024ZeroSum}, overcome this difficulty by means of a Hilbert-space approach.
More importantly, we prove that the convergence of the Riccati equations is equivalent to the satisfaction of the joint coercivity condition (see Theorem \ref{Thm-equiv}).
Since the convergence of the Riccati equations is fundamental to the turnpike property, this shows that the joint coercivity condition introduced in this paper is sharp.
%Under $L^2$-stabilizability and within the class of strongly regular
%	finite-horizon Riccati solutions, Proposition~\ref{prop:riccati-convergence}
%	and Theorem~\ref{Thm-equiv} characterize joint coercivity by convergence
%	to a stabilizing ARE solution with $R+D^\top P_\infty D>0$.
%	Thus, joint coercivity is sharp for this stabilizing Riccati limit.

The remainder of the paper is organized as follows.
Subsection~\ref{sec:preliminaries-exploratory-slq} presents the precise formulation of the exploratory stochastic LQ problem.
Subsection~\ref{sec:infinite-horizon-problem} reviews the stability of stochastic linear systems and derives the optimal feedback for the infinite-horizon stochastic LQ problem.
Section~\ref{sec:static-distribution-optimization} establishes the existence and uniqueness of the solution to the classical static problem without requiring the cost functional to be positive semidefinite, and constructs its unique Gaussian exploratory extension.
Section~\ref{sec:riccati-parameter-convergence} proves exponential convergence of the solutions of the finite-horizon Riccati equations, the feedback parameters, and the exploration covariance to their infinite-horizon counterparts.
Section~\ref{sec:turnpike-properties} establishes the turnpike property for the optimal state process and the Wasserstein turnpike property for the optimal control distributions.

\section{Preliminaries}
\subsection{Exploratory Stochastic LQ Problems}
\label{sec:preliminaries-exploratory-slq}

We first fix some notation used below.
Let $\dbR^{n\times m}$ be the space of $n\times m$ real matrices, with the Frobenius inner product $\langle M,N\rangle=\tr(M^\top N)$, $M,N\in\dbR^{n\times m}$.
The norm induced by this inner product is denoted by $|M|$ or $\|\cdot\|_F$.
For matrices, $\|\cdot\|$ denotes the spectral norm; for linear operators,
	$\|\cdot\|$ denotes the induced operator norm.
Let $\dbS^k$ be the space of $k\times k$ real symmetric matrices, and let $\dbS^k_+$ and $\dbS^k_{++}$ be the sets of positive semidefinite and positive definite matrices, respectively.
For $P,Q\in\dbS^k$, we write $P\ge Q$ if $P-Q\in\dbS^k_+$, and $P>Q$ if $P-Q\in\dbS^k_{++}$.
The identity matrix is denoted by $I_k$.
We use the probability space and filtration introduced in Section~\ref{sec:introduction}, and all processes involved are assumed to be $\dbF$-progressively measurable.

%The control object in this paper is a probability distribution on the control space $\dbR^m$.
%Let
%\[
%    \cP_2(\dbR^m) = \left\{ \pi\in\cP(\dbR^m)\ \middle|\ \int_{\dbR^m}|u|^2\,\pi(du)<\infty \right\}.
%\]
%For $\pi\in\cP_2(\dbR^m)$, define
%\begin{equation*}
%    \bar u(\pi)=\int_{\dbR^m}u\,\pi(du),
%\end{equation*}
%and
%\begin{equation*}
%    \Lambda(\pi) = \int_{\dbR^m} (u-\bar u(\pi))(u-\bar u(\pi))^\top\,\pi(du).
%\end{equation*}
%Then
%\begin{equation*}
%    \int_{\dbR^m}uu^\top\,\pi(du) = \bar u(\pi)\bar u(\pi)^\top+\Lambda(\pi).
%\end{equation*}
%If $\pi$ is absolutely continuous with respect to the Lebesgue measure, with density $p$, then its differential entropy is %defined by
%\begin{equation*}
%    \Ent(\pi) = -\int_{\dbR^m}p(u)\log p(u)\,du.
%\end{equation*}
%For a Gaussian distribution $\pi=\cNcal(u,\Lambda)$, where $\Lambda\in\dbS^m_{++}$, we have
%\begin{equation*}
%    \Ent\big(\cNcal(u,\Lambda)\big) = \frac12\log\big((2\pi e)^m\det\Lambda\big).
%\end{equation*}

%Now we state the finite-horizon exploratory stochastic LQ problem which is studied in this paper.
%Let $A,C\in\dbR^{n\times n}$, $B,D\in\dbR^{n\times m}$, $b,\sigma,q\in\dbR^n$, $r\in\dbR^m$, $Q\in\dbS^n$, $R\in\dbS^m$, and %$S\in\dbR^{m\times n}$.
%Given a control distribution process $\pi=\{\pi_t\}_{0\le t\le T}$, set $\bar u(t)=\bar u(\pi_t)$ and %$\Lambda(t)=\Lambda(\pi_t)$.
%The exploratory state process $X^\pi$ satisfies

Let $\cP_2(\dbR^m)$ denote the probability distributions on
$\dbR^m$ with finite second moment. For $\pi\in\cP_2(\dbR^m)$,
define its mean and covariance by
\begin{equation*}
	\bar u(\pi)=\int_{\dbR^m}u\,\pi(du),
	\qquad
	\Lambda(\pi)=\int_{\dbR^m}
	(u-\bar u(\pi))(u-\bar u(\pi))^\top\,\pi(du).
\end{equation*}
Then $\int_{\dbR^m}uu^\top\,\pi(du)
=\bar u(\pi)\bar u(\pi)^\top+\Lambda(\pi)$.
If $\pi$ has a Lebesgue density $p$, its differential entropy
and the Gaussian entropy formula are
\begin{equation*}
	\Ent(\pi)=-\int_{\dbR^m}p(u)\log p(u)\,du,
	\qquad
	\Ent\big(\cNcal(u,\Lambda)\big)
	=\frac12\log\big((2\pi e)^m\det\Lambda\big),
\end{equation*}
where $\Lambda\in\dbS^m_{++}$ in the Gaussian case.

For the finite-horizon LQ problem, let
$A,C\in\dbR^{n\times n}$, $B,D\in\dbR^{n\times m}$,
$b,\sigma,q\in\dbR^n$, $r\in\dbR^m$,
$Q\in\dbS^n$, $R\in\dbS^m$, and $S\in\dbR^{m\times n}$.
Given a control distribution process
$\pi=\{\pi_t\}_{0\le t\le T}$, set
$\bar u(t)=\bar u(\pi_t)$ and $\Lambda(t)=\Lambda(\pi_t)$.
The exploratory state process $X^\pi$ satisfies

\begin{equation}\label{state}
    \left\{
    \begin{aligned}
    &dX^\pi(t) = \bigl(AX^\pi(t)+B\bar u(t)+b\bigr)dt+ \bigl(CX^\pi(t)+D\bar u(t)+\sigma\bigr)dW(t)\\
    &\qq\qq\q + D\Lambda(t)^{1/2}d\widetilde W(t), \qquad t\in[0,T],\\
    &X^\pi(0)=x.
    \end{aligned}
    \right.
\end{equation}
For $x\in\dbR^n$ and $\pi\in\cP_2(\dbR^m)$, define
the running cost under the control distribution $\pi$ by
\begin{equation*}
	\ell(x,\pi)
	=\int_{\dbR^m}
	\bigl[
	x^\top Qx+2x^\top S^\top u+u^\top Ru
	+2q^\top x+2r^\top u
	\bigr]\,\pi(du).
\end{equation*}

For a given initial state $x\in\dbR^n$ and exploration weight $\lambda>0$, a control distribution process $\pi=\{\pi_t\}_{0\le t\le T}$ belongs to $\cA_T^\lambda(x)$ if it satisfies the following conditions:
\begin{enumerate}[(i)]
    \item for each $t\in[0,T]$, $\pi_t\in\cP_2(\dbR^m)$ and $\pi_t$ is absolutely continuous with respect to the Lebesgue measure, with density $p_t$;
    \item for every Borel set $E\subset\dbR^m$, the process $t\mapsto \pi_t(E)$ is $\dbF$-progressively measurable;
    \item the mean and covariance satisfy $\dbE\int_0^T\big(|\bar u(t)|^2+\tr\Lambda(t)\big)dt<\infty$;
    \item the entropy regularization term is integrable, namely $\dbE\int_0^T\left|\int_{\dbR^m}p_t(u)\log p_t(u)\,du\right|dt<\infty$;
    \item under $\pi$, the state equation \eqref{state} is well posed.
\end{enumerate}
For the linear state equation, condition (iii) yields the usual second-moment estimate for $X^\pi$.
Hence, together with condition (iv), the above assumptions ensure that all terms in the cost functional below are integrable.

\begin{taggedthm}{\ref{prob:exploratory-finite}}
For a given $x\in\dbR^n$, over all $\pi\in\cA_T^\lambda(x)$, minimize
\begin{equation*}
    J_T^\lambda(x;\pi) = \dbE\int_0^T \left[ \ell(X^\pi(t),\pi_t) + \lambda\int_{\dbR^m}p_t(u)\log p_t(u)\,du \right]dt.
\end{equation*}
The value function is defined by
\begin{equation*}
    V_T^\lambda(x) = \inf_{\pi\in\cA_T^\lambda(x)}J_T^\lambda(x;\pi).
\end{equation*}
\end{taggedthm}

\begin{remark}
If there exists $\pi^{T,\lambda,*}\in\cA_T^\lambda(x)$ such that $J_T^\lambda(x;\pi^{T,\lambda,*})=V_T^\lambda(x)$, then $\pi^{T,\lambda,*}$ is called an \emph{optimal control distribution} of \ref{prob:exploratory-finite}.
\end{remark}

The exploratory control framework and the entropy-regularized objective were first introduced  in \cite{WangZariphopoulouZhou2020}.
The exploratory HJB equation and its well-posedness theory can be found in \cite{TangZhangZhou2021,WangZariphopoulouZhou2020}.
In this section, we use these results only in the LQ case.
%Let $V=V_T^\lambda(t,x)$ be the value function from time $t$ and state $x$ to terminal time $T$.
%If $V$ is smooth enough, define
%\begin{equation}
%    \begin{aligned}
%    \Psi_V(t,x,u) =&\ell_0(x,u) +\langle V_x(t,x),Ax+Bu+b\rangle\\
%    &+\frac12\tr\!\left[ (Cx+Du+\sigma)(Cx+Du+\sigma)^\top V_{xx}(t,x) \right].
%    \end{aligned}
%\end{equation}
% The dynamic programming principle gives the following exploratory HJB equation, of the same form as in \cite{TangZhangZhou2021}:
%\begin{equation}
%    \begin{aligned}
%    0&=\partial_t V(t,x) +\inf_{\substack{p\ge0,\ \int_{\dbR^m}p(u)du=1\\
%    \int_{\dbR^m}|u|^2p(u)du<\infty,\ \int_{\dbR^m}|p(u)\log p(u)|du<\infty}} \int_{\dbR^m} \left[ %\Psi_V(t,x,u)+\lambda\log p(u) \right]p(u)du.
%    \end{aligned}
%\end{equation}
%Thus, after fixing $(t,x)$, the optimal feedback density is obtained by minimizing under the constraints $\int_{\dbR^m}p(u)du=1$ and $p(u)\ge0$, namely
%\begin{equation}
%    \label{Gibbs-feedback}
%    p_T^{\lambda,*}(u\mid t,x) = \frac{ \exp\!\left[-\lambda^{-1}\Psi_{V_T^\lambda}(t,x,u)\right] }{ \int_{\dbR^m}\exp\!\left[-\lambda^{-1}\Psi_{V_T^\lambda}(t,x,v)\right]dv }.
%\end{equation}
%When the normalizing constant is finite, \eqref{Gibbs-feedback} is the unique minimizing density of the %pointwise problem.
Consider the following Riccati equation:
\begin{equation}
    \label{Riccati-T}
    \left\{
    \begin{aligned}
    &\dot P_T(t)+P_T(t)A+A^\top P_T(t)+C^\top P_T(t)C+Q-\big(P_T(t)B+C^\top P_T(t)D+S^\top\big)\\
    &\quad \cdot\big(R+D^\top P_T(t)D\big)^{-1}\big(B^\top P_T(t)+D^\top P_T(t)C+S\big)=0, \qquad t\in[0,T],\\
    &P_T(T)=0.
    \end{aligned}
    \right.
\end{equation}
A solution $P_T\in C([0,T];\dbS^n)$ of \eqref{Riccati-T} is called \emph{strongly regular} if there exists a constant $\delta_T>0$ such that
\begin{equation*}
    R+D^\top P_T(t)D\ge\delta_T I_m, \qquad t\in[0,T].
\end{equation*}
The next proposition gives the Gaussian feedback law and the
linear terminal-value equation for $\varphi_T$.
Related results and proof details can be found in \cite{GiegrichReisingerZhang2024,Sun-Li-Yong2016,TangZhangZhou2021,WangZariphopoulouZhou2020}.

\begin{proposition}
    \label{prop:exploratory-solution}
    Let $P_T$ be a strongly regular solution of the Riccati equation \eqref{Riccati-T}.
    Define
    \begin{equation*}
    \begin{aligned}
    &M_T(t)=R+D^\top P_T(t)D,\q
    N_T(t)=B^\top P_T(t)+D^\top P_T(t)C+S,\q
    K_T(t)=M_T(t)^{-1}N_T(t).
    \end{aligned}
\end{equation*}
    Then the following linear terminal-value problem admits a unique solution $\varphi_T$:
    \begin{equation}\label{linear-terminal-equation}
    \left\{
    \begin{aligned}
    &\dot\varphi_T(t) +\big(A-BK_T(t)\big)^\top\varphi_T(t) +\big(C-DK_T(t)\big)^\top P_T(t)\sigma\\
    &\quad-K_T(t)^\top r+P_T(t)b+q=0, \qquad t\in[0,T],\\
    &\varphi_T(T)=0.
    \end{aligned}
    \right.
\end{equation}
    Set
    \begin{equation*}
    \begin{aligned}
    d_T(t)=B^\top\varphi_T(t)+D^\top P_T(t)\sigma+r,\qq
    \ell_T(t)=M_T(t)^{-1}d_T(t),\qq
    \Lambda_T^\lambda(t)=\frac{\lambda}{2}M_T(t)^{-1}.
    \end{aligned}
\end{equation*}
    Then
    %the value function of \ref{prob:exploratory-finite} has the quadratic form
    %\begin{equation}
   % V_T^\lambda(x) = x^\top P_T(0)x+2\varphi_T(0)^\top x+\eta_T^\lambda(0),
%\end{equation}
 %   where $\eta_T^\lambda$ is uniquely determined by the remaining constant-order equation and $\eta_T^\lambda(T)=0$.
    the optimal feedback control distribution is given by
    \begin{equation}
    \label{optimal-feedback}
        \pi_T^{\lambda,*}(du\mid t,x)
    =\cNcal\big(-K_T(t)x-\ell_T(t),\Lambda_T^\lambda(t)\big)(du).
    \end{equation}
\end{proposition}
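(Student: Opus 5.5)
Existence and uniqueness of $\varphi_T$ come first and are immediate. Since $P_T$ is strongly regular and continuous, $M_T(t)^{-1}$ is continuous and bounded on $[0,T]$, so $K_T$ is continuous. Then \eqref{linear-terminal-equation} is a linear ODE with continuous coefficients and a continuous inhomogeneous term, and it has a unique solution on $[0,T]$. Hence $d_T$, $\ell_T$ and $\Lambda_T^\lambda=\frac{\lambda}{2}M_T^{-1}>0$ are well defined and continuous. For the optimality of \eqref{optimal-feedback} I would use a verification argument (completion of squares) rather than the full exploratory HJB theory. The candidate value function is
\[
V(t,x)=x^\top P_T(t)x+2\varphi_T(t)^\top x+\eta(t),
\]
where $\eta$ solves $\dot\eta+\sigma^\top P_T\sigma+2\varphi_T^\top b-d_T^\top M_T^{-1}d_T+\frac{\lambda}{2}\log\big((\lambda\pi)^{m}\det M_T^{-1}\big)=0$ with $\eta(T)=0$. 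The constant will be fixed by the entropy computation below.

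The core step is an It\^o computation. For any $\pi\in\cA_T^\lambda(x)$, apply It\^o's formula to $V(t,X^\pi(t))$ on $[0,T]$. The state equation \eqref{state} is linear with square-integrable coefficients by (iii), so the stochastic integrals are true martingales and drop out in expectation. Add the running cost, and use $\int uu^\top\pi_t(du)=\bar u\bar u^\top+\Lambda$ to rewrite all $u$-dependent terms as integrals against $\pi_t$. The Riccati equation \eqref{Riccati-T} and \eqref{linear-terminal-equation} are built so that the $x$-quadratic and $x$-linear terms cancel. What remains, writing $X=X^\pi(t)$, is
\[
J_T^\lambda(x;\pi)=V(0,x)+\dbE\int_0^T\Big[\int_{\dbR^m}\big(u+K_TX+\ell_T\big)^\top M_T\big(u+K_TX+\ell_T\big)\pi_t(du)+\lambda\int p_t\log p_t\,du-c_T(t)\Big]dt,
\]
where $c_T(t)$ is the same deterministic constant subtracted in the $\eta$-equation. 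I will check the cancellations in this order: quadratic in $x$ via \eqref{Riccati-T}, cross terms in $x$ via the definitions of $K_T$ and $\varphi_T$, then the constants. Note that the diffusion term $D\Lambda^{1/2}d\widetilde W$ contributes $\tr(D^\top P_T D\Lambda)$, which combines with $\tr(R\Lambda)$ to give $\tr(M_T\Lambda)$. This is exactly the covariance part of the quadratic form above.

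The main obstacle is the pointwise minimization over densities. For fixed $(t,X)$, I need to minimize $\int (u-m)^\top M(u-m)p(u)\,du+\lambda\int p\log p$ over densities, where $m=-K_TX-\ell_T$ and $M=M_T(t)>0$. Let $g$ be the density of $\cNcal(m,\frac{\lambda}{2}M^{-1})$, so that $\log g=-\lambda^{-1}(u-m)^\top M(u-m)-\log Z$ with $Z=(\lambda\pi)^{m/2}\det M^{-1/2}$. The integrand then equals $\lambda\,\mathrm{KL}(p\|g)-\lambda\log Z$. This is minimized exactly when $p=g$, with minimum $-\lambda\log Z$, and uniqueness follows from $\mathrm{KL}=0$ iff $p=g$ a.e. I take $c_T(t):=-\lambda\log Z$, which matches the $\eta$-equation. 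Integrability of the KL term uses (iii) and (iv).

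It remains to show that the feedback \eqref{optimal-feedback} is admissible. The closed-loop state equation is a linear SDE with bounded continuous coefficients ($\bar u=-K_TX-\ell_T$, $\Lambda=\Lambda_T^\lambda$ deterministic), so it is well posed and square integrable. The Gaussian entropy is deterministic and bounded, and progressive measurability is inherited from $X$. Hence the bracket vanishes for this $\pi$ and is nonnegative for every other admissible $\pi$. This proves optimality, with $V_T^\lambda(x)=V(0,x)$, and also gives uniqueness a.e.
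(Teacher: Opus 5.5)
Your proof is correct and follows essentially the route the paper defers to in its cited references: the exploratory HJB with the quadratic ansatz, whose pointwise infimum over densities is attained by the Gibbs (here Gaussian) density, which you carry out directly as a self-contained completion-of-squares verification using $\mathrm{KL}(p_t\|g_t)\ge 0$. The only slip is a sign in your $\eta$-equation: the constant part of the It\^o drift must equal $-c_T(t)=\lambda\log Z$, so the equation should read $\dot\eta+\sigma^\top P_T\sigma+2\varphi_T^\top b-d_T^\top M_T^{-1}d_T-\frac{\lambda}{2}\log\big((\lambda\pi)^m\det M_T^{-1}\big)=0$; this changes only the value $V(0,x)$, not $\varphi_T$ or the feedback law.
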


\begin{remark}
   % By the Gaussian feedback law \eqref{optimal-feedback}, the  mean of optimal feedback $-K_T(t)x-\ell_T(t)$ is %independent of $\lambda$, while the exploration covariance $\Lambda_T^\lambda(t)=\frac{\lambda}{2}M_T(t)^{-1}$ is %proportional to $\lambda$.
   By \eqref{optimal-feedback}, the mean $-K_T(t)x-\ell_T(t)$ of the optimal feedback distribution is independent of $\lambda$, while its covariance $\Lambda_T^\lambda(t)=\frac{\lambda}{2}M_T(t)^{-1}$ is proportional to $\lambda$.
\end{remark}

\subsection{Infinite-Horizon Problem}
\label{sec:infinite-horizon-problem}

We  recall the stability of stochastic linear systems.
Consider the homogeneous uncontrolled system
\begin{equation*}\left\{\begin{aligned}
    &dX(t)=AX(t)dt+CX(t)dW(t),\qquad t\ge0,\\
    &X(0)=x.
    \end{aligned}\right.
\end{equation*}
For any initial value $x\in\dbR^n$, denote its solution by $X(\cdot;x)$.

\begin{definition}
(i) The system $[A,C]$ is called $L^2$-stable if, for every $x\in\dbR^n$,
\begin{equation*}
    \dbE\int_0^\infty |X(t;x)|^2dt<\infty.
\end{equation*}
 (ii) The system $[A,C]$ is called mean-square exponentially stable if there exists $\beta>0$ such that, for every $x\in\dbR^n$,
\[
    \sup_{t\ge0} e^{\beta t}\dbE|X(t;x)|^2<\infty.
\]
\end{definition}

%Let $\Phi(\cdot)$ be the solution of the matrix-valued stochastic differential equation
%\begin{equation}\label{fundamental-matrix}
 %   \left\{
%    \begin{aligned}
 %   &d\Phi(t)=A\Phi(t)dt+C\Phi(t)dW(t),\qquad t\ge0,\\
%    &\Phi(0)=I_n .
%    \end{aligned}
%    \right.
%\end{equation}
%The $L^2$-stability of stochastic linear systems can be characterized by the following equivalent conditions; see %\cite{SunYong2018}.

%\begin{lemma}
%\label{lem:stability-equivalence}
%Let $\Phi(t)$ solve \eqref{fundamental-matrix}. The following conditions are equivalent:
%\begin{enumerate}[(i)]
%    \item $[A,C]$ is mean-square exponentially stable;
%    \item there exist constants $\alpha,\beta>0$ such that $\dbE|\Phi(t)|^2\le \alpha e^{-\beta t}$, $t\ge0$;
%    \item $\displaystyle \int_0^\infty\dbE|\Phi(t)|^2dt<\infty$;
%    \item $[A,C]$ is $L^2$-stable;
 %   \item there exists $P\in\dbS^n_{++}$ such that $PA+A^\top P+C^\top PC<0$.
%\end{enumerate}
%\end{lemma}
The following characterization will be used below;
	see Sun--Yong \cite{SunYong2018}.
	\begin{lemma}\label{lem:stability-equivalence}
		The following conditions are equivalent:
		\begin{enumerate}[(i)]
			\item $[A,C]$ is mean-square exponentially stable;
			\item $[A,C]$ is $L^2$-stable;
			\item there exists $P\in\dbS^n_{++}$ such that
			$PA+A^\top P+C^\top PC<0$.
		\end{enumerate}
\end{lemma}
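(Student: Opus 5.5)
The plan is to reduce everything to the deterministic second-moment matrix $\Sigma(t)=\dbE[X(t)X(t)^\top]$. By It\^o's formula,
\begin{equation*}
\dot\Sigma=A\Sigma+\Sigma A^\top+C\Sigma C^\top=:\mathcal L(\Sigma),
\end{equation*}
so that $\dbE|X(t;x)|^2=\tr\Sigma(t)$ with $\Sigma(0)=xx^\top$. The operator $\mathcal L$ is linear on $\dbS^n$. Its adjoint with respect to the Frobenius inner product is $\mathcal L^*(P)=PA+A^\top P+C^\top PC$. I will use two facts about these maps:
\begin{itemize}
\item both $e^{t\mathcal L}$ and $e^{t\mathcal L^*}$ preserve the cone $\dbS^n_+$;
\item $\tr\big(P\Sigma(t)\big)=\tr\big(e^{t\mathcal L^*}(P)\,xx^\top\big)$.
\end{itemize}
I will prove the cycle (i)$\Rightarrow$(ii)$\Rightarrow$(iii)$\Rightarrow$(i).

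\textbf{(i)$\Rightarrow$(ii).} This is immediate. Integrating $\dbE|X(t;x)|^2\le c_xe^{-\beta t}$ over $[0,\infty)$ gives a finite integral.

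\textbf{(ii)$\Rightarrow$(iii).} Define
\begin{equation*}
P=\int_0^\infty e^{t\mathcal L^*}(I_n)\,dt.
\end{equation*}
This integral converges. Indeed, for every $x$ we have $x^\top e^{t\mathcal L^*}(I_n)x=\dbE|X(t;x)|^2$, and the quantities $e^{t\mathcal L^*}(I_n)$ are positive semidefinite. Hence their norms are bounded by their traces, which equal $\sum_i\dbE|X(t;e_i)|^2$. By (ii) these traces are integrable. Positivity of $P$ follows from
\begin{equation*}
x^\top Px=\dbE\int_0^\infty|X(t;x)|^2dt\ge\int_0^{1}\dbE|X(t;x)|^2dt>0
\end{equation*}
for $x\neq0$, where the last inequality uses continuity of $t\mapsto\dbE|X(t;x)|^2$ at $t=0$. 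Finally, $\frac{d}{dt}e^{t\mathcal L^*}(I_n)=\mathcal L^*\big(e^{t\mathcal L^*}(I_n)\big)$. Integrating, and using that $e^{t\mathcal L^*}(I_n)\to0$ along the flow, gives $\mathcal L^*(P)=-I_n<0$.

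\textbf{(iii)$\Rightarrow$(i).} This is a Lyapunov argument. Given $P>0$ with $\mathcal L^*(P)<0$, choose $\beta>0$ with $\mathcal L^*(P)\le-\beta P$. It\^o's formula gives
\begin{equation*}
\frac{d}{dt}\dbE[X^\top PX]=\dbE[X^\top\mathcal L^*(P)X]\le-\beta\,\dbE[X^\top PX].
\end{equation*}
By Gronwall, $\dbE[X^\top PX]\le e^{-\beta t}x^\top Px$. Since $P\ge\lambda_{\min}(P)I$, this yields the exponential bound in (i).

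\textbf{Main obstacle.} The delicate point is justifying that $e^{t\mathcal L^*}(I_n)\to0$ in (ii)$\Rightarrow$(iii), which is needed to drop the boundary term. Integrability alone does not force pointwise decay, so I would argue via the spectrum instead. Since $e^{t\mathcal L^*}$ is positive on the cone $\dbS^n_+$, a Perron--Frobenius/Krein--Rutman argument shows that its spectral abscissa $s$ is attained at an eigenvector $P_0\ge0$, $P_0\ne0$. Then
\begin{equation*}
\int_0^\infty e^{st}\,\tr(P_0)\,dt=\int_0^\infty\tr\big(e^{t\mathcal L^*}(P_0)\big)\,dt\le\|P_0\|\sum_i\int_0^\infty\dbE|X(t;e_i)|^2dt<\infty,
\end{equation*}
which forces $s<0$. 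Thus $\|e^{t\mathcal L^*}\|$ decays exponentially. This also gives (ii)$\Rightarrow$(i) directly and removes the boundary-term issue.
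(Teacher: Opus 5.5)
Your proof is correct. The paper does not prove this lemma; it quotes it from Sun--Yong \cite{SunYong2018}. The standard argument runs along your lines: (i)$\Rightarrow$(ii) is trivial, (ii)$\Rightarrow$(iii) uses the Gramian $P=\int_0^\infty e^{t\mathcal L^*}(I_n)\,dt$ (probabilistically, $P=\dbE\int_0^\infty\Phi(t)^\top\Phi(t)\,dt$ for the fundamental matrix $\Phi$), and (iii)$\Rightarrow$(i) follows from It\^o's formula for $X^\top PX$ together with Gronwall.

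The only place you diverge is the boundary term, and there the Krein--Rutman detour is valid but unnecessary. Integrating $\frac{d}{dt}e^{t\mathcal L^*}(I_n)=\mathcal L^*\bigl(e^{t\mathcal L^*}(I_n)\bigr)$ gives $e^{T\mathcal L^*}(I_n)=I_n+\mathcal L^*\bigl(\int_0^Te^{t\mathcal L^*}(I_n)\,dt\bigr)$. The right-hand side converges to $I_n+\mathcal L^*(P)$ as $T\to\infty$, so $e^{T\mathcal L^*}(I_n)$ has a positive semidefinite limit. Its trace is integrable on $[0,\infty)$, so that limit has zero trace and is therefore $0$; this gives $\mathcal L^*(P)=-I_n$.

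Alternatively, linearity gives $e^{t\mathcal L^*}(P)=\int_t^\infty e^{s\mathcal L^*}(I_n)\,ds$, and you differentiate at $t=0$. This is the semigroup form of the Markov-property identity $\dbE[\Phi(t)^\top P\Phi(t)]=\dbE\int_t^\infty\Phi(s)^\top\Phi(s)\,ds$. Your remark that integrability alone does not force pointwise decay is true in general, but here the limit exists for free from the linear identity.

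The two routes trade off as follows:
\begin{itemize}
\item Your spectral route proves (ii)$\Rightarrow$(i) directly, with every decay rate $\beta<-s$ admissible, where $s$ is the spectral abscissa of $\mathcal L^*$. Its cost is importing the finite-dimensional Perron--Frobenius theorem for cone-preserving semigroups, plus a small-$t$ argument to pass from an eigenvector of $e^{t\mathcal L^*}$ to one of $\mathcal L^*$.
\item The elementary route is self-contained, and the cycle (ii)$\Rightarrow$(iii)$\Rightarrow$(i) already delivers exponential decay.
\end{itemize}
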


Moreover, from Sun--Wang--Yong \cite[Corollary~3.1]{SunWangYong2022}, we have the following result.

\begin{corollary}
\label{cor:Hurwitz}
If $[A,C]$ is $L^2$-stable, then there exist constants $\alpha,\beta>0$ such that $|e^{At}|\le \alpha e^{-\beta t/2}$, $t\ge0$.
\end{corollary}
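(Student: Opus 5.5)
The plan is to take a direction $x\in\dbR^n$ and compare the deterministic mean flow $e^{At}x$ with the second moments of the stochastic solution $X(t;x)$, using the Lyapunov characterization in Lemma~\ref{lem:stability-equivalence}. Since $[A,C]$ is $L^2$-stable, Lemma~\ref{lem:stability-equivalence} gives mean-square exponential stability, and uniformity over initial states comes for free by linearity. Indeed, applying the definition to the basis vectors $e_1,\dots,e_n$ and using $X(t;x)=\sum_i x_iX(t;e_i)$ yields constants $\alpha,\beta>0$ with
\[
\dbE|X(t;x)|^2\le \alpha e^{-\beta t}|x|^2,\qquad t\ge0,\ x\in\dbR^n .
\]

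Next I take expectations in the state equation. The stochastic integral $\int_0^t CX(s)\,dW(s)$ is a true martingale because $\dbE\int_0^t|X(s)|^2ds<\infty$. Hence $m(t):=\dbE X(t;x)$ solves $\dot m=Am$ with $m(0)=x$, so $m(t)=e^{At}x$. Jensen's inequality then gives
\[
|e^{At}x|^2=|\dbE X(t;x)|^2\le \dbE|X(t;x)|^2\le \alpha e^{-\beta t}|x|^2 .
\]
Taking the supremum over $|x|=1$ gives $\|e^{At}\|\le \sqrt\alpha\, e^{-\beta t/2}$. Because all norms on $\dbR^{n\times n}$ are equivalent, this gives the Frobenius bound $|e^{At}|\le \sqrt n\sqrt\alpha\, e^{-\beta t/2}$. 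After renaming the constant $\alpha$, this is exactly the claimed estimate.

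As an alternative route avoiding probability, I could use item (iii) of Lemma~\ref{lem:stability-equivalence}. Let $P>0$ satisfy $PA+A^\top P+C^\top PC<0$. Since $C^\top PC\ge0$, this gives $PA+A^\top P<-\varepsilon P$ for some $\varepsilon>0$, so $V(y)=y^\top Py$ satisfies $\frac{d}{dt}V(e^{At}x)\le -\varepsilon V(e^{At}x)$. The bound then follows with $\beta=\varepsilon$ and $\alpha$ depending on the condition number of $P$.

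None of the steps is a real obstacle. The only points needing care are the uniformity of the mean-square decay constant in $x$, which linearity handles, and justifying that the Itô integral has zero mean, which follows from the square-integrability given by $L^2$-stability. I would present the Lyapunov argument as the main proof, since it is cleanest.
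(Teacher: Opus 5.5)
Your proof is correct. The paper does not prove this corollary itself; it imports it from Sun--Wang--Yong. There it is obtained exactly as in your first route: $\dbE\Phi(t)=e^{At}$ for the fundamental matrix $\Phi$ of $[A,C]$, and Jensen's inequality against the mean-square decay $\dbE|\Phi(t)|^2\le\alpha e^{-\beta t}$ gives the bound. This is why the statement carries the exponent $\beta t/2$, and your basis-vector linearity argument is just the vector form of working with $\Phi$ directly.

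The Lyapunov route you say you would make the main proof is genuinely different and equally valid. Strict negativity gives $PA+A^\top P+C^\top PC\le -cI$ for some $c>0$. Dropping $C^\top PC\ge0$ then yields $PA+A^\top P\le -cI\le -\varepsilon P$ with $\varepsilon=c/\lambda_{\max}(P)$; you should state this one line explicitly rather than assert it. It follows that $|e^{At}|\le\sqrt{n\lambda_{\max}(P)/\lambda_{\min}(P)}\,e^{-\varepsilon t/2}$.

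The two routes buy different things. The Lyapunov version is purely deterministic and shows that the matrix certifying $L^2$-stability of $[A,C]$ is already a Lyapunov matrix for $A$ alone, so the corollary amounts to discarding a nonnegative diffusion term. The probabilistic version instead identifies the decay rate of $e^{At}$ with half the mean-square decay rate of the stochastic system itself, which is the form in which the constants appear in the paper.
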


Now we consider the following controlled homogeneous linear system:
\begin{equation}\label{state-homo}
	\left\{
	\begin{aligned}
		&dX(t)=\bigl(AX(t)+Bu(t)\bigr)dt +\bigl(CX(t)+Du(t)\bigr)dW(t),\qquad t\in[0,\infty),\\
		&X(0)=x.
	\end{aligned}
	\right.
\end{equation}
For the infinite-horizon problem, the admissible control set is
\begin{equation}\label{admissible}
	\mathcal U_{\mathrm{ad}}(x)
	=\left\{u\in L^2_{\dbF}(0,\infty;\dbR^m):
	X^{x,u}\in L^2_{\dbF}(0,\infty;\dbR^n)\right\}.
\end{equation}
The cost functional and the value function are given by
\begin{equation}\label{infinite-value}
\begin{aligned}
	&J_\infty^0(x;u)=\dbE\int_0^\infty
	[X(t)^\top QX(t)+2u(t)^\top SX(t)+u(t)^\top Ru(t)]dt,\\
	&V_\infty^0(x)=\inf_{u\in\mathcal U_{\mathrm{ad}}(x)}J_\infty^0(x;u).
\end{aligned}
\end{equation}

The stability of the uncontrolled system also gives the following estimate uniformly over the time horizon.
The proof can be found in Sun--Yong \cite{SunYong2024ZeroSum}.

\begin{lemma}
	\label{lem:state-estimate}
	 If $[A,C]$ is $L^2$-stable, then for
	 any $0\le t<T$, $x\in\dbR^n$, and $u\in L^2_{\dbF}(t,T;\dbR^m)$, the corresponding homogeneous controlled state satisfies
	\begin{equation*}
		\dbE\int_t^T|X(s)|^2ds+\dbE|X(T)|^2 \le \cC\left(|x|^2+\dbE\int_t^T|u(s)|^2ds\right),
	\end{equation*}
	where $\cC>0$ is independent of $t$ and $T$.
\end{lemma}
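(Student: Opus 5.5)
We have to prove Lemma~\ref{lem:state-estimate}: for $X$ solving \eqref{state-homo} on $[t,T]$ with $X(t)=x$, the quantity $\dbE\int_t^T|X(s)|^2ds+\dbE|X(T)|^2$ is bounded by $\cC\bigl(|x|^2+\dbE\int_t^T|u(s)|^2ds\bigr)$, with $\cC$ independent of $t$ and $T$. The plan is a Lyapunov argument. By Lemma~\ref{lem:stability-equivalence}(iii) there is $P\in\dbS^n_{++}$ with $PA+A^\top P+C^\top PC<0$. Choose $\varepsilon>0$ such that
\begin{equation*}
PA+A^\top P+C^\top PC\le -2\varepsilon I_n .
\end{equation*}
We will apply It\^o's formula to $X(s)^\top PX(s)$ on $[t,r]$ for $r\in[t,T]$. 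For $u\in L^2_{\dbF}$ the state has finite second moments on the finite interval, so the stochastic integral is a true martingale and drops out after taking expectations.

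Computing the drift gives
\begin{equation*}
X^\top(PA+A^\top P+C^\top PC)X+2X^\top(PB+C^\top PD)u+u^\top D^\top PDu .
\end{equation*}
Young's inequality bounds the cross term by $\varepsilon|X|^2+\varepsilon^{-1}\|PB+C^\top PD\|^2|u|^2$. Hence, for every $r\in[t,T]$,
\begin{equation*}
\dbE[X(r)^\top PX(r)]+\varepsilon\,\dbE\int_t^r|X(s)|^2ds\le x^\top Px+c_1\,\dbE\int_t^r|u(s)|^2ds,
\end{equation*}
where $c_1=\varepsilon^{-1}\|PB+C^\top PD\|^2+\|D^\top PD\|$. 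The constant $c_1$ depends only on the coefficients and $P$, not on $t$ or $T$.

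Now take $r=T$. Since $\lambda_{\min}(P)|y|^2\le y^\top Py\le\lambda_{\max}(P)|y|^2$, both terms on the left are controlled. Dividing by $\min\{\lambda_{\min}(P),\varepsilon\}$ yields the claim with
\begin{equation*}
\cC=\frac{\max\{\lambda_{\max}(P),c_1\}}{\min\{\lambda_{\min}(P),\varepsilon\}}.
\end{equation*}
The only point requiring care is justifying that the local martingale term has zero expectation. This follows from the standard $L^2$ estimate for linear SDEs on finite intervals, with a localization/stopping-time argument plus Fatou if desired. There is no real obstacle: horizon-independence comes entirely from the fact that $P$ and $\varepsilon$ are fixed by Lemma~\ref{lem:stability-equivalence} and do not depend on $t$ or $T$.
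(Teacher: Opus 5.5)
Your Lyapunov--It\^o argument is correct: the constant depends only on the coefficients, $P$ and $\varepsilon$, hence not on $t$ or $T$. It is the standard proof of this estimate; the paper itself gives no argument and simply defers to Sun--Yong. The one step to state precisely is the martingale step: finite second moments of $X$ alone do not make $\int X^\top P(CX+Du)\,dW$ a true martingale. Use the localization you mention: stop at $\tau_k$, then apply Fatou to $\dbE[X(T)^\top PX(T)]$ and monotone convergence to $\dbE\int_t^T|X|^2ds$. Alternatively, combine $\dbE\sup_{s}|X(s)|^2<\infty$ with the Burkholder--Davis--Gundy inequality.
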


\begin{corollary}
	\label{cor:exponential-stability}
	If $[A,C]$ is $L^2$-stable, then there exist constants $\cC,\gamma>0$ such that, for every $0\le t\le s$ and $x\in\dbR^n$, the uncontrolled homogeneous state starting from $x$ at time $t$ satisfies
	\[
	\dbE|X(s;t,x)|^2\le \cC e^{-2\gamma(s-t)}|x|^2.
	\]
\end{corollary}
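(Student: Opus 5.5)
The statement to prove is Corollary~\ref{cor:exponential-stability}: under $L^2$-stability the uncontrolled homogeneous state decays exponentially in mean square, uniformly in the starting time. The plan is to reduce it to Lemma~\ref{lem:stability-equivalence}, using either the Lyapunov characterization (iii) or the mean-square exponential stability (i). The cleanest route uses (iii). Since $[A,C]$ is $L^2$-stable, Lemma~\ref{lem:stability-equivalence} supplies $P\in\dbS^n_{++}$ with $PA+A^\top P+C^\top PC<0$. By compactness of the unit sphere there is then $c>0$ with
\begin{equation*}
PA+A^\top P+C^\top PC\le -cI_n .
\end{equation*}
Let $\lambda_{\min},\lambda_{\max}>0$ be the extreme eigenvalues of $P$. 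Then
\begin{equation*}
PA+A^\top P+C^\top PC\le -\frac{c}{\lambda_{\max}}P=:-2\gamma P .
\end{equation*}

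Fix $t\ge0$ and $x$, and let $X(s)=X(s;t,x)$ for $s\ge t$. By It\^o's formula,
\begin{equation*}
d\big(X^\top PX\big)=X^\top\big(PA+A^\top P+C^\top PC\big)X\,ds+2X^\top PCX\,dW .
\end{equation*}
The solution has finite second moments on compact intervals, so the stochastic integral is a true martingale; alternatively one can localize with stopping times and pass to the limit by Fatou or monotone convergence. Taking expectations gives, for $f(s):=\dbE[X(s)^\top PX(s)]$,
\begin{equation*}
f(s)-f(r)\le -2\gamma\int_r^s f(\tau)\,d\tau,\qquad t\le r\le s .
\end{equation*}
Here $f$ is absolutely continuous with $f'\le -2\gamma f$. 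Gronwall's inequality then yields $f(s)\le e^{-2\gamma(s-t)}f(t)=e^{-2\gamma(s-t)}x^\top Px$. Using the eigenvalue bounds, we conclude
\begin{equation*}
\dbE|X(s)|^2\le \frac{\lambda_{\max}}{\lambda_{\min}}\,e^{-2\gamma(s-t)}|x|^2 ,
\end{equation*}
so the corollary holds with $\cC=\lambda_{\max}/\lambda_{\min}$.

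The independence of $\cC$ and $\gamma$ from $t$ is automatic. The coefficients are time-invariant and $W$ has stationary independent increments, so $X(s;t,x)$ has the same law as $X(s-t;0,x)$; the Lyapunov argument never uses $t$ in any case.

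There is no real obstacle; the only point needing care is justifying the martingale property of the It\^o integral, which the localization handles. An alternative route is to invoke item (i) of Lemma~\ref{lem:stability-equivalence} together with linearity in $x$, applied to the basis vectors, to obtain a uniform constant, and then to use the time shift described above.
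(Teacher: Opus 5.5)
Your proof is correct and is essentially the argument the paper relies on. The paper gives no separate proof: it treats the estimate as an immediate consequence of Lemma~\ref{lem:stability-equivalence} and time-invariance, and your Lyapunov--It\^o--Gronwall device (with $H$ from \eqref{Lyapunov-equation}) is exactly what it uses in Lemma~\ref{lem:moment-bound}. One small precision: the integrand $2X^\top PCX$ is quadratic in $X$, so finite second moments alone do not make the stochastic integral a true martingale. You need finite fourth moments, which hold for a linear SDE with deterministic initial datum, or a BDG bound using $\dbE\sup|X|^2<\infty$; your localization alternative already covers this.
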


We denote the system  \eqref{state-homo} by $[A,C;B,D]$, and recall the following notion.
\begin{definition}
	The system $[A,C;B,D]$ is called $L^2$-stabilizable if there exists a matrix $\Theta\in\dbR^{m\times n}$ such that the closed-loop system $[A+B\Theta,C+D\Theta]$ is $L^2$-stable.
	Such a matrix $\Theta$ is called a stabilizer of $[A,C;B,D]$.
\end{definition}

For $T>0$ and $u\in L^2_{\dbF}(0,T;\dbR^m)$, let $X$ be the solution of the homogeneous finite-horizon state equation
\begin{equation*}
    \left\{
    \begin{aligned}
    &dX(t)=\bigl(AX(t)+Bu(t)\bigr)dt +\bigl(CX(t)+Du(t)\bigr)dW(t),\qquad t\in[0,T],\\
    &X(0)=0,
    \end{aligned}
    \right.
\end{equation*}
and define
\begin{equation*}
    J_T^0(0;u) = \dbE\int_0^T \left[ X(t)^\top QX(t) +2X(t)^\top S^\top u(t) +u(t)^\top Ru(t) \right]dt.
\end{equation*}

We impose the following assumptions.
\begin{taggedassumption}{(H1)}\label{ass:L2-stabilizable} The  system $[A,C;B,D]$ is $L^2$-stabilizable.
\end{taggedassumption}
\begin{taggedassumption}{(H2)}\label{ass:JC} There exists a constant $\delta>0$, independent of $T$ and $u$, such that
	\begin{equation}\label{JC}
		J_T^0(0;u)\geq\delta\dbE\int_0^T
		\bigl(|X^{0,u}(t)|^2+|u(t)|^2\bigr)\,dt,
		\qquad \forall T>0,\quad\forall u\in L^2_{\dbF}(0,T;\dbR^m).
	\end{equation}
\end{taggedassumption}

We refer to Assumption~\ref{ass:JC} as the \emph{joint coercivity} ($JC$) condition. This terminology is motivated by the following \emph{coercivity} condition, introduced by Damm and Jacob \cite{Damm2020}: There exists $\delta>0$ such that
\begin{equation}\label{C-infinite}
    J_\infty^0(0;u)\geq\delta\dbE\int_0^\infty|X^{0,u}(t)|^2\,dt,
    \qquad\forall u\in\mathcal U_{\mathrm{ad}}(0).
\end{equation}
A related condition commonly imposed in the literature is the \emph{uniform convexity} ($UC$) of the cost functional with respect to the control; see \cite{Sun-Li-Yong2016}.
The homogeneous cost functional satisfies the uniform convexity condition,   if for any $T>0$, there exists a constant $\delta_T>0$, independent of  $u$, such that
	\begin{equation}\label{UC}
		J_T^0(0;u)\ge\delta_T\dbE\int_0^T|u(t)|^2dt,
		\qquad u\in L^2_{\dbF}(0,T;\dbR^m).
	\end{equation}

\begin{remark}\label{remark2.10}
For a finite-horizon problem with a fixed $T>0$, uniform convexity and joint coercivity are equivalent, with constants that may depend on $T$.
Indeed, for a zero initial state, by the standard estimates, we have
$$\dbE\int_0^T|X(t)|^2dt\le \cC_T \dbE\int_0^T|u(t)|^2dt,$$
for some constant  $\cC_T>0$. Then,
$$J_T^0(0;u)\ge\left( {\delta_T\over 1+\cC_T}\right)\dbE\int_0^T
		\bigl(|X^{0,u}(t)|^2+|u(t)|^2\bigr)\,dt,\quad u\in L^2_{\dbF}(0,T;\dbR^m).$$
\end{remark}

% For the infinite-horizon problem, the two conditions \eqref{JC}--\eqref{UC} are also equivalent when the system is stable.

\begin{proposition}\label{prop:JC-UC}
Suppose that the uncontrolled system $[A,C]$ is $L^2$-stable.
Then \eqref{JC} is equivalent to \eqref{UC} holding with a common
constant $\delta_T=\delta_0>0$ independent of $T$.
\end{proposition}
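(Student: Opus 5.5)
Both implications come from comparing $\dbE\int_0^T|X^{0,u}|^2dt$ with $\dbE\int_0^T|u|^2dt$, uniformly in $T$; the uniform-in-$T$ estimate comes from Lemma~\ref{lem:state-estimate}, since $[A,C]$ is $L^2$-stable.

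\emph{(JC) $\Rightarrow$ (UC) with a uniform constant.} This is immediate. If \eqref{JC} holds with constant $\delta$, then for every $T>0$ and $u\in L^2_{\dbF}(0,T;\dbR^m)$,
\begin{equation*}
J_T^0(0;u)\ge\delta\,\dbE\int_0^T\bigl(|X^{0,u}|^2+|u|^2\bigr)dt\ge\delta\,\dbE\int_0^T|u|^2dt.
\end{equation*}
So \eqref{UC} holds with $\delta_T=\delta_0:=\delta$. This direction does not need stability.

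\emph{(UC) with a uniform constant $\Rightarrow$ (JC).} Assume $J_T^0(0;u)\ge\delta_0\,\dbE\int_0^T|u|^2dt$ for all $T$ and $u$.

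First, apply Lemma~\ref{lem:state-estimate} with initial time $t=0$ and initial state $x=0$. This gives
\begin{equation*}
\dbE\int_0^T|X^{0,u}(s)|^2ds\le\cC\,\dbE\int_0^T|u(s)|^2ds,
\end{equation*}
where $\cC$ is independent of $T$.

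Second, combine this with the assumed inequality:
\begin{equation*}
\dbE\int_0^T\bigl(|X^{0,u}|^2+|u|^2\bigr)dt\le(1+\cC)\,\dbE\int_0^T|u|^2dt\le\frac{1+\cC}{\delta_0}\,J_T^0(0;u).
\end{equation*}
Hence \eqref{JC} holds with $\delta=\delta_0/(1+\cC)$. This constant is independent of $T$ and $u$ because $\cC$ and $\delta_0$ are.

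The argument mirrors Remark~\ref{remark2.10}. The only point needing care is that the state bound is uniform in the horizon. Remark~\ref{remark2.10} uses a Gronwall-type constant $\cC_T$, which may grow with $T$. Here the $L^2$-stability of $[A,C]$ removes this growth through Lemma~\ref{lem:state-estimate}, and that lemma is already established.
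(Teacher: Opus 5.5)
Your proof is correct and follows essentially the same route as the paper. The direction from \eqref{JC} to \eqref{UC} is immediate by dropping the state term, and the converse combines the horizon-uniform bound of Lemma~\ref{lem:state-estimate} (zero initial state) with the uniform convexity to obtain $\delta=\delta_0/(1+\cC)$; the paper treats the first direction as obvious and writes out only the second, exactly as you do.
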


\begin{proof}
%	We only need to show that $UC$ implies $JC$. Suppose that the $UC$ condition holds.
	It suffices to prove the reverse implication. Suppose that
	\eqref{UC} holds with a common constant $\delta_0>0$ independent of $T$.
	By Lemma~\ref{lem:state-estimate}, there exists a constant $\cC>0$, independent of $T$ and $u$, such that, for a zero initial state,
	\[
	\dbE\int_0^T|X(t)|^2dt\le\cC\dbE\int_0^T|u(t)|^2dt.
	\]
	Combining this estimate with \eqref{UC}, we obtain
	\[
	\dbE\int_0^T\bigl(|X(t)|^2+|u(t)|^2\bigr)dt
	\le(1+\cC)\dbE\int_0^T|u(t)|^2dt
	\le\frac{1+\cC}{\delta_0}J_T^0(0;u).
	\]
	Taking $\delta=\delta_0/(1+\cC)$ gives \eqref{JC}.
\end{proof}

In finite-horizon stochastic LQ theory, the standard condition requires the terminal weight matrix to be non-negative and assumes that there exists a constant $\delta_0>0$ such that
\begin{equation}\label{standard-C}
    R\ge\delta_0 I_m, \qquad Q-S^\top R^{-1}S\ge0.
\end{equation}
On every fixed finite horizon $[0,T]$, the standard condition implies the uniform convexity of the homogeneous cost functional $u\mapsto J_T^0(0;u)$, where the convexity constant may depend on $T$; this uniform convexity in turn guarantees that the corresponding Riccati equation admits a unique strongly regular solution. These results can be found in Sun--Li--Yong \cite[Proposition~3.5, Theorem~4.5 and Remark~4.6]{Sun-Li-Yong2016}. However, the standard condition does not provide a single convexity constant valid for all $T>0$ and therefore does not imply \ref{ass:JC}. The following example illustrates this point.

\begin{example}\label{example1}
The standard condition does not imply \ref{ass:JC}.
Consider the deterministic scalar system
\begin{equation}\label{example-state}
	\left\{
	\begin{aligned}
		&\dot X(t)=-X(t)+3u(t),\qquad 0\le t\le T,\\
		&X(0)=0
	\end{aligned}
	\right.
\end{equation}
with cost functional
\begin{equation*}
    J_T^0(0;u) = \int_0^T\bigl(u(t)-X(t)\bigr)^2dt.
\end{equation*}
Here $A=-1$, $B=3$, $C=D=0$, and $R=1$, $S=-1$, $Q=1$.
Thus the free system is exponentially stable and $Q-S^\top R^{-1}S=0$, so the standard condition \rf{standard-C} holds.
For each $T>0$, let $u_T(t)-X(t)=3e^{-t}$.
It follows from \eqref{example-state} that
\[
 X(t)=3\bigl(e^{2t}-e^{-t}\bigr), \qquad u_T(t)=3e^{2t}.
\]
Consequently,
\begin{equation*}
    \begin{aligned}
    \frac{J_T^0(0;u_T)}{\displaystyle\int_0^T|u_T(t)|^2dt} &=\frac{\displaystyle\int_0^T9e^{-2t}dt}{\displaystyle\int_0^T9e^{4t}dt}
    =\frac{2(1-e^{-2T})}{e^{4T}-1}\longrightarrow0, \qquad \hbox{as }T\to\infty.
    \end{aligned}
\end{equation*}
Hence no positive constant $\delta$ independent of $T$ can satisfy \ref{ass:JC}.
\end{example}

\begin{remark}
It is straightforward to show that the optimal control in Example \ref{example1} is given by \(u^*(t)=X^*(t)\), where \(X^*\) denotes the optimal state process. Note that for the initial condition \(X^*(0)=x\) with \(x\neq 0\), the optimal state process is \(X^*(t)=e^{2t}x\), \(t\in[0,T]\), which clearly fails to satisfy the turnpike property. Indeed, the standard condition \rf{standard-C} does not, in general, imply the turnpike property. In such cases, one usually needs to impose the so-called \emph{detectability condition}.
\end{remark}

The following example shows that a system may still satisfy the $JC$ condition while its uncontrolled dynamic system is unstable and its cost functional  fails to satisfy the standard condition.

\begin{example}
	Consider the system
	\[
	\left\{
	\begin{aligned}
		&\dot X(t)=X(t)+u(t),\qquad 0\le t\le T,\\
		&X(0)=0,
	\end{aligned}
	\right.
	\]
	and the cost functional
	\[
	J_T^0(0;u)=\int_0^T\bigl(6X(t)^2+6X(t)u(t)+u(t)^2\bigr)dt.
	\]
	The uncontrolled system is unstable. However, the feedback $u(t)=-2X(t)$ yields the closed-loop equation $\dot X(t)=-X(t)$, so the system is stabilizable.
	In this case, $Q=6,S=3,R=1$, and
$	Q-S^\top R^{-1}S=6-9=-3<0 ,$
	so the standard condition fails.
	For any $T>0$ and $u\in L^2(0,T)$, differentiation gives
	\[
	\frac d{dt}X(t)^2
	=2X(t)^2+2X(t)u(t)
	\ge X(t)^2-u(t)^2.
	\]
	Integrating and using $X(0)=0$, we obtain
$	\int_0^T X(t)^2dt\le\int_0^T u(t)^2dt+X(T)^2.$
	Moreover, the state equation yields
	\begin{equation*}
			J_T^0(0;u)
			=3\int_0^T\frac d{dt}X(t)^2dt+\int_0^T u(t)^2dt
			=3X(T)^2+\int_0^T u(t)^2dt
			\ge\frac12\int_0^T\bigl(X(t)^2+u(t)^2\bigr)dt.
	\end{equation*}
\end{example}

\begin{proposition}
\label{prop:strong-regularity}
Let \ref{ass:JC} hold.
Then, for every $T>0$, the Riccati equation \eqref{Riccati-T} admits a unique strongly regular solution $P_T\in C([0,T];\dbS^n)$.
Moreover,
\begin{equation}\label{prop:strong-regularity1}
    R+D^\top P_T(t)D\ge\delta I_m, \qquad 0\le t\le T.
\end{equation}
In particular, $R\geq\delta I_m$.
\end{proposition}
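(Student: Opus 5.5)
Existence and uniqueness of a strongly regular solution for each fixed $T$ will come from Sun--Li--Yong \cite[Theorem~4.5]{Sun-Li-Yong2016}. By Remark~\ref{remark2.10}, \ref{ass:JC} gives uniform convexity \eqref{UC} on $[0,T]$ with $\delta_T=\delta$, since
\[
J_T^0(0;u)\ge\delta\,\dbE\int_0^T|u|^2dt.
\]
That theorem then yields a unique strongly regular solution $P_T$ of \eqref{Riccati-T}. The real content is the $T$-independent bound \eqref{prop:strong-regularity1}.

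To get that bound, I will first show that uniform convexity is inherited by every subinterval $[t,T]$ with the same constant $\delta$. Given $v\in L^2_{\dbF}(t,T;\dbR^m)$, extend it by $u=0$ on $[0,t)$. Then the state from $X(0)=0$ stays $0$ on $[0,t]$, and on $[t,T]$ it coincides with the state started at time $t$ from $0$. Hence $J_T^0(0;u)$ equals the cost on $[t,T]$ of $v$ from $(t,0)$. Applying \ref{ass:JC} and discarding the state term gives
\[
J^0_{[t,T]}(t,0;v)\ge\delta\,\dbE\int_t^T|v|^2ds .
\]
(Filtration issues are harmless: take $v$ adapted to $\dbF$ on $[t,T]$; equivalently, use time-invariance of the coefficients and \ref{ass:JC} on the horizon $T-t$.)

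Next, I will use the standard representation of the homogeneous cost via the Riccati solution. For the state $X$ on $[t,T]$ with $X(t)=0$ and $P_T(T)=0$, It\^o's formula applied to $X^\top P_TX$, combined with \eqref{Riccati-T}, gives the completion of squares
\[
J^0_{[t,T]}(t,0;v)=\dbE\int_t^T\big(v+K_TX\big)^\top M_T\big(v+K_TX\big)\,ds .
\]
Now suppose $M_T(t_0)$ had an eigenvalue below $\delta$, with unit eigenvector $\xi$, so that $\xi^\top M_T(t_0)\xi<\delta$. I will choose the feedback-perturbed control $v=-K_TX+\xi\,\mathbf 1_{[t_0,t_0+\varepsilon]}\,\zeta$ on $[t_0,T]$, where $\zeta$ is a fixed scalar (say $1$). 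With $X(t_0)=0$, the square term vanishes outside $[t_0,t_0+\varepsilon]$, so the cost is $\int_{t_0}^{t_0+\varepsilon}\xi^\top M_T\xi\,ds$. On the other side, $\dbE\int|v|^2$ equals $\varepsilon$ plus a term of order $\varepsilon^2$ coming from $K_TX$ (since $\dbE|X|^2=O(\varepsilon)$ on the short interval, and $X$ stays small afterward because the closed-loop system is linear with bounded coefficients on the compact interval). Dividing the inequality from the previous step by $\varepsilon$ and letting $\varepsilon\to0$, using continuity of $M_T$, gives $\xi^\top M_T(t_0)\xi\ge\delta$, a contradiction. For $t_0=T$ I will use continuity, or take intervals $[t_0-\varepsilon,t_0]$ instead.

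This estimate is the main obstacle: the $O(\varepsilon^2)$ cross term and the post-$\varepsilon$ contribution of $K_TX$ must be controlled carefully, using boundedness of $K_T$ on $[0,T]$ (which follows from strong regularity). Finally, the conclusion $R\ge\delta I_m$ follows by evaluating at $t=T$, where $P_T(T)=0$ gives $M_T(T)=R$.
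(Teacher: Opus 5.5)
Your proof is correct, and it follows the paper's route for existence, uniqueness and $R\ge\delta I_m$. The paper observes that \ref{ass:JC} gives \eqref{UC} with $\delta_T\equiv\delta$. It then cites Sun--Li--Yong \cite[Theorem~4.5 and Remark~4.6]{Sun-Li-Yong2016} for \emph{both} the unique strongly regular solution and the bound \eqref{prop:strong-regularity1} with the same constant $\delta$. It reads off $R\ge\delta I_m$ from $P_T(T)=0$, exactly as you do.

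You differ in proving the horizon-independent bound yourself rather than quoting Remark~4.6. Your argument has three parts:
\begin{enumerate}[(i)]
\item zero-extension, which carries the convexity constant over to $[t,T]$;
\item the completion of squares $J^0_{[t,T]}(t,0;v)=\dbE\int_t^T(v+K_TX)^\top M_T(v+K_TX)\,ds$, the same It\^o computation the paper uses in the proof of Theorem~\ref{Thm-equiv};
\item a spike perturbation around the closed-loop feedback.
\end{enumerate}
This is longer than a citation, but it makes explicit why the constant in \eqref{prop:strong-regularity1} is exactly $\delta$. That fact is what the paper relies on later, for instance $M_\infty\ge\delta I_m$ and the uniform bounds on $M_T(t)^{-1}$.

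One claim in your spike step is wrong, though the conclusion survives. It is not true in general that $\dbE\int_{t_0}^T|v|^2ds=\varepsilon+O(\varepsilon^2)$. When $D\xi\neq0$, the forcing $D\xi\,dW$ on $[t_0,t_0+\varepsilon]$ gives $\dbE|X(t_0+\varepsilon)|^2\approx\varepsilon|D\xi|^2$. This second moment persists on $[t_0+\varepsilon,T]$, so $\dbE\int_{t_0}^T|K_TX|^2ds$ is typically of order $\varepsilon$, the same order as the main term. Your own remark that $\dbE|X|^2=O(\varepsilon)$ and stays small afterwards only yields $O(\varepsilon)$.

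So the post-spike contribution cannot be made $o(\varepsilon)$, and you should not try to control it. It has the favourable sign and can simply be dropped:
\[
\dbE\int_{t_0}^T|v|^2ds=\varepsilon-2\,\dbE\int_{t_0}^{t_0+\varepsilon}\xi^\top K_TX\,ds+\dbE\int_{t_0}^T|K_TX|^2ds\ \ge\ \varepsilon-2\,\dbE\int_{t_0}^{t_0+\varepsilon}\xi^\top K_TX\,ds.
\]
Only the cross term on the spike interval needs to be $o(\varepsilon)$, and it is. It is $O(\varepsilon^2)$ via the mean equation for $\dbE X$, or $O(\varepsilon^{3/2})$ by Cauchy--Schwarz with $\dbE|X(s)|^2\le C(s-t_0)$.

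Hence $\int_{t_0}^{t_0+\varepsilon}\xi^\top M_T\xi\,ds\ge\delta\bigl(\varepsilon-o(\varepsilon)\bigr)$. Dividing by $\varepsilon$ and using continuity of $M_T$ gives $\xi^\top M_T(t_0)\xi\ge\delta$.
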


\begin{remark}
Condition \eqref{JC} implies \eqref{UC}
with $\delta_T\equiv\delta$.
Then the above result is given by
Sun--Li--Yong \cite[Theorem~4.5 and Remark~4.6]{Sun-Li-Yong2016} immediately.
The result $R\geq\delta I_m$ is obtained by \eqref{prop:strong-regularity1} and the fact $P_T(T)=0$.
%Moreover,  it gives unique open-loop solvability of the corresponding nonhomogeneous finite-horizon stochastic LQ problem and a  state-feedback representation.
\end{remark}

\begin{proposition}\label{prop:JC-properties}
Assumption~\ref{ass:JC} implies
	\begin{equation}\label{JC-infinite}
		J_\infty^0(0;u)\geq\delta\dbE\int_0^\infty
		\bigl(|X^{0,u}(t)|^2+|u(t)|^2\bigr)\,dt,
		\qquad \forall u\in\mathcal U_{\mathrm{ad}}(0).
	\end{equation}
\end{proposition}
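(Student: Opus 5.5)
Fix $u\in\mathcal U_{\mathrm{ad}}(0)$, so that $u\in L^2_{\dbF}(0,\infty;\dbR^m)$ and $X:=X^{0,u}\in L^2_{\dbF}(0,\infty;\dbR^n)$. The plan is to apply \eqref{JC} on $[0,T]$ to the restriction $u|_{[0,T]}$ and then let $T\to\infty$.

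First, by uniqueness of solutions of the linear SDE \eqref{state-homo}, the state on $[0,T]$ driven by $u|_{[0,T]}$ from the zero initial state coincides with the restriction $X|_{[0,T]}$. Hence
\[
J_T^0(0;u|_{[0,T]})=\dbE\int_0^T\bigl[X^\top QX+2X^\top S^\top u+u^\top Ru\bigr]dt .
\]
Applying \ref{ass:JC} gives, for every $T>0$,
\[
\dbE\int_0^T\bigl[X^\top QX+2X^\top S^\top u+u^\top Ru\bigr]dt\ \ge\ \delta\,\dbE\int_0^T\bigl(|X|^2+|u|^2\bigr)dt .
\]

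Second, we pass to the limit $T\to\infty$. The integrand is bounded pointwise by
\[
\bigl(\|Q\|+\|S\|\bigr)|X|^2+\bigl(\|S\|+\|R\|\bigr)|u|^2,
\]
and this bound is integrable on $\Omega\times[0,\infty)$ because $X$ and $u$ are both in $L^2_{\dbF}(0,\infty)$. By dominated convergence, the left-hand side therefore tends to $J_\infty^0(0;u)$. The right-hand side increases monotonically to $\delta\,\dbE\int_0^\infty(|X|^2+|u|^2)\,dt$. Combining the two limits yields \eqref{JC-infinite}.

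The only delicate point is the integrability check that justifies exchanging the limit and the expectation on the cost side, since $Q$, $S$ and $R$ are indefinite. This is exactly where membership in $\mathcal U_{\mathrm{ad}}(0)$, which requires $X\in L^2$, is used; no stability of $[A,C]$ is needed.
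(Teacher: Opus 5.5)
Your proof is correct and follows the paper's route: the paper justifies this proposition in one line, by letting $T\to\infty$ in \eqref{JC} for $u\in\mathcal U_{\mathrm{ad}}(0)$. You additionally supply the details the paper leaves implicit, namely the restriction/uniqueness argument and the dominated-convergence bound, which uses $X,u\in L^2_{\dbF}(0,\infty)$ and is valid for indefinite weights.
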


%The proof of the above is clear.
This follows by letting $T\to\infty$ in \eqref{JC} for $u\in\mathcal U_{\mathrm{ad}}(0)$.
Now we return to the homogeneous infinite-horizon problem defined by \eqref{state-homo}--\eqref{admissible}--\eqref{infinite-value}, and determine $P_\infty$ and the stabilizing feedback. The theorem follows the approach of Damm--Jacob~\cite[Theorem~2]{Damm2020}, where
their diffusion coefficient does not depend on the control; we establish the result for the general case.
The proof is given in the Appendix.

Consider the algebraic Riccati equation
\begin{equation}\label{ARE}
			\begin{aligned}
				0={}&A^\top P_\infty+P_\infty A+C^\top P_\infty C+Q\\
				&-(P_\infty B+C^\top P_\infty D+S^\top)
				(R+D^\top P_\infty D)^{-1}
				(B^\top P_\infty+D^\top P_\infty C+S).
			\end{aligned}
		\end{equation}
We call $P_\infty\in\dbS^n$ a \emph{stabilizing solution} of \rf{ARE} if it satisfies this equation and the corresponding closed-loop system $[A-BK_\infty,C-DK_\infty]$ is $L^2$-stable, where \begin{equation}\label{gain}
			\begin{aligned}
				&M_\infty=R+D^\top P_\infty D>0,\q
				N_\infty=B^\top P_\infty+D^\top P_\infty C+S,
				\q K_\infty=M_\infty^{-1}N_\infty.
			\end{aligned}
		\end{equation}

\begin{theorem}\label{thm:Damm}
	Suppose that the system satisfies \ref{ass:L2-stabilizable} and that $R>0$.
	Then the following three statements are equivalent:
\begin{enumerate}[(i)]
		\item  The coercivity condition \eqref{C-infinite}
		holds for some $\delta>0$.
		\item There exists $P_\infty\in\dbS^n$ satisfying
		\rf{ARE}
		such that the system $[A-BK_\infty,C-DK_\infty]$ is mean-square exponentially stable.
 \item There exists $\delta>0$ such that
		\begin{equation}\label{JC-infinite1}
			J_\infty^0(0;u)\geq\delta\dbE\int_0^\infty
		\bigl(|X^{0,u}(t)|^2+|u(t)|^2\bigr)\,dt,
			\qquad\forall u\in\mathcal U_{\mathrm{ad}}(0).
		\end{equation}
	\end{enumerate}
	In this case, $P_\infty$ is the unique stabilizing solution of \rf{ARE}, and
	\begin{equation}\label{optimal}
		V_\infty^0(x)=x^\top P_\infty x,\qquad
		u^*(t)=-K_\infty X^*(t)
	\end{equation}
	give the optimal value and the unique optimal control for each initial state.
	The matrix $P_\infty$ may be indefinite.
\end{theorem}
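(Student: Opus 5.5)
The plan is to prove the cycle (iii)$\Rightarrow$(i)$\Rightarrow$(ii)$\Rightarrow$(iii) and read off uniqueness and the optimal control along the way. The implication (iii)$\Rightarrow$(i) is trivial, since \eqref{JC-infinite1} dominates \eqref{C-infinite}.

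For (ii)$\Rightarrow$(iii), fix $u\in\mathcal U_{\mathrm{ad}}(0)$ and apply It\^o's formula to $X(t)^\top P_\infty X(t)$ on $[0,T]$. Using \rf{ARE} and completing the square gives
\[
J_T^0(0;u)=\dbE\,X(T)^\top P_\infty X(T)+\dbE\int_0^T |M_\infty^{1/2}(u+K_\infty X)|^2dt .
\]
Since $X\in L^2_{\dbF}(0,\infty)$, a subsequence has $\dbE|X(T_k)|^2\to0$, so $J^0_\infty(0;u)=\dbE\int_0^\infty|M_\infty^{1/2}(u+K_\infty X)|^2dt$. Set $v=u+K_\infty X$. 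Then $X$ solves the closed-loop system $[A-BK_\infty,C-DK_\infty]$ driven by $v$ through $[B,D]$. Because this system is $L^2$-stable, Lemma~\ref{lem:state-estimate} gives $\dbE\int|X|^2\le\cC\,\dbE\int|v|^2$, and hence $\dbE\int|u|^2\le \cC'\dbE\int|v|^2$. Since $M_\infty>0$, this yields \eqref{JC-infinite1}. The same computation with $X(0)=x$ shows $J^0_\infty(x;u)=x^\top P_\infty x+\dbE\int|M_\infty^{1/2}v|^2$. Therefore $V^0_\infty(x)=x^\top P_\infty x$, and the unique minimizer is $v\equiv0$, i.e.\ $u^*=-K_\infty X^*$, which gives \eqref{optimal}. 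Uniqueness of the stabilizing solution follows because any stabilizing solution satisfies $x^\top Px=V^0_\infty(x)$.

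The main work is (i)$\Rightarrow$(ii). I would follow Damm--Jacob by a Hilbert-space argument.

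First, pick a stabilizer $\Theta$ from \ref{ass:L2-stabilizable} and substitute $u=\Theta X+w$. This turns the problem into one with stable $[A+B\Theta,C+D\Theta]$ and new weights. It also makes $\mathcal U_{\mathrm{ad}}(x)$ an affine image of $L^2_{\dbF}(0,\infty;\dbR^m)$ via $w$.

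Second, write $J^0_\infty(x;u)=\langle \mathcal R w,w\rangle+2\langle \mathcal S x,w\rangle+\langle \mathcal Q x,x\rangle$ with bounded operators. The key point is to show $\mathcal R\ge\varepsilon I$. Assumption (i) gives $J^0_\infty(0;u)\ge\delta\|X\|^2$; combined with $R>0$, one must upgrade this to coercivity in $u$. I would argue by contradiction: take $u_k$ with $\|u_k\|=1$ and $J^0_\infty(0;u_k)\to0$. Then $\|X_k\|\to0$, so the cross term $2\dbE\int u^\top SX$ vanishes. Hence $\dbE\int u_k^\top R u_k\le J+o(1)\to0$, contradicting $R>0$.

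Third, with $\mathcal R$ invertible, the problem has a unique minimizer for each $x$, and $V^0_\infty(x)=x^\top Px$ for some $P\in\dbS^n$. Using the dynamic programming principle (time-invariance), the optimal control is of state-feedback form $u^*=\widehat\Theta X^*$, and the closed loop is $L^2$-stable because $u^*,X^*\in L^2$.

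The obstacle is identifying $\widehat\Theta=-K_\infty$ and showing that $P$ solves \rf{ARE} with $M_\infty=R+D^\top PD>0$ when $D\neq0$. I would handle this by perturbing the optimal control on a short interval, i.e.\ a needle or variational argument on $[0,h]$ with constant $u$. Letting $h\to0$ gives the Hamiltonian inequality
\[
x^\top(PA+A^\top P+C^\top PC+Q)x+2u^\top N x+u^\top(R+D^\top PD)u\ge0 ,
\]
with equality at $u=\widehat\Theta x$. Minimizing in $u$ gives \rf{ARE} and $\widehat\Theta=-K_\infty$. Strict positivity of $M_\infty$ follows from $\mathcal R\ge\varepsilon I$, by testing with controls concentrated near $t=0$ from $x=0$: this gives $u^\top M_\infty u\ge\varepsilon|u|^2$. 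Mean-square exponential stability then follows from Lemma~\ref{lem:stability-equivalence}. Finally, $P$ may be indefinite because nothing forces $Q\ge0$.
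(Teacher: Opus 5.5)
Your arguments for (iii)$\Rightarrow$(i), (ii)$\Rightarrow$(iii), the verification formula and uniqueness match the paper's. There is a harmless sign slip: the It\^o identity should read $J_T^0(0;u)=-\dbE[X(T)^\top P_\infty X(T)]+\dbE\int_0^T|M_\infty^{1/2}v|^2dt$. Your Step~2 is the same coercivity-to-convexity estimate the paper uses for $J_{2,\infty}$.

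\textbf{The gap is Step~3 of (i)$\Rightarrow$(ii).} You assert that the dynamic programming principle and time-invariance make the optimal control $u^*=\widehat\Theta X^*$ for a constant matrix $\widehat\Theta$. Time-consistency only gives $u^*_x(t)=(\mathbb U^*(t-s)\circ\theta_s)\,X^*_x(s)$ for a.e.\ $t\ge s$. Here $\mathbb U^*$ collects the optimal controls from $e_1,\dots,e_n$, and $\theta_s$ shifts the Brownian path. To extract a constant gain you would have to evaluate the merely square-integrable process $\mathbb U^*$ at $t=0$. That is meaningless without a regularity result you do not have. This closed-loop representation is exactly the hard part of the existence theory for stabilizing solutions, and the paper does not reprove it. Instead it:
\begin{itemize}
\item splits $Q=Q_1-Q_2$;
\item uses the positive definite stabilizing solution $P_1$ of the auxiliary standard problem to pass to the stable system $[A_1,C_1]$;
\item proves uniform convexity of $J_{2,\infty}$ there;
\item imports a stabilizing $P_2$ from Sun--Yong's Theorem~5.3, and sets $P_\infty=P_1+P_2$.
\end{itemize}
In your plan, two things rest on the unproved claim: the ``equality at $u=\widehat\Theta x$'' (which is what gives \eqref{ARE} rather than an inequality), and the closed-loop stability.

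\textbf{How to repair it with your own tools: reverse the logic.} Write $N=B^\top P+D^\top PC+S$, $M=R+D^\top PD$, and $\mathcal H(x,u)=x^\top(PA+A^\top P+C^\top PC+Q)x+2u^\top Nx+u^\top Mu$.
\begin{itemize}
\item Use only the sub-optimal half of dynamic programming: a constant $u_0$ on $[0,h]$, then the optimal continuation from $X(h)$ built from $\mathbb U^*$ and the shifted noise. Letting $h\to0$ gives $\mathcal H(x,u)\ge0$ for all $(x,u)$.
\item Your short-time test from $x=0$ gives $M\ge\varepsilon I_m$. Use the optimal continuation there too, since the zero continuation need not be admissible.
\item Hence $\mathcal H(x,u)=x^\top\Gamma x+|M^{1/2}(u+M^{-1}Nx)|^2$ with $\Gamma:=PA+A^\top P+C^\top PC+Q-N^\top M^{-1}N\ge0$.
\item It\^o's formula along the optimal pair, with $\dbE|X^*(T_k)|^2\to0$, gives $x^\top Px=J_\infty^0(x;u^*)=x^\top Px+\dbE\int_0^\infty\mathcal H(X^*,u^*)\,dt$. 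So $\mathcal H(X^*,u^*)=0$ a.e.
\item This forces $u^*=-M^{-1}NX^*$. By path continuity at $t=0$ it also forces $x^\top\Gamma x=0$ for every $x$, i.e.\ $\Gamma=0$, which is \eqref{ARE}.
\item Then $X^*$ solves the closed-loop equation and lies in $L^2_{\dbF}(0,\infty;\dbR^n)$ for every $x$. So $[A-BK_\infty,C-DK_\infty]$ with $K_\infty=M^{-1}N$ is $L^2$-stable, and Lemma~\ref{lem:stability-equivalence} gives (ii).
\end{itemize}

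A shorter alternative: after your Steps~1--2, apply Sun--Yong's theorem directly to the $\Theta$-transformed problem, which is stable and uniformly convex in $w$. Then transform back. The same $P$ solves both Riccati equations and $K_\Theta=K+\Theta$, so the closed loops coincide. This is shorter than the paper's $Q_1-Q_2$ splitting.
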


\begin{corollary}\label{cor:JC-Damm}
	If \ref{ass:L2-stabilizable} and \ref{ass:JC} hold, then \eqref{ARE} admits a unique stabilizing solution, and all the conclusions of Theorem~\ref{thm:Damm} hold.
\end{corollary}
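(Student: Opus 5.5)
The corollary follows by chaining results already stated: \ref{ass:JC} gives the infinite-horizon coercivity needed in Theorem~\ref{thm:Damm}, and Proposition~\ref{prop:strong-regularity} supplies the sign condition on $R$ that Theorem~\ref{thm:Damm} also requires. The plan is to check the hypotheses of Theorem~\ref{thm:Damm} one at a time and then read off its conclusions.

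\emph{Step 1: $R>0$.} Under \ref{ass:JC}, Proposition~\ref{prop:strong-regularity} gives $R+D^\top P_T(t)D\ge\delta I_m$ on $[0,T]$ for every $T>0$. Evaluating at $t=T$, where $P_T(T)=0$, yields $R\ge\delta I_m>0$. If one prefers to avoid the Riccati equation, the same bound follows directly from \eqref{JC}. Take $u=\varepsilon^{-1/2}v\mathbf 1_{[0,\varepsilon]}$ for a fixed $v\in\dbR^m$. Then $X=O(\varepsilon^{1/2})$ in mean square, the terms involving $X$ vanish as $\varepsilon\to0$, and one is left with $v^\top Rv\ge\delta|v|^2$.

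\emph{Step 2: statement (iii) of Theorem~\ref{thm:Damm}.} Proposition~\ref{prop:JC-properties} gives \eqref{JC-infinite} for all $u\in\mathcal U_{\mathrm{ad}}(0)$, and this is exactly \eqref{JC-infinite1}. Its justification is to restrict such a $u$ to $[0,T]$ and apply \eqref{JC}. Since $u$ and $X^{0,u}$ lie in $L^2_{\dbF}(0,\infty)$, the integrals of the quadratic integrand converge absolutely, so one may let $T\to\infty$ by dominated convergence. The state on $[0,T]$ coincides with the restriction of the infinite-horizon state, so the limit is legitimate.

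\emph{Step 3: conclusion.} With \ref{ass:L2-stabilizable}, $R>0$ and (iii) in hand, Theorem~\ref{thm:Damm} applies. Its statement (ii) gives $P_\infty$ solving \eqref{ARE} with $M_\infty>0$ and the closed-loop system $[A-BK_\infty,C-DK_\infty]$ mean-square exponentially stable. By Lemma~\ref{lem:stability-equivalence}, this closed loop is $L^2$-stable, so $P_\infty$ is a stabilizing solution. Theorem~\ref{thm:Damm} also asserts that $P_\infty$ is the unique stabilizing solution, and that \eqref{optimal} gives the value function and the unique optimal control.

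No step is a genuine obstacle, since the corollary is essentially bookkeeping. The only point needing care is Step 1: one must use the uniform constant $\delta$ in Proposition~\ref{prop:strong-regularity}, evaluated at the terminal time, to get $R>0$ strictly, rather than only $R\ge0$.
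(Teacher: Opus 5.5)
Your argument is correct and is essentially the paper's proof: $R\ge\delta I_m$ comes from Proposition~\ref{prop:strong-regularity} at $t=T$, and Proposition~\ref{prop:JC-properties} supplies the infinite-horizon bound that triggers Theorem~\ref{thm:Damm}; you enter through (iii) rather than the coercivity condition (i), which is immaterial given the stated equivalence. One caution on your optional direct derivation of $R>0$: when $D\ne0$ the pulse $u=\varepsilon^{-1/2}v\mathbf 1_{[0,\varepsilon]}$ gives $\dbE|X(\varepsilon)|^2\to|Dv|^2$, so for a fixed horizon $T>\varepsilon$ the state terms on $[\varepsilon,T]$ do not vanish, and you must instead take the horizon $T=\varepsilon$ itself (allowed, since \eqref{JC} holds for every $T>0$), where indeed $\dbE\int_0^\varepsilon|X(t)|^2dt=O(\varepsilon)$.
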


\begin{proof}
	By Proposition~\ref{prop:strong-regularity}, \eqref{prop:strong-regularity1} gives	$R\ge\delta I_m>0$.
	Moreover, \eqref{JC-infinite} in Proposition~\ref{prop:JC-properties} implies the coercivity condition \eqref{C-infinite}.
	Thus the assumptions of Theorem~\ref{thm:Damm} are satisfied, and its conclusions follow.
\end{proof}

\section{Static Optimization Problem}
\label{sec:static-distribution-optimization}

We formulate the static problem over general control distributions.
We then use the maximum-entropy property to show that its optimizer is Gaussian, so the problem can be reduced to the variables $(x,u,\Lambda)$, where $x\in\dbR^n$, $u\in\dbR^m$, and $\Lambda\in\dbS^m_{++}$.
Here $u$ is the mean of the static control distribution, and $\Lambda$ is the static exploration covariance.

We first give a static optimization problem.
Define the pointwise running part
\begin{equation*}
    L_0(x,u) = x^\top Qx+2x^\top S^\top u+u^\top Ru+2q^\top x+2r^\top u.
\end{equation*}
Define the feasible set
\begin{equation*}
    \mathcal V = \left\{(x,u)\in\dbR^n\times\dbR^m\ \middle|\ Ax+Bu+b=0\right\}.
\end{equation*}
%The diffusion contributes the $P_\infty$-weighted term to the static objective. Define
Let $P_\infty$ be the unique stabilizing solution of \eqref{ARE}.
	Define
\begin{equation}\label{F0-1}
    F_0(x,u) = L_0(x,u) + (Cx+Du+\sigma)^\top P_\infty(Cx+Du+\sigma).
\end{equation}
The classical stochastic LQ static problem is
\begin{equation}
    \label{static-problem}
    \begin{aligned}
    \min_{(x,u)\in\mathcal V}\quad &F_0(x,u).
    \end{aligned}
\end{equation}

\begin{proposition}
\label{prop:static-KKT}
Let \ref{ass:L2-stabilizable}--\ref{ass:JC} hold.
Then the static problem \eqref{static-problem} has a unique solution, denoted by $(x_*,u_*)$.
Moreover, there exists a Lagrange multiplier $\zeta^*\in\dbR^n$ satisfying the Karush--Kuhn--Tucker (KKT) conditions
\begin{equation}
    \label{KKT-system}
    \left\{
    \begin{aligned}
    &Ax_*+Bu_*+b=0,\\
    &Qx_*+S^\top u_*+q+C^\top P_\infty(Cx_*+Du_*+\sigma)+A^\top \zeta^*=0,\\
    &Sx_*+Ru_*+r+D^\top P_\infty(Cx_*+Du_*+\sigma)+B^\top \zeta^*=0.
    \end{aligned}
    \right.
\end{equation}
Conversely, if a triple $(x,u,\zeta)$ satisfies the system obtained from \eqref{KKT-system} by replacing $(x_*,u_*,\zeta^*)$ with $(x,u,\zeta)$, then $(x,u)$ is the unique minimizer, that is, $(x_*,u_*)=(x,u)$.
\end{proposition}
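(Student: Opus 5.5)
The plan is to show that the quadratic part of $F_0$ is strictly convex on the linear subspace $\mathcal V_0=\{(x,u):Ax+Bu=0\}$, the tangent space of the affine set $\mathcal V$. Existence, uniqueness and the KKT characterization then follow from finite-dimensional convex optimization under linear equality constraints. Write $F_0(x,u)=G(x,u)+\text{(affine terms)}$ with
$$
G(x,u)=x^\top Qx+2x^\top S^\top u+u^\top Ru+(Cx+Du)^\top P_\infty(Cx+Du).
$$
The key claim is that there is $c>0$ with $G(x,u)\ge c(|x|^2+|u|^2)$ for all $(x,u)\in\mathcal V_0$. Given this, $F_0$ restricted to $\mathcal V$ is strongly convex, hence coercive, so a minimizer exists and is unique.

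\textbf{Coercivity on $\mathcal V_0$.} I would build a homogeneous admissible pair from a static $(x,u)\in\mathcal V_0$ and use the identity $V_\infty^0(y)=y^\top P_\infty y$ from Theorem~\ref{thm:Damm} (available by Corollary~\ref{cor:JC-Damm}), together with \eqref{JC-infinite}.

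\begin{enumerate}[(a)]
\item Fix $\tau>0$. On $[0,\tau]$ use the control $u(t)=u$, starting from $X(0)=x$. Since $Ax+Bu=0$, the drift vanishes and $dX=(CX+Du)\,dW$. Hence $\dbE X(t)=x$, and the second moment grows only through $Cx+Du$.
\item A cleaner route is an Itô argument that avoids this growth. For any admissible $u(\cdot)$ on $[0,\infty)$ and $Y(0)=y$, Itô's formula applied to $Y^\top P_\infty Y$ gives
$$
J^0_\infty(y;u)-y^\top P_\infty y=\dbE\int_0^\infty\bigl[G_{\mathrm{full}}(Y,u)\bigr]dt,
$$
where $G_{\mathrm{full}}(Y,u)=G(Y,u)+2Y^\top P_\infty(AY+Bu)$ and $G_{\mathrm{full}}$ completes the square to $(u+K_\infty Y)^\top M_\infty(u+K_\infty Y)$. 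On $\mathcal V_0$ the term $2Y^\top P_\infty(AY+Bu)$ vanishes. This gives $G(x,u)=(u+K_\infty x)^\top M_\infty(u+K_\infty x)\ge0$ by the ARE, which is only semidefinite.
\item To get strictness I would use the finite-horizon version of \ref{ass:JC}. Take $X(0)=0$ and the control $u(t)=u-\Theta$-type corrections, or more concretely the control $u(t)=u$ on $[0,T]$ with initial state $0$ replaced by the mean-shift $X(t)-x$. Write $X=x+Z$, where $Z$ solves the homogeneous equation driven by control $0$ with forcing $(Cx+Du)\,dW$.
\end{enumerate}

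The alternative I would actually try first is direct. Consider $X(t)=x$ and $u(t)=u$ on $[0,T]$ as a pair for a time-averaged functional. Then apply \eqref{JC} to the zero-initial pair $\bigl(\tfrac{t}{T}x\text{-ramp},u\bigr)$, divide by $T$, and let $T\to\infty$. The ramp's drift error is $x/T$, which is negligible, and the diffusion contribution is accounted for by $P_\infty$ through the terminal value $\dbE X(T)^\top P_\infty X(T)\approx T\,(Cx+Du)^\top P_\infty(Cx+Du)$. This yields $G(x,u)\ge\delta(|x|^2+|u|^2)$.

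\textbf{Main obstacle.} The hard step is making this averaging argument rigorous. One must relate the terminal quantity to $P_\infty$, that is, show that $\dbE X(T)^\top P_\infty X(T)$ grows like $T\,(Cx+Du)^\top P_\infty(Cx+Du)$. The tool is the Itô/ARE identity from (b) combined with Proposition~\ref{prop:JC-properties}. The difficulty is that $P_\infty$ may be indefinite, so no sign of the terminal term can be dropped, and it has to be tracked exactly.

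\textbf{KKT conditions.} Once strict convexity holds, the Lagrangian $F_0(x,u)+2\zeta^\top(Ax+Bu+b)$ yields \eqref{KKT-system}, with the factor $2$ absorbed into $\zeta$. Existence of the multiplier follows because the constraints are linear. Conversely, any KKT triple gives a stationary point of the convex restriction $F_0|_{\mathcal V}$, hence its unique minimizer.
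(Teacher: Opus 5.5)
Your overall architecture (strong convexity of the quadratic part on $\mathcal V_0$, then coercivity, uniqueness and KKT) matches the paper's. In step (b) you even obtain the paper's key identity $G(x,u)=(u+K_\infty x)^\top M_\infty(u+K_\infty x)$ on $\mathcal V_0$, which is pure algebra from \eqref{ARE} and needs no It\^o formula. The gap is that you stop at ``only semidefinite'' and never prove the strictness claim on which everything rests.

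The missing observation is that this form has trivial kernel on $\mathcal V_0$. If it vanishes, then $u=-K_\infty x$, so $0=Ax+Bu=(A-BK_\infty)x$. Since $[A-BK_\infty,C-DK_\infty]$ is $L^2$-stable (Corollary~\ref{cor:JC-Damm}), $A-BK_\infty$ is Hurwitz by Corollary~\ref{cor:Hurwitz}, hence invertible. So $x=0$ and $u=0$. Minimizing $G$ over the compact unit sphere of $\mathcal V_0$ then gives $G\ge c(|x|^2+|u|^2)$.

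The same invertibility supplies something your plan never checks, namely $\mathcal V\ne\emptyset$: take $x^0=-(A-BK_\infty)^{-1}b$ and $u^0=-K_\infty x^0$. Without a feasible point, strong convexity on $\mathcal V_0$ yields no minimizer. Your KKT paragraph is fine once this is in place, since for linear equality constraints the multiplier exists without any rank condition.

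The averaging route you propose instead is not a proof, and as sketched it would not work.
\begin{itemize}
\item \eqref{JC} has no terminal cost, so there is no terminal value through which $P_\infty$ could enter.
\item Under \ref{ass:L2-stabilizable} alone, $[A,C]$ may be unstable. A constant or ramped open-loop control can then make second moments grow exponentially rather than like $T$.
\item Even if $[A,C]$ is stable, the fluctuation $X-x$ has bounded covariance. Its averaged cost is $\xi^\top P_0\xi$, where $\xi=Cx+Du$ and $A^\top P_0+P_0A+C^\top P_0C+Q=0$. Since $P_0\ge P_\infty$, you would be bounding the wrong, larger quantity from below.
\item In (a), the drift is $A(X(t)-x)$, which does not vanish.
\end{itemize}
The averaging idea could be repaired with the feedback $u(t)=u-K_\infty(X(t)-x)$. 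The closed-loop form of \eqref{ARE} then makes the stationary fluctuation cost exactly $\xi^\top P_\infty\xi$. But that is far heavier than the two-line kernel argument above.
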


\begin{remark}
In contrast to  Sun--Wang--Yong \cite{SunWangYong2022}, the matrices \(Q\) and \(P_\infty\) in \rf{F0-1} can be indefinite. Therefore, we need to introduce new ideas and perform a careful convexity analysis of \(F_0\).
\end{remark}

\begin{proof}
By Corollary~\ref{cor:JC-Damm} and Corollary~\ref{cor:Hurwitz},
$A-BK_\infty$ is Hurwitz and hence invertible.
Let
\begin{equation*}
    x^0=-(A-BK_\infty)^{-1}b, \qquad u^0=-K_\infty x^0.
\end{equation*}
Then
\begin{equation*}
    Ax^0+Bu^0+b=(A-BK_\infty)x^0+b=0.
\end{equation*}
Hence $(x^0,u^0)\in\mathcal V$. Moreover,
\begin{equation*}
    [A\ \ B]
    \begin{pmatrix}I_n\\-K_\infty\end{pmatrix}
    =A-BK_\infty
\end{equation*}
is invertible, so $[A\ \ B]$ has full row rank.

Fix $(x,u)\in\mathcal V$.
For any $(\xi,v)\in\dbR^n\times\dbR^m$,
\begin{equation}
    \label{feasible-directions}
    (x+\xi,u+v)\in\mathcal V \quad\Longleftrightarrow\quad A\xi+Bv=0.
\end{equation}
Let $\mathcal T:=\{(\xi,v)\in\dbR^n\times\dbR^m\mid A\xi+Bv=0\}$ be the tangent space.
%Note that
%\begin{equation*}
 %   \begin{aligned}
 %   F_0(x,u)={}&x^\top Qx+2x^\top S^\top u+u^\top Ru+2q^\top x+2r^\top u
 %   +(Cx+Du+\sigma)^\top P_\infty(Cx+Du+\sigma).
 %   \end{aligned}
%\end{equation*}
%The Taylor expansion of the quadratic function $F_0$ gives
The Taylor expansion of the quadratic function $F_0$
	in \eqref{F0-1} gives
\begin{equation}
    \label{quadratic-expansion}
    \begin{aligned}
    F_0(x+\xi,u+v)-F_0(x,u) ={}&2f_x(x,u)^\top\xi+2f_u(x,u)^\top v +f_{xu}(\xi,v).
    \end{aligned}
\end{equation}
Here $2f_x$ and $2f_u$ are the corresponding gradient components, with
\begin{equation*}
    \begin{aligned}
    &f_x(x,u) =Qx+S^\top u+q+C^\top P_\infty(Cx+Du+\sigma),\\
    &f_u(x,u) =Sx+Ru+r+D^\top P_\infty(Cx+Du+\sigma).
    \end{aligned}
\end{equation*}
The second-order term is
\begin{equation*}
    \begin{aligned}
    f_{xu}(\xi,v)={}&\xi^\top(Q+C^\top P_\infty C)\xi +2\xi^\top(S^\top+C^\top P_\infty D)v
    +v^\top(R+D^\top P_\infty D)v.
    \end{aligned}
\end{equation*}
%Set
%\begin{equation*}
 %   \begin{aligned}
 %   M_\infty&:=R+D^\top P_\infty D,\qq
 %   N_\infty:=B^\top P_\infty+D^\top P_\infty C+S,\qq
 %   K_\infty:=M_\infty^{-1}N_\infty.
 %   \end{aligned}
%\end{equation*}
%For $(\xi,v)\in\mathcal T$, the algebraic Riccati equation \eqref{ARE}, the symmetry of $P_\infty$, the tangent constraint %\eqref{feasible-directions}, and the definitions of $N_\infty$ and $K_\infty$ yield
For $(\xi,v)\in\mathcal T$, the algebraic Riccati equation
	\eqref{ARE}, the symmetry of $P_\infty$, the tangent constraint
	\eqref{feasible-directions}, and the definitions in \eqref{gain} yield
{
\begin{equation*}
\begin{aligned}
f_{xu}(\xi,v)
%&=\xi^\top(Q+C^\top P_\infty C)\xi+2\xi^\top\!(S^\top+C^\top P_\infty D)\!v+v^\top M_\infty v
%&&\text{(by the definition of $f_{xu}$)}\\
%&=\xi^\top(-P_\infty A-A^\top P_\infty+N_\infty^\top M_\infty^{-1}N_\infty)\xi+2\xi^\top\!(S^\top+C^\top P_\infty D)\!v+v^\top M_\infty v
%&&\text{(by \eqref{ARE})}\\
%&=-2\xi^\top P_\infty A\xi+\xi^\top N_\infty^\top M_\infty^{-1}N_\infty\xi+2\xi^\top\!(S^\top+C^\top P_\infty D)\!v+v^\top M_\infty v
%&&\text{(by the symmetry of $P_\infty$)}\\
%&=2\xi^\top P_\infty Bv+\xi^\top N_\infty^\top M_\infty^{-1}N_\infty\xi+2\xi^\top\!(S^\top+C^\top P_\infty D)\!v+v^\top M_\infty v
%&&\text{(by \eqref{feasible-directions})}\\
=\xi^\top N_\infty^\top M_\infty^{-1}N_\infty\xi+2\xi^\top N_\infty^\top v+v^\top M_\infty v
%&&\text{(by the definition of $N_\infty$)}\\
%&=(v+M_\infty^{-1}N_\infty\xi)^\top M_\infty(v+M_\infty^{-1}N_\infty\xi)
%&&\text{(by completing the square)}\\
=(v+K_\infty\xi)^\top M_\infty(v+K_\infty\xi).
%&&\text{(by $K_\infty=M_\infty^{-1}N_\infty$)}.
\end{aligned}
\end{equation*}
}
Since $M_\infty>0$, we have $f_{xu}(\xi,v)\geq0$.
If equality holds, then $v=-K_\infty\xi$. Together with \eqref{feasible-directions}, this gives
\begin{equation*}
    (A-BK_\infty)\xi=0.
\end{equation*}
The invertibility of $A-BK_\infty$ implies $\xi=0$ and then $v=0$.
Hence $f_{xu}(\xi,v)>0$ for all $(\xi,v)\in\mathcal T\setminus\{(0,0)\}$.

The unit sphere $\mathbb S_{\mathcal T}:=\{(\xi,v)\in\mathcal T\mid |\xi|^2+|v|^2=1\}$ is compact, and therefore
$c:=\min_{(\xi,v)\in\mathbb S_{\mathcal T}}f_{xu}(\xi,v)>0$.
For any nonzero $(\xi,v)\in\mathcal T$, let $r=\sqrt{|\xi|^2+|v|^2}$.
The quadratic homogeneity of $f_{xu}$ gives
\begin{equation}
    \label{quadratic-bound}
    \begin{aligned}
    f_{xu}(\xi,v) &=r^2f_{xu}\left(\frac{\xi}{r},\frac{v}{r}\right)
    %&&\text{(by quadratic homogeneity)}\\
    \geq cr^2
    =c\bigl(|\xi|^2+|v|^2\bigr).
    \end{aligned}
\end{equation}
The assertion is immediate for $(\xi,v)=(0,0)$.

Every $(x,u)\in\mathcal V$ can be written as $(x,u)=(x^0,u^0)+(\xi,v)$ with $(\xi,v)\in\mathcal T$.
By \eqref{quadratic-expansion} and \eqref{quadratic-bound}, there exists a constant $\cC>0$, independent of $(\xi,v)$, such that
{
\begin{equation*}
\begin{aligned}
F_0(x^0+\xi,u^0+v)
%&=F_0(x^0,u^0)+2f_x(x^0,u^0)^\top\xi+2f_u(x^0,u^0)^\top v+f_{xu}(\xi,v)
%&&\text{(by \eqref{quadratic-expansion})}\\
%&\geq f_{xu}(\xi,v)-\cC\bigl(|\xi|+|v|\bigr)-\cC\\
&\geq c\bigl(|\xi|^2+|v|^2\bigr)-\cC\bigl(|\xi|+|v|\bigr)-\cC.
%&&\text{(by \eqref{quadratic-bound})}.
\end{aligned}
\end{equation*}
}
%Thus $F_0$ is coercive on $\mathcal V$. This means that $F_0(x,u)\to\infty$ whenever $(x,u)\in\mathcal V$ and %$|x|+|u|\to\infty$. Consequently, every minimizing sequence for $F_0$ on $\mathcal V$ is bounded and hence has a %convergent subsequence in the finite-dimensional space. Since $\mathcal V$ is a closed affine set and $F_0$ is continuous, %the limit of this subsequence belongs to $\mathcal V$ and attains the minimum of $F_0$.
Thus $F_0$ is coercive on the closed affine set $\mathcal V$. Since $F_0$ is continuous, it attains its minimum on $\mathcal V$.
Moreover, \eqref{quadratic-bound} gives $f_{xu}(\xi,v)>0$ for every nonzero feasible direction $(\xi,v)\in\mathcal T$.
Hence $F_0$ is strictly convex on $\mathcal V$, so it has at most one minimizer.
Therefore the static problem \eqref{static-problem} admits a unique minimizer $(x_*,u_*)$.

Since $[A\ \ B]$ has full row rank, the finite-dimensional Lagrange multiplier theorem for equality constraints (see Yong \cite{Yong2018}) yields a $\zeta^*\in\dbR^n$ such that
\begin{equation*}
    \nabla F_0(x_*,u_*)+2[A\ \ B]^\top \zeta^*=0.
\end{equation*}
%Since $\nabla_xF_0(x,u)=2f_x(x,u)$ and $\nabla_uF_0(x,u)=2f_u(x,u)$, writing the two block components gives %\eqref{KKT-system} as follows:
%\begin{equation*}
%    \left\{
%    \begin{aligned}
%    &Ax_*+Bu_*+b=0,\\
%    &Qx_*+S^\top u_*+q+C^\top P_\infty(Cx_*+Du_*+\sigma)+A^\top \zeta^*=0,\\
%    &Sx_*+Ru_*+r+D^\top P_\infty(Cx_*+Du_*+\sigma)+B^\top \zeta^*=0.
%    \end{aligned}
%    \right.
%\end{equation*}
Since $\nabla_xF_0=2f_x$ and $\nabla_uF_0=2f_u$, this condition, together with feasibility, gives \eqref{KKT-system}.

Conversely, let $(x,u,\zeta)$ satisfy \eqref{KKT-system} and let $(\xi,v)\in\mathcal T$.
The KKT equations give
\begin{equation*}
    f_x(x,u)=-A^\top \zeta, \qquad f_u(x,u)=-B^\top \zeta.
\end{equation*}
\begin{equation*}
    f_x(x,u)^\top\xi+f_u(x,u)^\top v =-\zeta^\top(A\xi+Bv)=0.
\end{equation*}
By \eqref{quadratic-expansion} and \eqref{quadratic-bound}, for every nonzero $(\xi,v)\in\mathcal T$,
\begin{equation*}
    \begin{aligned}
    F_0(x+\xi,u+v)-F_0(x,u) &=f_{xu}(\xi,v)
    %&&\text{(by the KKT equations and $A\xi+Bv=0$)}\\
    \geq c\bigl(|\xi|^2+|v|^2\bigr)
    %&&\text{(by \eqref{quadratic-bound})}\\
    >0.
    %&&\text{(since $(\xi,v)\ne(0,0)$)}.
    \end{aligned}
\end{equation*}
Thus $(x,u)$ is the unique minimizer, and therefore $(x,u)=(x_*,u_*)$.
\end{proof}

For a probability distribution $\pi$ on $\dbR^m$ with finite second moment, set
\begin{equation*}
    \bar u_\pi=\int_{\dbR^m}u\,\pi(du), \qquad \Lambda_\pi=\int_{\dbR^m}(u-\bar u_\pi)(u-\bar u_\pi)^\top\pi(du).
\end{equation*}
Denote
\begin{equation}
    \label{distribution-objective}
    \begin{aligned}
    F_\lambda(x,\pi)&:=\int_{\dbR^m}F_0(x,u)\,\pi(du)-\lambda\Ent(\pi).
    \end{aligned}
\end{equation}
The exploratory static problem over general distributions is
\begin{equation}
    \label{distribution-problem}
    \begin{aligned}
    \min_{x,\pi}\quad  F_\lambda(x,\pi)\qq
    \text{s.t.}\quad  Ax+B\bar u_\pi+b=0.
    \end{aligned}
\end{equation}
Here $\pi$ ranges over probability distributions that are absolutely continuous with respect to Lebesgue measure and have finite second moment and finite differential entropy.

Expanding $F_0(x,u)$ with respect to the control variable gives
\begin{equation*}
    F_0(x,u) = c_\infty(x)+2g_\infty(x)^\top u+u^\top M_\infty u,
\end{equation*}
where
\begin{equation*}
    \begin{aligned}
    &g_\infty(x) =Sx+r+D^\top P_\infty(Cx+\sigma),\\
    &c_\infty(x) =x^\top Qx+2q^\top x+(Cx+\sigma)^\top P_\infty(Cx+\sigma).
    \end{aligned}
\end{equation*}
%Note that
%\begin{equation*}
    %u^\top M_\infty u=\tr(M_\infty uu^\top),
%\end{equation*}
%and
%\begin{equation*}
    %\int_{\dbR^m}\tr(M_\infty uu^\top)\,\pi(du) = \tr\left(M_\infty\int_{\dbR^m}uu^\top\pi(du)\right).
%\end{equation*}
Taking expectation under a general distribution $\pi$ and using the second-moment decomposition $\int_{\dbR^m}uu^\top\,\pi(du)=\bar u_\pi\bar u_\pi^\top+\Lambda_\pi$, we obtain
\begin{equation*}
\begin{aligned}
    \int_{\dbR^m}F_0(x,u)\,\pi(du)
    %&=\int_{\dbR^m}\left[c_\infty(x)+2g_\infty(x)^\top u+u^\top M_\infty u\right]\pi(du)
    %\\
    %&=c_\infty(x)+2g_\infty(x)^\top\bar u_\pi
    %+\int_{\dbR^m}u^\top M_\infty u\,\pi(du)
   %\\
    %&=c_\infty(x)+2g_\infty(x)^\top\bar u_\pi
    %+\tr\left(M_\infty\int_{\dbR^m}uu^\top\pi(du)\right)\\
    &=c_\infty(x)+2g_\infty(x)^\top\bar u_\pi
    +\bar u_\pi^\top M_\infty\bar u_\pi+\tr(M_\infty\Lambda_\pi)\\
    &=F_0(x,\bar u_\pi)+\tr(M_\infty\Lambda_\pi).
\end{aligned}
\end{equation*}
%
%Thus the objective in \rf{distribution-problem} can be written as
By \eqref{distribution-objective}, the objective in
	\eqref{distribution-problem} can be written as
\begin{equation}
    \label{distribution-decomposition}
    F_\lambda(x,\pi)=F_0(x,\bar u_\pi)+\tr(M_\infty\Lambda_\pi)-\lambda\Ent(\pi).
\end{equation}
Except for the entropy term, the objective only depends on the first moment and the centered second moment of the control distribution.

The following is taken from Cover--Thomas \cite{CoverThomas2006}.

\begin{lemma}
\label{lem:maximum-entropy}
Let $\pi(du)=p(u)du$ be a probability distribution on $\dbR^m$ with finite second moment and finite entropy.
Let $u\in\dbR^m$ and $\Lambda\in\dbS^m_{++}$ be its mean and covariance, respectively.
Then
\begin{equation*}
    %\label{maximum-entropy}
    \Ent(\pi) \le \Ent\big(\cNcal(u,\Lambda)\big) = \frac12\log\big((2\pi e)^m\det\Lambda\big).
\end{equation*}
Equality holds if and only if $\pi=\cNcal(u,\Lambda)$.
\end{lemma}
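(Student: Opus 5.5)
The natural route is the Gibbs variational argument: write entropy as a Kullback--Leibler divergence against the Gaussian $\cNcal(u,\Lambda)$ and use its nonnegativity.

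Let $\phi(v)=(2\pi)^{-m/2}(\det\Lambda)^{-1/2}\exp\bigl(-\tfrac12(v-u)^\top\Lambda^{-1}(v-u)\bigr)$ be the density of $\cNcal(u,\Lambda)$. Since $p$ has finite entropy and second moment, both $\int p\log p\,dv$ and $\int p\log\phi\,dv$ are finite. The second is finite because $\log\phi$ is a quadratic polynomial in $v$ and $\pi$ has finite second moment.

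Hence the relative entropy
\[
\int_{\dbR^m}p\log\frac{p}{\phi}\,dv=-\Ent(\pi)-\int_{\dbR^m}p\log\phi\,dv
\]
is well defined. It is nonnegative by Jensen's inequality applied to the convex function $-\log$:
\[
-\int_{\{p>0\}}p\log\frac{\phi}{p}\,dv\ \ge\ -\log\int_{\{p>0\}}\phi\,dv\ \ge\ 0 .
\]

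The key computation is that $-\int p\log\phi\,dv$ depends only on the first two moments of $\pi$. Indeed,
\[
-\int_{\dbR^m} p\log\phi\,dv=\tfrac12\log\big((2\pi)^m\det\Lambda\big)+\tfrac12\tr\Bigl(\Lambda^{-1}\int_{\dbR^m}(v-u)(v-u)^\top p(v)\,dv\Bigr).
\]
Since $\pi$ has mean $u$ and covariance $\Lambda$, the trace term equals $\tfrac12\tr(I_m)=\tfrac m2$. Therefore $-\int p\log\phi\,dv=\tfrac12\log\big((2\pi e)^m\det\Lambda\big)=\Ent(\cNcal(u,\Lambda))$. Combining this with the nonnegativity of the relative entropy gives the inequality.

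For the equality case, equality forces both Jensen steps to be equalities. First, $\phi/p$ must be a.e. constant on $\{p>0\}$. Second, $\int_{\{p>0\}}\phi\,dv=1$, so $\{p>0\}$ has full $\phi$-measure. Together these give $p=\phi$ a.e. The converse direction is trivial.

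The only delicate point is justifying that the integrals split, that is, avoiding $\infty-\infty$. This is exactly where finite entropy and finite second moment are used. I expect no real obstacle beyond this; alternatively the statement can be cited directly as \cite[Theorem 8.6.5]{CoverThomas2006}.
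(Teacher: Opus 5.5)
Your argument is correct and is essentially the paper's: the paper gives no proof of its own and takes the lemma from Cover--Thomas (Theorem~8.6.5). That proof is exactly the nonnegativity of the relative entropy of $p$ with respect to the Gaussian density $\phi$, combined with the observation that $\int p\log\phi\,dv$ depends only on the mean and covariance of $\pi$, because $\log\phi$ is quadratic. Your integrability check (finite entropy and finite second moment, which rule out $\infty-\infty$) and your treatment of the equality case via strict convexity of $-\log$ fill in details the citation leaves implicit.
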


Take any feasible pair $(x,\pi)$, and set $u=\bar u_\pi$, $\Lambda=\Lambda_\pi$, and $\pi_G=\cNcal(u,\Lambda)$.
Since $\pi_G$ and $\pi$ have the same mean and covariance, the drift constraint remains unchanged:
$Ax+B\bar u_{\pi_G}+b=Ax+B\bar u_\pi+b$.
Also, all terms in \eqref{distribution-decomposition} except the entropy term are unchanged.
By Lemma~\ref{lem:maximum-entropy}, $\Ent(\pi_G)\ge\Ent(\pi)$, and hence
\begin{equation*}
    F_\lambda(x,\pi_G) \le F_\lambda(x,\pi).
\end{equation*}
If $\pi$ is not Gaussian, the above inequality is strict.
Thus the general distribution problem \eqref{distribution-problem} is equivalent to the following Gaussian moment problem.

\begin{taggedthm}{Problem $(O)$}
\begin{equation}
    \label{Gaussian-problem}
    \begin{aligned}
    \min_{x,u,\Lambda>0}\quad  \mathcal F_\lambda(x,u,\Lambda)\qq
    \text{s.t.}\quad  Ax+Bu+b=0,
    \end{aligned}
\end{equation}
where $\Lambda>0$ means $\Lambda\in\dbS^m_{++}$, and
\begin{equation}
    \label{objective-Gaussian}
    \begin{aligned}
    \mathcal F_\lambda(x,u,\Lambda) &:= F_0(x,u) + \tr(M_\infty\Lambda) - \frac{\lambda}{2}\log\big((2\pi e)^m\det\Lambda\big).
    \end{aligned}
\end{equation}
\end{taggedthm}

We separate the terms in \eqref{Gaussian-problem} that contain $\Lambda$.
%Since
%\[
    %-\frac{\lambda}{2}\log\big((2\pi e)^m\det\Lambda\big) = -\frac{\lambda}{2}\log\det\Lambda - \frac{\lambda}{2}m\log(2\pi e),
%\]
%The objective in \rf{Gaussian-problem} can be written as
By \eqref{objective-Gaussian}, we can write
\begin{equation}
    \label{objective-decomposition}
    \begin{aligned}
    \mathcal F_\lambda(x,u,\Lambda)
    = F_0(x,u) + G_\lambda(\Lambda) - \frac{\lambda}{2}m\log(2\pi e),
    \end{aligned}
\end{equation}
where
\begin{equation} \label{objective-decomposition1-G}
    G_\lambda(\Lambda) = \tr(M_\infty\Lambda) - \frac{\lambda}{2}\log\det\Lambda.
\end{equation}
The last term of \eqref{objective-decomposition} only depends on $\lambda$ and the dimension $m$, and does not affect the location of the optimizer.
Hence the state--mean variables $(x,u)$ and the exploration covariance variable $\Lambda$ can be optimized separately.

\begin{proposition}
\label{prop:optimal-covariance}
The covariance part $G_\lambda$, defined by \rf{objective-decomposition1-G}, has a unique minimizer on $\dbS^m_{++}$, and this minimizer is
\begin{equation}
    \label{optimal-covariance}
        \Lambda_*^\lambda=\Lambda_\infty^\lambda
    :=\frac{\lambda}{2}M_\infty^{-1}
    =\frac{\lambda}{2}\big(R+D^\top P_\infty D\big)^{-1}.
\end{equation}
\end{proposition}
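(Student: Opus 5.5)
The plan is to exploit strict convexity of $G_\lambda$ on the open convex cone $\dbS^m_{++}$ together with the positive definiteness of $M_\infty$. By Corollary~\ref{cor:JC-Damm} (via Theorem~\ref{thm:Damm}), $M_\infty=R+D^\top P_\infty D>0$. The map $\Lambda\mapsto\tr(M_\infty\Lambda)$ is linear, and $\Lambda\mapsto-\log\det\Lambda$ is strictly convex on $\dbS^m_{++}$. Hence $G_\lambda$ is strictly convex for $\lambda>0$ and has at most one minimizer. Any critical point is therefore the unique global minimizer.

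Next I compute the gradient. Using $\nabla_\Lambda\log\det\Lambda=\Lambda^{-1}$ for the Frobenius inner product, we get
\[
\nabla G_\lambda(\Lambda)=M_\infty-\frac{\lambda}{2}\Lambda^{-1}.
\]
This vanishes exactly at $\Lambda=\frac{\lambda}{2}M_\infty^{-1}$, which lies in $\dbS^m_{++}$ because $M_\infty>0$. This gives existence and the formula \eqref{optimal-covariance}.

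To make the argument self-contained without invoking matrix calculus, I would use a change of variables. Write $\Gamma=M_\infty^{1/2}\Lambda M_\infty^{1/2}\in\dbS^m_{++}$. Then
\[
G_\lambda(\Lambda)=\tr\Gamma-\frac{\lambda}{2}\log\det\Gamma+\frac{\lambda}{2}\log\det M_\infty.
\]
If $\gamma_1,\dots,\gamma_m>0$ are the eigenvalues of $\Gamma$, this equals $\sum_i\bigl(\gamma_i-\frac{\lambda}{2}\log\gamma_i\bigr)$ plus a constant. The scalar function $\gamma\mapsto\gamma-\frac{\lambda}{2}\log\gamma$ is uniquely minimized at $\gamma=\lambda/2$. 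So the minimum is attained if and only if every eigenvalue equals $\lambda/2$, that is, $\Gamma=\frac{\lambda}{2}I_m$. Transforming back gives $\Lambda=\frac{\lambda}{2}M_\infty^{-1}$.

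The only real input is $M_\infty>0$, which is already supplied by Corollary~\ref{cor:JC-Damm}. I therefore expect no serious obstacle. The one point needing care is that uniqueness must be stated over the whole open cone, where the infimum could a priori escape to the boundary or to infinity. The eigenvalue reduction settles this directly, since each scalar term tends to $+\infty$ both as $\gamma\to0^+$ and as $\gamma\to\infty$.
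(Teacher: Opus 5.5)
Your proposal is correct. Your first two paragraphs are essentially the paper's proof. The paper computes $\nabla G_\lambda(\Lambda)=M_\infty-\frac{\lambda}{2}\Lambda^{-1}$, solves the first-order condition, and proves strict convexity through the explicit Hessian identity $\langle\nabla^2G_\lambda(\Lambda)H,H\rangle=\frac{\lambda}{2}\|\Lambda^{-1/2}H\Lambda^{-1/2}\|_F^2>0$; you instead quote strict convexity of $-\log\det$ on $\dbS^m_{++}$ as a known fact.

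Your congruence reduction $\Gamma=M_\infty^{1/2}\Lambda M_\infty^{1/2}$, followed by splitting over eigenvalues, is a genuinely different and more elementary route. It avoids matrix calculus entirely. It gives existence and uniqueness at once through the single inequality $G_\lambda(\Lambda)\ge\frac{m\lambda}{2}\bigl(1-\log\frac{\lambda}{2}\bigr)+\frac{\lambda}{2}\log\det M_\infty$, with equality if and only if $\Gamma=\frac{\lambda}{2}I_m$. It also yields the optimal value for free. The paper's route, by contrast, establishes strict convexity of $G_\lambda$ on the whole cone, a structural fact that does not depend on locating the minimizer.

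Your worry about the infimum escaping to the boundary or to infinity is unnecessary in the convexity argument. A stationary point of a convex function on an open convex set is automatically a global minimizer, so exhibiting the critical point already settles existence.
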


\begin{proof}
Let $H=H^\top$ be any symmetric perturbation.
By linearity of the trace and the differentiation formulas for matrix-variable functions (see \cite[Section~2.2.2]{LiuHuLiWen2020}), setting
$f(\Lambda)=\log\det\Lambda$ gives
\[
    \nabla_\Lambda\tr(M_\infty\Lambda)=M_\infty, \qquad \nabla f(\Lambda)=(\Lambda^{-1})^\top=\Lambda^{-1}, \qquad \nabla^2 f(\Lambda)H=-\Lambda^{-1}H\Lambda^{-1}.
\]
It follows that $\nabla G_\lambda(\Lambda)=M_\infty-\frac{\lambda}{2}\Lambda^{-1}$.
The first-order condition for an interior minimizer is
$M_\infty-\frac{\lambda}{2}\Lambda^{-1}=0$, which gives \eqref{optimal-covariance}; since $M_\infty>0$, this stationary point belongs to $\dbS^m_{++}$.
Moreover, for every symmetric matrix $H\ne0$,
\[
    \begin{aligned}
    \left\langle\nabla^2G_\lambda(\Lambda)H,H\right\rangle =\frac{\lambda}{2}\tr \bigl(\Lambda^{-1}H\Lambda^{-1}H\bigr)
    =\frac{\lambda}{2} \left\|\Lambda^{-1/2}H\Lambda^{-1/2}\right\|_F^2 >0.
    \end{aligned}
\]
Thus $G_\lambda$ is strictly convex on $\dbS^m_{++}$, and the stationary point in \eqref{optimal-covariance} is its unique global minimizer.
\end{proof}

Since the optimized covariance part only contributes a constant independent of $(x,u)$, minimizing with respect to $(x,u)$ is exactly the classical stochastic static problem \eqref{static-problem}.
Therefore the static mean center of the exploratory problem is unchanged:
\begin{equation*}
    (x_*^\lambda,u_*^\lambda)=(x_*,u_*).
\end{equation*}
%Hence the unique optimizer of the exploratory static distribution problem is
Hence the unique optimizer of the Gaussian moment problem
\eqref{Gaussian-problem} is
\begin{equation*}
    (x_*,u_*,\Lambda_*^\lambda),
\end{equation*}
and the corresponding static Gaussian control distribution is
\begin{equation*}
    \pi_*^\lambda(\cdot) = \cNcal(u_*,\Lambda_*^\lambda).
\end{equation*}

\begin{proposition}
Under \ref{ass:L2-stabilizable}--\ref{ass:JC}, the Problem  $(O)$ defined by \rf{Gaussian-problem} has a unique optimizer $(x_*,u_*,\Lambda_*^\lambda)$, where $(x_*,u_*)$ is the unique solution of the classical stochastic static problem \eqref{static-problem}, and
$\Lambda_*^\lambda=\frac{\lambda}{2}(R+D^\top P_\infty D)^{-1}$.
Equivalently, the static exploratory control distribution of \rf{distribution-problem} is given by $\cNcal(u_*,\Lambda_*^\lambda)$.
\end{proposition}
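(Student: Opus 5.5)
The argument assembles the results already established in this section. Under \ref{ass:L2-stabilizable}--\ref{ass:JC}, Corollary~\ref{cor:JC-Damm} supplies the unique stabilizing solution $P_\infty$ of \eqref{ARE} with $M_\infty=R+D^\top P_\infty D>0$. Hence $F_0$ in \eqref{F0-1} and $\mathcal F_\lambda$ in \eqref{objective-Gaussian} are well defined.

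\emph{Step 1: separation.} By \eqref{objective-decomposition},
\[
\mathcal F_\lambda(x,u,\Lambda)=F_0(x,u)+G_\lambda(\Lambda)-\tfrac{\lambda}{2}m\log(2\pi e).
\]
The constraint $Ax+Bu+b=0$ involves only $(x,u)$, so the feasible set of Problem $(O)$ is the product $\mathcal V\times\dbS^m_{++}$. For any feasible $(x,u,\Lambda)$,
\[
\mathcal F_\lambda(x,u,\Lambda)\ge \min_{\mathcal V}F_0+\min_{\dbS^m_{++}}G_\lambda-\tfrac{\lambda}{2}m\log(2\pi e).
\]
Both minima exist: the first by Proposition~\ref{prop:static-KKT}, attained only at $(x_*,u_*)$; the second by Proposition~\ref{prop:optimal-covariance}, attained only at $\Lambda_*^\lambda=\frac{\lambda}{2}M_\infty^{-1}$.

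\emph{Step 2: existence and uniqueness.} The point $(x_*,u_*,\Lambda_*^\lambda)$ attains the lower bound, so it is an optimizer. For uniqueness, suppose $(x,u,\Lambda)$ also attains the bound. Each summand is bounded below by its own minimum, so equality of the sums forces $F_0(x,u)=\min_{\mathcal V}F_0$ and $G_\lambda(\Lambda)=\min G_\lambda$. The uniqueness parts of the two propositions then give $(x,u)=(x_*,u_*)$ and $\Lambda=\Lambda_*^\lambda$.

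\emph{Step 3: transfer to the distribution problem.} Let $(x,\pi)$ be feasible for \eqref{distribution-problem}, with $\pi$ having finite second moment and finite entropy. Its covariance $\Lambda_\pi$ must be positive definite. Otherwise $\pi$ would be supported on a proper affine subspace, contradicting absolute continuity. So $(x,\bar u_\pi,\Lambda_\pi)$ is feasible for $(O)$.

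By \eqref{distribution-decomposition} and Lemma~\ref{lem:maximum-entropy},
\[
F_\lambda(x,\pi)\ge F_\lambda\bigl(x,\cNcal(\bar u_\pi,\Lambda_\pi)\bigr)=\mathcal F_\lambda(x,\bar u_\pi,\Lambda_\pi)\ge \mathcal F_\lambda(x_*,u_*,\Lambda_*^\lambda)=F_\lambda\bigl(x_*,\cNcal(u_*,\Lambda_*^\lambda)\bigr).
\]
Hence $(x_*,\cNcal(u_*,\Lambda_*^\lambda))$ is optimal for \eqref{distribution-problem}.

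If $(x,\pi)$ is also optimal, both inequalities are equalities. Equality in the second, together with Step~2, gives $x=x_*$, $\bar u_\pi=u_*$ and $\Lambda_\pi=\Lambda_*^\lambda$. Equality in the first, by the equality case of Lemma~\ref{lem:maximum-entropy}, gives $\pi=\cNcal(\bar u_\pi,\Lambda_\pi)$. Therefore $\pi=\cNcal(u_*,\Lambda_*^\lambda)$.

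\emph{Main obstacle.} Nothing here is hard. The one delicate point is checking that every admissible $\pi$ has nondegenerate covariance, so that the entropy bound of Lemma~\ref{lem:maximum-entropy} applies and the reduction to $(O)$ loses no feasible point. The absolute-continuity argument in Step~3 settles this.
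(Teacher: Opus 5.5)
Your proposal is correct and follows the paper's argument. The separation \eqref{objective-decomposition} reduces Problem $(O)$ to Propositions~\ref{prop:static-KKT} and~\ref{prop:optimal-covariance}, and the equivalence with \eqref{distribution-problem} is the maximum-entropy reduction via Lemma~\ref{lem:maximum-entropy} that the paper carries out just before stating Problem $(O)$; your explicit check that absolute continuity forces $\Lambda_\pi\in\dbS^m_{++}$, and your equality-case argument for uniqueness, fill in details the paper leaves implicit.
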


\begin{proof}
The separation formula \eqref{objective-decomposition} shows that, up to an additive constant, the minimization over $(x,u)$ and the minimization over $\Lambda$ are independent.
The $(x,u)$ part is exactly \eqref{static-problem}, whose unique solution is $(x_*,u_*)$ by Proposition~\ref{prop:static-KKT}.
The covariance part has the unique solution \eqref{optimal-covariance} by Proposition~\ref{prop:optimal-covariance}.
The proof is complete.
\end{proof}

\section{Convergence of Riccati Equations}
\label{sec:riccati-parameter-convergence}

This section studies the convergence of finite-horizon parameters as $T\to\infty$.
The Gaussian feedback law in Proposition~\ref{prop:exploratory-solution}
is determined by the same Riccati equation \eqref{Riccati-T}
as in the corresponding classical stochastic LQ problem.
In this section, the constants $\cC,\gamma>0$ may change from line to line, but they are always independent of the time horizon $T$.
Unless otherwise stated, the exploration weight $\lambda>0$ is fixed.

Recall that $P_T$ is the unique strongly regular solution of
\eqref{Riccati-T}, and $P_\infty$ is the unique stabilizing solution
of \eqref{ARE}.
%Let $P_\infty$ denote the stabilizing solution of
%\eqref{ARE}, characterized in Theorem~\ref{thm:Damm}.
Similar to Sun--Wang--Yong \cite{SunWangYong2022},  we next establish an estimate for the difference between $P_T$ and $P_\infty$. However, since the cost functional is indefinite,
the method employed in Sun--Wang--Yong \cite{SunWangYong2022} cannot be applied here.
Motivated by Sun--Yong \cite{SunYong2024ZeroSum}, we overcome this difficulty via a Hilbert space approach.

\begin{lemma}
\label{lem:riccati-convergence}
Suppose that the uncontrolled system $[A,C]$ is $L^2$-stable
and that uniform convexity \eqref{UC} holds with a common constant
$\delta_T=\delta_0>0$ independent of $T$.
Let $P_T$ be the unique strongly regular solution of \eqref{Riccati-T},
and let $P_\infty$ be the unique stabilizing solution of \eqref{ARE}.
Then there exist constants $\cC,\gamma>0$, independent of $T$, such that
\begin{equation}
    \label{Riccati-convergence}
    \|P_T(t)-P_\infty\| \le \cC e^{-\gamma(T-t)}, \qquad 0\le t\le T.
\end{equation}
Moreover, $P_T(t)$ is uniformly bounded with respect to both $T$ and $t$.
\end{lemma}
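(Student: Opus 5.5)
We plan to work with the value-function representation of the Riccati solutions and to compare finite and infinite horizons through a Hilbert-space argument. The first step is to set up the operators. For the homogeneous problem started at time $t$ from state $x$ on $[t,T]$, the state is $X=\Gamma_t x+\mathcal L_{t,T}u$ with bounded linear operators. Lemma~\ref{lem:state-estimate} gives bounds on these operators that are uniform in $t$ and $T$, because $[A,C]$ is $L^2$-stable. Hence
\[
J_T^0(t,x;u)=\langle \mathcal M_{T}u,u\rangle+2\langle \mathcal K_{T}x,u\rangle+\langle \mathcal G_T x,x\rangle ,
\]
and \eqref{UC} with constant $\delta_0$ gives $\mathcal M_T\ge\delta_0 I$ uniformly in $T$. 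By time-invariance we may shift to $[0,T-t]$. We will use $x^\top P_T(t)x=V_T^0(t,x)=\langle(\mathcal G-\mathcal K^*\mathcal M^{-1}\mathcal K)x,x\rangle$, which is the standard Sun--Li--Yong representation under \eqref{UC}. Since $\|\mathcal M_T^{-1}\|\le\delta_0^{-1}$ and the other operators are uniformly bounded, $P_T(t)$ is bounded uniformly in $T$ and $t$. This proves the second claim.

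The second step identifies the limit. We will show $x^\top P_T(t)x\to V^0_\infty(x)=x^\top P_\infty x$ as $T-t\to\infty$; Theorem~\ref{thm:Damm} applies, since \eqref{UC}, the stability of $[A,C]$ and Lemma~\ref{lem:state-estimate} give \eqref{JC}, as in Proposition~\ref{prop:JC-UC}. Let $s=T-t$ denote the remaining horizon.

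\emph{Upper bound.} Take the infinite-horizon optimal control $u^*=-K_\infty X^*$ restricted to $[0,s]$. Since $[A-BK_\infty,C-DK_\infty]$ is mean-square exponentially stable, the tail $\dbE\int_s^\infty(|X^*|^2+|u^*|^2)$ is bounded by $\cC e^{-2\gamma s}|x|^2$. Hence
\[
V_s^0(x)\le J_s^0(x;u^*)\le x^\top P_\infty x+\cC e^{-2\gamma s}|x|^2 .
\]

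\emph{Lower bound.} Let $\bar u_s=-K_s(\cdot)\bar X_s$ be the finite-horizon optimal control, and extend it by $0$ after time $s$. The resulting state on $[s,\infty)$ evolves under the stable $[A,C]$, so by Corollary~\ref{cor:exponential-stability} its tail cost is at most $\cC\,\dbE|\bar X_s(s)|^2$. Then
\[
x^\top P_\infty x\le V_s^0(x)+\cC\,\dbE|\bar X_s(s)|^2 .
\]
It therefore remains to show $\dbE|\bar X_s(s)|^2\le \cC e^{-2\gamma s}|x|^2$.

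This exponential decay of the terminal state of the finite-horizon optimal trajectory is the main obstacle. We will try the following first. By the uniform coercivity and the first step, $\dbE\int_0^s(|\bar X_s|^2+|\bar u_s|^2)\le\cC|x|^2$ uniformly in $s$. Apply the same bound from every intermediate time $r$, using the dynamic programming principle and the fact that the optimal trajectory restricted to $[r,s]$ is optimal there. This gives
\[
\dbE\int_r^s|\bar X_s|^2\le \cC\,\dbE|\bar X_s(r)|^2 .
\]
A Gronwall/Datko-type argument then converts this into exponential decay $\dbE|\bar X_s(\tau)|^2\le \cC e^{-2\gamma(\tau-r)}\dbE|\bar X_s(r)|^2$, uniformly in $s$, provided we also have a uniform bound $\dbE|\bar X_s(\tau)|^2\le\cC\,\dbE|\bar X_s(r)|^2$ for $\tau\ge r$. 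That bound follows from Lemma~\ref{lem:state-estimate} with $\dbE\int_r^\tau|\bar u_s|^2\le\cC\,\dbE|\bar X_s(r)|^2$.

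Combining the two bounds gives
\[
|x^\top(P_T(t)-P_\infty)x|\le \cC e^{-2\gamma(T-t)}|x|^2 .
\]
Symmetry of $P_T(t)-P_\infty$ then yields \eqref{Riccati-convergence} after renaming $\gamma$.
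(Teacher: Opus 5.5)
Your proof is correct, but at the decisive step, the exponential decay of the terminal state $\dbE|\bar X_s(s)|^2$, it takes a different route from the paper.

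\textbf{The paper's route.} The paper compares first-order conditions in the common space $\mathcal H_{t,T}$. The finite-horizon optimum satisfies $\mathfrak M_{t,T}u_T+h_{t,T}(x)=0$. By dynamic programming, the restriction $u_\infty$ of the infinite-horizon optimum minimizes $J^0_{t,T}(x;u)+\dbE[X(T)^\top P_\infty X(T)]$, so $u_T-u_\infty=\mathfrak M_{t,T}^{-1}\widehat{\mathcal L}_{t,T}^*(P_\infty X_\infty(T))$. The decay of $X_\infty(T)$ under the stabilizing feedback $K_\infty$ therefore passes to $u_T-u_\infty$, and then to $X_T(T)=X_\infty(T)+\widehat{\mathcal L}_{t,T}(u_T-u_\infty)$. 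Both value comparisons are then read off from the same dynamic-programming identity with terminal weight $P_\infty$.

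\textbf{Your route.} You prove directly, uniformly in the horizon, that the finite-horizon optimal trajectories themselves decay exponentially. Set $f(\tau)=\dbE|\bar X_s(\tau)|^2$. Time consistency plus uniform convexity give $\int_r^s f\le C_2 f(r)$ and $f(\tau)\le C_1 f(r)$, hence $(\tau-r)f(\tau)\le C_1C_2 f(r)$, and iterating yields exponential decay. The infinite-horizon feedback is never used in this step. Your lower bound uses extension by zero, which is legitimate since $[A,C]$ is stable, instead of the terminal weight $P_\infty$. Your uniform bound on $P_T$ comes straight from the operator formula, not as a by-product of \eqref{Riccati-convergence}.

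\textbf{Trade-offs.} The paper's argument is shorter and also gives the control estimate $\|u_T-u_\infty\|_{\mathcal H_{t,T}}\le\cC e^{-\gamma(T-t)}|x|$. Yours needs the dynamic programming principle and the uniform bounds, including Lemma~\ref{lem:state-estimate}, for random initial states $\xi\in L^2_{\mathcal F_r}(\Omega;\dbR^n)$. This is standard here, e.g.\ via the closed-loop representation $\bar u_s=-K_s(\cdot)\bar X_s$ and the independence of the fundamental solution on $[r,s]$ from $\mathcal F_r$, but you should state it explicitly. In return, your argument shows the stronger structural fact that the finite-horizon optimal closed loops are exponentially stable uniformly in $T$.
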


\begin{proof}
%Since $[A,C]$ is $L^2$-stable, Proposition~\ref{prop:JC-UC} and \eqref{UC} imply \eqref{JC}.
%Thus Proposition~\ref{prop:strong-regularity} and
%Corollary~\ref{cor:JC-Damm} ensure the existence of
%$P_T$ and the stabilizing solution $P_\infty$.
By Proposition~\ref{prop:JC-UC}, the assumed horizon-independent
	lower bound implies \eqref{JC} with a constant $\delta>0$
	independent of $T$. We use this joint coercivity constant below.
	Proposition~\ref{prop:strong-regularity} and
	Corollary~\ref{cor:JC-Damm} then ensure the existence of $P_T$
	and the stabilizing solution $P_\infty$.

Fix $0\le t<T$, and let
\[
    \mathcal H_{t,T}=L^2_{\dbF}(t,T;\dbR^m), \qquad \|u\|_{\mathcal H_{t,T}}^2 =\dbE\int_t^T|u(s)|^2ds .
\]
Consider the following homogeneous controlled state equation
\begin{equation}\label{state-finite}
    \left\{
    \begin{aligned}
    &dX(s) = \bigl(AX(s)+Bu(s)\bigr)ds + \bigl(CX(s)+Du(s)\bigr)dW(s),\qquad s\in[t,T],\\
    &X(t)=x.
    \end{aligned}
    \right.
\end{equation}
Denote the corresponding finite-horizon cost functional by
\[
    J_{t,T}^0(x;u)=\dbE\int_t^T\bigl[X(s)^\top QX(s)+2X(s)^\top S^\top u(s)+u(s)^\top Ru(s)\bigr]ds .
\]
By classical LQ theory
	\cite{Sun-Li-Yong2016},
	the corresponding value function is
	\[
	V_T^0(t,x):=\inf_{u\in\mathcal H_{t,T}}J_{t,T}^0(x;u)
	=x^\top P_T(t)x.
	\]

We denote the solution of the state equation \rf{state-finite} with initial pair $(t,x)$ and control $u$ by $X^{t,x;u}$.
Define the zero-initial-state trajectory operator and the terminal-state operator by
\[
    \begin{aligned}
    \mathcal L_{t,T}u&=X^{t,0;u}(\cdot),\q &\mathcal L_{t,T}&:\mathcal H_{t,T} \longrightarrow L^2_{\dbF}(t,T;\dbR^n),\\
    \widehat{\mathcal L}_{t,T}u&=X^{t,0;u}(T),\q &\widehat{\mathcal L}_{t,T}&:\mathcal H_{t,T} \longrightarrow L^2_{\mathcal F_T}(\Omega;\dbR^n).
    \end{aligned}
\]
By Lemma~\ref{lem:state-estimate}, there exists a constant $\cC>0$, independent of the time horizon $[t,T]$, such that
\begin{equation}
    \label{operator-bounds}
    \|\mathcal L_{t,T}\|+\|\widehat{\mathcal L}_{t,T}\|\le \cC.
\end{equation}

By linearity and uniqueness of the state equation,
$X^{t,x;u}=X^{t,x;0}+\mathcal L_{t,T}u$.
Thus the initial state contributes only to the affine terms below, whereas the quadratic part in $u$ is exactly the part determined by the zero-initial-state operator.
Pulling the state terms back to the control space, we can write $J_{t,T}^0(x;\cdot)$ uniquely as
\begin{equation}
    \label{control-quadratic}
    J_{t,T}^0(x;u) =\langle\mathfrak M_{t,T}u,u\rangle +2\langle h_{t,T}(x),u\rangle+c_{t,T}(x),
\end{equation}
where $\mathfrak M_{t,T}$ is a bounded self-adjoint operator and
\begin{equation*}
    \mathfrak M_{t,T} =\mathcal L_{t,T}^*Q\mathcal L_{t,T} +\mathcal L_{t,T}^*S^\top +S\mathcal L_{t,T}+R.
\end{equation*}
By time homogeneity and the joint coercivity condition
\eqref{JC} established above, we have
\begin{equation}
    \label{operator-positivity}
    \mathfrak M_{t,T}\ge\delta I, \qquad \|\mathfrak M_{t,T}^{-1}\|\le\delta^{-1}.
\end{equation}
Consequently, the finite-horizon problem with zero terminal cost admits a unique optimal control $u_T$.
Denote its optimal state by $X_T:=X^{t,x;u_T}$.
The first-order optimality condition is
\begin{equation}
    \label{finite-optimality}
    \mathfrak M_{t,T}u_T+h_{t,T}(x)=0.
\end{equation}

We next compare this problem with the infinite-horizon problem on the same control space.
Let $u_\infty=-K_\infty X_\infty$ be the restriction to $[t,T]$ of the infinite-horizon optimal control.
By the dynamic programming principle for the infinite-horizon problem and the identity $V_\infty^0(x)=x^\top P_\infty x$,
\begin{equation}
    \label{dynamic-programming}
    x^\top P_\infty x =\inf_{u\in\mathcal H_{t,T}} \left\{ J_{t,T}^0(x;u) +\dbE\bigl[X(T)^\top P_\infty X(T)\bigr] \right\},
\end{equation}
and the minimizer is precisely $u_\infty$.
Taking the Fr\'echet derivative of the terminal quadratic term $\dbE[X(T)^\top P_\infty X(T)]$ in \eqref{dynamic-programming}, we obtain, for every $v\in\mathcal H_{t,T}$,
\begin{equation*}
    \begin{aligned}
    D_u\!\left\{ \dbE\bigl[X(T)^\top P_\infty X(T)\bigr] \right\}(u_\infty)[v]
    %&=\left.\frac{d}{d\varepsilon}\right|_{\varepsilon=0} \dbE\!\left[\bigl(X_\infty(T)+\varepsilon\widehat{\mathcal L}_{t,T}v\bigr)^\top
    %P_\infty\bigl(X_\infty(T)+\varepsilon\widehat{\mathcal L}_{t,T}v\bigr)\right]\\
    =2\dbE\!\left[X_\infty(T)^\top P_\infty\widehat{\mathcal L}_{t,T}v\right]
    %&&\text{(by symmetry of $P_\infty$)}\\
    %&=2\left\langle P_\infty X_\infty(T),\widehat{\mathcal L}_{t,T}v\right\rangle_{L^2_{\mathcal F_T}} &&\text{(by the terminal-space inner product)}\\
    =2\left\langle \widehat{\mathcal L}_{t,T}^*\bigl(P_\infty X_\infty(T)\bigr),v \right\rangle_{\mathcal H_{t,T}}.
    %&&\text{(by the definition of the adjoint)}.
    \end{aligned}
\end{equation*}
On the other hand, \eqref{control-quadratic} gives the derivative of the running-cost term $J_{t,T}^0(x;u)$:
\[
    D_uJ_{t,T}^0(x;u_\infty)[v] =2\langle\mathfrak M_{t,T}u_\infty+h_{t,T}(x),v\rangle_{\mathcal H_{t,T}}, \qquad v\in\mathcal H_{t,T}.
\]
Therefore, the minimizer $u_\infty$ satisfies the  first-order optimality condition for \eqref{dynamic-programming}
\begin{equation}
    \label{infinite-optimality}
    \mathfrak M_{t,T}u_\infty+h_{t,T}(x) +\widehat{\mathcal L}_{t,T}^* \bigl(P_\infty X_\infty(T)\bigr)=0.
\end{equation}
%Subtracting \eqref{finite-optimality} from
%\eqref{infinite-optimality} gives the identity and norm estimates
Subtracting \eqref{finite-optimality} from
	\eqref{infinite-optimality} and using \eqref{operator-positivity},
	we obtain
\begin{equation}
    \label{control-error}
    \begin{aligned}
    u_T-u_\infty &= \mathfrak M_{t,T}^{-1} \widehat{\mathcal L}_{t,T}^* \bigl(P_\infty X_\infty(T)\bigr),\\
    %\|u_T-u_\infty\|_{\mathcal H_{t,T}} &\le \|\mathfrak M_{t,T}^{-1}\| \|\widehat{\mathcal L}_{t,T}\| \|P_\infty\| \bigl(\dbE|X_\infty(T)|^2\bigr)^{1/2}\\
    \|u_T-u_\infty\|_{\mathcal H_{t,T}} &\le \delta^{-1}\|\widehat{\mathcal L}_{t,T}\| \|P_\infty\| \bigl(\dbE|X_\infty(T)|^2\bigr)^{1/2}.
    %&&\text{(by \eqref{operator-positivity})}.
    \end{aligned}
\end{equation}
By Corollary~\ref{cor:JC-Damm}, the closed-loop system $[A-BK_\infty,C-DK_\infty]$ is $L^2$-stable. Applying Corollary~\ref{cor:exponential-stability} to this closed-loop system with initial time $t$ and terminal time $T$, we obtain
\begin{equation}
    \label{state-decay}
    \dbE|X_\infty(T)|^2 \le \cC e^{-2\gamma(T-t)}|x|^2.
\end{equation}
It follows from \eqref{control-error} that
\begin{equation}
    \label{error-decay}
    \|u_T-u_\infty\|_{\mathcal H_{t,T}} \le \cC e^{-\gamma(T-t)}|x|.
\end{equation}
The two optimal states have the same initial condition, and therefore
\begin{equation*}
    X_T(T)-X_\infty(T) =\widehat{\mathcal L}_{t,T}(u_T-u_\infty).
\end{equation*}
Combining \eqref{operator-bounds}, \eqref{state-decay}, and \eqref{error-decay}, we obtain
\begin{equation}
    \label{terminal-decay}
    \dbE|X_T(T)|^2 \le \cC e^{-2\gamma(T-t)}|x|^2.
\end{equation}

It remains to compare the two value functions.
For the first comparison, take the infinite-horizon optimal control on $[t,\infty)$ and restrict it to $[t,T]$.
We continue to denote this restriction by $u_\infty$.
It is admissible for the finite-horizon problem with zero terminal cost.
Therefore,
\begin{equation*}
    \begin{aligned}
    V_T^0(t,x)-V_\infty^0(x) &\le J_{t,T}^0(x;u_\infty)-V_\infty^0(x)
    %&&\text{(by finite-horizon optimality)}\\
    = -\dbE\bigl[X_\infty(T)^\top P_\infty X_\infty(T)\bigr]
    %&&\text{(by \eqref{dynamic-programming})}\\
    \le \cC e^{-2\gamma(T-t)}|x|^2.
    %&&\text{(by \eqref{state-decay})}.
    \end{aligned}
\end{equation*}
%For the reverse comparison, the dynamic programming identity \eqref{dynamic-programming} reads
%\[
%    x^\top P_\infty x =\inf_{u\in\mathcal H_{t,T}} \left\{ J_{t,T}^0(x;u) +\dbE\bigl[X(T)^\top P_\infty X(T)\bigr] \right\}.
%\]
%Taking the finite-horizon optimal control $u_T$ as a competitor on its right-hand side and using %$J_{t,T}^0(x;u_T)=V_T^0(t,x)$, we obtain
Conversely, using $u_T$ as a competitor in \eqref{dynamic-programming} and the identity $J_{t,T}^0(x;u_T)=V_T^0(t,x)$, we obtain
\[
    \begin{aligned}
    V_\infty^0(x) &\le J_{t,T}^0(x;u_T) +\dbE\bigl[X_T(T)^\top P_\infty X_T(T)\bigr]
    =V_T^0(t,x) +\dbE\bigl[X_T(T)^\top P_\infty X_T(T)\bigr].
    %&&\text{(by finite-horizon optimality)}.
    \end{aligned}
\]
Using \eqref{terminal-decay}, we obtain
\begin{equation*}
	\begin{aligned}
		V_\infty^0(x)-V_T^0(t,x) &\le \dbE\bigl[X_T(T)^\top P_\infty X_T(T)\bigr]
		\le \cC e^{-2\gamma(T-t)}|x|^2.
		%&&\text{(by \eqref{terminal-decay})}.
	\end{aligned}
\end{equation*}
Both estimates use only $|z^\top P_\infty z|\le\|P_\infty\||z|^2$ and do not use any nonnegativity property of $P_T(t)$ or $P_\infty$.
Thus,
\[
    |x^\top(P_T(t)-P_\infty)x| \le \cC e^{-2\gamma(T-t)}|x|^2.
\]
Since $P_T(t)-P_\infty$ is a real symmetric matrix, taking the supremum over $|x|=1$ and relabeling the exponent gives \eqref{Riccati-convergence}.
The same estimate also yields the uniform boundedness of $P_T(t)$.
\end{proof}

\begin{proposition}\label{prop:riccati-convergence}
	%Suppose that \ref{ass:L2-stabilizable} and \ref{ass:JC} hold.
		%Then there exist constants $\cC,\gamma>0$, independent of $T$ and $t$, such that
	Suppose that \ref{ass:L2-stabilizable} and \ref{ass:JC} hold.
		Let $P_T$ be the unique strongly regular solution of \eqref{Riccati-T},
		and let $P_\infty$ be the unique stabilizing solution of \eqref{ARE}.
		Then there exist constants $\cC,\gamma>0$, independent of $T$ and $t$,
		such that
	%Then there exist constants $\cC,\gamma>0$, independent of $T$ and $t$, such that
	\begin{equation}\label{JC-convergence}
		\|P_T(t)-P_\infty\|\le\cC e^{-\gamma(T-t)},\qquad 0\le t\le T.
	\end{equation}
\end{proposition}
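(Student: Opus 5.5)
The plan is to reduce to Lemma~\ref{lem:riccati-convergence} by a feedback transformation that converts the merely stabilizable system into an $L^2$-stable one, as announced in the introduction. Using \ref{ass:L2-stabilizable}, fix a stabilizer $\Theta$, so that $[A+B\Theta,C+D\Theta]$ is $L^2$-stable. For any $u\in L^2_{\dbF}(0,T;\dbR^m)$, write $u=\Theta X+v$. Then $X$ solves the system $[\widetilde A,\widetilde C;B,D]$ with $\widetilde A=A+B\Theta$ and $\widetilde C=C+D\Theta$, driven by $v$. The cost becomes a quadratic functional in $(X,v)$ with weights
\[
\widetilde Q=Q+S^\top\Theta+\Theta^\top S+\Theta^\top R\Theta,\qquad \widetilde S=S+R\Theta,\qquad \widetilde R=R.
\]
The map $u\leftrightarrow v$ is a bijection of $L^2_{\dbF}(0,T;\dbR^m)$ onto itself, since $X$ is determined by either one and has finite second moment on a finite horizon. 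Hence $\widetilde J_T^0(0;v)=J_T^0(0;u)$.

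The key step is to check that the hypotheses of Lemma~\ref{lem:riccati-convergence} hold for the transformed problem. \emph{Stability:} the new uncontrolled system $[\widetilde A,\widetilde C]$ is $L^2$-stable by the choice of $\Theta$. \emph{Uniform convexity with a $T$-independent constant:} by \ref{ass:JC},
\[
\widetilde J_T^0(0;v)=J_T^0(0;u)\ge\delta\,\dbE\int_0^T\bigl(|X|^2+|u|^2\bigr)dt.
\]
Since $v=u-\Theta X$, we have $|v|^2\le 2|u|^2+2\|\Theta\|^2|X|^2$. Therefore
\[
\widetilde J_T^0(0;v)\ge\frac{\delta}{2(1+\|\Theta\|^2)}\,\dbE\int_0^T|v|^2dt,
\]
which is \eqref{UC} with $\delta_0=\delta/(2(1+\|\Theta\|^2))$, independent of $T$.

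Next I would identify the Riccati objects for the two problems. On each $[t,T]$, the value functions coincide, $\widetilde V_T^0(t,x)=V_T^0(t,x)=x^\top P_T(t)x$, because the feedback change preserves both the admissible class and the cost. By uniqueness of strongly regular solutions (Proposition~\ref{prop:strong-regularity} and the value-function representation), the transformed Riccati solution $\widetilde P_T$ equals $P_T$. One can also see this algebraically: substituting into \eqref{Riccati-T}, the term $\widetilde N=N+M\Theta$ yields the same equation, and $\widetilde M=R+D^\top PD=M$.

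Similarly, the transformed ARE has stabilizing solution $P_\infty$. The closed-loop gain satisfies $\widetilde K_\infty=K_\infty+\Theta$, so $\widetilde A-B\widetilde K_\infty=A-BK_\infty$ and likewise for the diffusion coefficient, and the ARE is invariant under this substitution. Uniqueness in Theorem~\ref{thm:Damm} then gives $\widetilde P_\infty=P_\infty$.

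Applying Lemma~\ref{lem:riccati-convergence} to the transformed problem gives $\|P_T(t)-P_\infty\|=\|\widetilde P_T(t)-\widetilde P_\infty\|\le\cC e^{-\gamma(T-t)}$, with constants that depend only on the data and $\Theta$, not on $T$.

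I expect the main obstacle to be the careful verification that the Riccati solutions are invariant under the feedback transformation. This requires the algebraic identity showing that \eqref{Riccati-T} and \eqref{ARE} with tilde data reduce to the original equations, together with matching the stabilizing property and strong regularity. I would do this by a direct computation: expand $\widetilde P\widetilde A+\widetilde A^\top\widetilde P+\widetilde C^\top\widetilde P\widetilde C+\widetilde Q$ and subtract $\widetilde N^\top M^{-1}\widetilde N$. The $\Theta$-terms should cancel via the completed square $(N+M\Theta)^\top M^{-1}(N+M\Theta)=N^\top M^{-1}N+\Theta^\top N+N^\top\Theta+\Theta^\top M\Theta$.
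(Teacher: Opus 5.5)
Your proposal is correct and follows the paper's proof: fix a stabilizer $\Theta$, pass to $v=u-\Theta X$, deduce uniform convexity of the transformed cost with a $T$-independent constant from \ref{ass:JC}, identify the transformed Riccati solutions with $P_T(t)$ and $P_\infty$, and apply Lemma~\ref{lem:riccati-convergence}. The one difference is the identification step. You check directly that the differential and algebraic Riccati equations are invariant under $\widetilde N=N+M\Theta$, with $\widetilde K_\infty=K_\infty+\Theta$ giving the same closed loop, and then invoke uniqueness; the paper instead uses equality of value functions, Sun--Yong's bijection of admissible controls on the infinite horizon, and time homogeneity. Your algebraic route is self-contained and does not need that citation.
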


\begin{proof}
	By stabilizability, there exists a stabilizing feedback matrix $\Theta$.
	Set $A_\Theta=A+B\Theta$ and $C_\Theta=C+D\Theta$.
	We denote the	transformed finite-horizon cost by
	$J_{\Theta,T}^0(x;v)$, its value matrix by	$P_{\Theta,T}(t)$, and the infinite-horizon	value matrix by $P_{\Theta,\infty}$.
	Under the change of control $u(t)=\Theta X(t)+v(t)$, the state equation~\eqref{state-finite} and the cost coefficients become
	\begin{equation}\label{state-Theta-finite}
		\left\{
		\begin{aligned}
			&dX(t)=\bigl(A_\Theta X(t)+Bv(t)\bigr)\,dt
			+\bigl(C_\Theta X(t)+Dv(t)\bigr)\,dW(t),
			\qquad 0\le t\le T,\\
			&X(0)=x.
		\end{aligned}
		\right.
	\end{equation}
	\begin{equation}\label{transformed-cost-coefficients}
		\begin{aligned}
			Q_\Theta=Q+S^\top\Theta+\Theta^\top S+\Theta^\top R\Theta,\qquad
			S_\Theta=S+R\Theta,\qquad R_\Theta=R.
		\end{aligned}
	\end{equation}
	For each fixed $T$, the transformations $u(t)=\Theta X(t)+v(t)$ and $v(t)=u(t)-\Theta X(t)$ are mutually inverse and preserve the initial state, the state trajectory, and the cost.
	The assumption \ref{ass:JC}, together with
$	|v(t)|^2\le(1+\|\Theta\|^2)(|X(t)|^2+|u(t)|^2),$	yields
	
		\begin{equation}\label{transformed-UC-finite}
		J_{\Theta,T}^0(0;v)\ge
		\frac{\delta}{1+\|\Theta\|^2}\dbE\int_0^T|v(t)|^2dt.
	\end{equation}

	On the infinite horizon, by Sun--Yong~\cite[Proposition~3.3]{SunYong2018}, the transformation $u(t)=\Theta X(t)+v(t)$ defines a bijection from
	$L^2_{\dbF}(0,\infty;\dbR^m)$ onto $\mathcal U_{\mathrm{ad}}(x)$, with inverse $v(t)=u(t)-\Theta X(t)$.
	Thus, on both finite and infinite horizons, corresponding controls yield the same state trajectory and the same cost.
	The value functions therefore coincide.
	Their quadratic representations give $P_{\Theta,T}(0)=P_T(0)$ and $P_{\Theta,\infty}=P_\infty$.
	Moreover, time homogeneity yields $P_{\Theta,T}(t)=P_{\Theta,T-t}(0)=P_{T-t}(0)=P_T(t)$.
	The value matrices before and after the transformation satisfy
	\begin{equation*}
		P_{\Theta,T}(t)=P_T(t),\qquad P_{\Theta,\infty}=P_\infty,
		\qquad 0\le t\le T.
	\end{equation*}
Since $[A_\Theta,C_\Theta]$ is $L^2$-stable and
\eqref{transformed-UC-finite} holds with a constant
independent of $T$, Lemma~\ref{lem:riccati-convergence}
applies to \eqref{state-Theta-finite} with the cost
coefficients \eqref{transformed-cost-coefficients}.
Using $P_{\Theta,T}(t)=P_T(t)$ and
$P_{\Theta,\infty}=P_\infty$, we obtain
\eqref{JC-convergence}.
\end{proof}

The above implies that the $JC$ condition \ref{ass:JC} implies the
convergence estimate \rf{JC-convergence} of Riccati equations.
This convergence estimate will play the most important role
in the study of the turnpike property.
Next, we prove the converse result; that is the  convergence of Riccati equations implies  the $JC$ condition \ref{ass:JC} holds.

\begin{theorem}\label{Thm-equiv}
	Consider the finite-horizon Riccati equation \eqref{Riccati-T}, with the notation $M_T,N_T,K_T$ in Proposition~\ref{prop:exploratory-solution}.
	Suppose that the following conditions hold:
	\begin{enumerate}[(i)]
		\item For every $T>0$, the  Riccati equation \eqref{Riccati-T} admits a strongly regular solution
		$P_T\in C([0,T];\dbS^n)$. More precisely, there exists $\mu_T>0$, such that
		\[
		M_T(t)=R+D^\top P_T(t)D\ge\mu_T I_m,
		\qquad 0\le t\le T.
		\]
		\item There exists $P_\infty\in\dbS^n$ such that
		\[
		\lim_{T\to\infty}\|P_T(0)-P_\infty\|=0,
		\]
		and  $[A-BK_\infty,C-DK_\infty]$ is $L^2$-stable, where
      $$ M_\infty:=R+D^\top P_\infty D>0,\qq
		K_\infty=M_\infty^{-1}
		(B^\top P_\infty+D^\top P_\infty C+S).$$
	\end{enumerate}
	Then $P_\infty$ is the unique stabilizing solution of the algebraic Riccati equation \eqref{ARE}, and there exists $\delta>0$, independent of $T$, such that
	\[
	J_T^0(0;u)\ge\delta\dbE\int_0^T
	\bigl(|X(t)|^2+|u(t)|^2\bigr)dt,
	\qquad T>0,\quad u\in L^2_{\dbF}(0,T;\dbR^m).
	\]
	That is, \ref{ass:JC} holds.

\end{theorem}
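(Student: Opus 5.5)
The plan has two parts. First I show that $P_\infty$ is a stabilizing solution of \eqref{ARE} and that it is unique. Then I prove \ref{ass:JC} by completing squares, using $P_\infty$ on most of the horizon and $P_T$ on a final window of fixed length.

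\emph{Step 1: $P_\infty$ solves \eqref{ARE}.} By time homogeneity, $P_T(t)=P_{T-t}(0)$. So $\Pi(s):=P_s(0)$ satisfies the autonomous forward equation $\dot\Pi(s)=\mathcal R(\Pi(s))$, where $\mathcal R(P)$ denotes the right-hand side of \eqref{ARE}. The map $\mathcal R$ is continuous near $P_\infty$ because $M_\infty>0$. Since $\Pi(s)\to P_\infty$, we get $\dot\Pi(s)\to\mathcal R(P_\infty)$. If $\mathcal R(P_\infty)\neq0$, then $\Pi$ would grow linearly in some direction, contradicting convergence. Hence $\mathcal R(P_\infty)=0$, and (ii) says $P_\infty$ is stabilizing. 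For uniqueness, let $P_1,P_2$ be two stabilizing solutions. Subtracting the two equations shows that $\Delta=P_1-P_2$ satisfies a homogeneous Lyapunov-type equation driven by the stable closed-loop pair $[A-BK_1,C-DK_1]$. I expect the cross term to be $\pm(K_1-K_2)^\top M_2(K_1-K_2)$, so the sign-definite comparison argument gives $\Delta\ge0$ and, symmetrically, $\Delta\le0$. Hence $\Delta=0$.

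A uniform bound on the weights is also needed. The function $s\mapsto\lambda_{\min}(R+D^\top\Pi(s)D)$ is continuous and positive by (i), and it tends to $\lambda_{\min}(M_\infty)>0$. Therefore $M_T(t)\ge\mu I_m$ for all $0\le t\le T$ and all $T$, with one $\mu>0$. Also $\Pi$ is bounded, so $\sup_{T,t}\|K_T(t)\|<\infty$.

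\emph{Step 2: completion of squares with two feedbacks.} Fix $\varepsilon>0$ and choose $\tau$ with $\|P_\tau(0)-P_\infty\|\le\varepsilon$. Let $T>\tau$ and $X(0)=0$. Put $w=u+K_\infty X$ and $v=u+K_T X$. Itô's formula with $P_\infty$ on $[0,T-\tau]$ and with $P_T(\cdot)$ on $[T-\tau,T]$ (where $P_T(T)=0$), together with \eqref{ARE} and \eqref{Riccati-T}, gives
\[
J_T^0(0;u)=\dbE\int_0^{T-\tau}w^\top M_\infty w\,dt+\dbE\bigl[X(T-\tau)^\top(P_T(T-\tau)-P_\infty)X(T-\tau)\bigr]+\dbE\int_{T-\tau}^T v^\top M_T v\,dt .
\]
Here $P_T(T-\tau)=P_\tau(0)$. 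The state on $[0,T-\tau]$ is driven by $w$ through the stable pair $[A-BK_\infty,C-DK_\infty]$. Lemma~\ref{lem:state-estimate} applied to that pair gives $\dbE|X(T-\tau)|^2+\dbE\int_0^{T-\tau}|X|^2\le\cC\|w\|^2_{[0,T-\tau]}$, with $\cC$ independent of $T$. Hence
\[
J_T^0(0;u)\ge(\lambda_{\min}(M_\infty)-\varepsilon\cC)\|w\|^2_{[0,T-\tau]}+\mu\|v\|^2_{[T-\tau,T]},
\]
which is bounded below by $c(\|w\|^2_{[0,T-\tau]}+\|v\|^2_{[T-\tau,T]})$ once $\varepsilon$ is small.

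\emph{Step 3: recovering $X$ and $u$.} On $[0,T-\tau]$ we have $\dbE\int|X|^2\le\cC\|w\|^2$ and $|u|\le|w|+\|K_\infty\||X|$. On the window $[T-\tau,T]$ of fixed length $\tau$, the state solves the closed loop with the uniformly bounded feedback $K_T$, driven by $v$. A Gronwall estimate on this fixed-length interval gives $\dbE\int_{T-\tau}^T|X|^2\le\cC_\tau(\dbE|X(T-\tau)|^2+\|v\|^2)\le\cC_\tau(\|w\|^2_{[0,T-\tau]}+\|v\|^2_{[T-\tau,T]})$, and $u=v-K_TX$ is controlled in the same way. For $T\le\tau$ there is only the window: $J_T^0(0;u)\ge\mu\|v\|^2$, and the same Gronwall bound applies with $X(0)=0$. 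Combining the two ranges of $T$ yields \eqref{JC} with a $\delta$ that does not depend on $T$.

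The main obstacle is Step 2. A direct completion of squares with $P_T$ alone gives only $J_T\ge\mu\|v\|^2$. To pass from $v$ to $(X,u)$ one would then need uniform stability of the time-varying closed loop with gain $K_T$, and no convergence rate for $K_T$ is available. Splitting off a fixed terminal window and using $P_\infty$ before it is meant to avoid this. The terminal mismatch term, which is the only indefinite piece, is absorbed because $\|P_\tau(0)-P_\infty\|$ is small.
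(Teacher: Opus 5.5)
Your proof is correct, but on the bulk of the horizon it takes a different route from the paper's. The paper completes the square with $P_T$ over all of $[0,T]$. This gives the exact identity $J_T^0(0;u)=\dbE\int_0^T v^\top M_T v\,dt\ge\mu\|v\|^2$. It then recovers $X$ on $[0,T-h]$ by writing $w=v+(K_\infty-K_T)X$ and absorbing the perturbation through Lemma~\ref{lem:state-estimate} for the $K_\infty$-loop, using only $\sup_{s\ge h}\|K(s)-K_\infty\|\to0$. A Gronwall bound handles the terminal window, as in your Step 3.

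You instead splice a $P_\infty$-completion on $[0,T-\tau]$ with a $P_T$-completion on $[T-\tau,T]$. This controls $\|w\|$ directly and moves all the smallness into the single mismatch term $\dbE[X(T-\tau)^\top(\Pi(\tau)-P_\infty)X(T-\tau)]$, which you absorb via the terminal-state part of Lemma~\ref{lem:state-estimate}.

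What each route buys:
\begin{itemize}
\item For the coercivity estimate itself, you only need smallness of $\Pi(\tau)-P_\infty$ at one time $\tau$, and bounds on $M(s)^{-1}$ and $K(s)$ only for $s\in[0,\tau]$.
\item In exchange, your estimate genuinely requires that $P_\infty$ solve \eqref{ARE}, so your Step~1 is load-bearing.
\item The paper's coercivity argument uses only that $K_\infty$ is stabilizing and that $K(s)\to K_\infty$, and its cost identity has no indefinite remainder.
\end{itemize}

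Your closing remark about the ``main obstacle'' is inaccurate. Completing the square with $P_T$ alone does not require any convergence rate for $K_T$. The small-gain absorption needs only uniform smallness of $K_T(t)-K_\infty$ for $T-t\ge h$, and that follows from plain convergence of $\Pi$.

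Your Lyapunov comparison for uniqueness is valid. The remainder term $(K_1-K_2)^\top M_2(K_1-K_2)\ge0$ together with stability of the $K_1$-loop gives $P_1\ge P_2$, and the symmetric argument gives the reverse. The paper does not argue this inside the proof; it leaves uniqueness to the verification argument of Theorem~\ref{thm:Damm}, which identifies every stabilizing solution with the value matrix.
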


\begin{proof}

	We first establish a uniform positive lower bound for $M_T(t)$.
	Since the coefficients are constant and $P_T(T)=0$ for every $T>0$, time reversal transforms the finite-horizon Riccati equations into the same initial-value problem; see \cite{SunWangYong2022,SunYong2018}.
	By uniqueness, there exists a function $\Pi\in C^1([0,\infty);\dbS^n)$ such that
	\[
	P_T(t)=\Pi(T-t),\qquad 0\le t\le T,\qquad \Pi(0)=0.
	\]
	In particular, $\Pi(T)=P_T(0)$, so the assumed convergence implies that $\Pi(s)\to P_\infty$ as $s\to\infty$.
	Define
	\[
	M(s)=R+D^\top\Pi(s)D,\qquad N(s)=B^\top\Pi(s)+D^\top\Pi(s)C+S,\qquad K(s)=M(s)^{-1}N(s).
	\]
	Then $M_T(t)=M(T-t)$ and $K_T(t)=K(T-t)$ for $0\le t\le T$.
	From the Riccati equation \eqref{Riccati-T}, we obtain the following time-reversed equation:
	\begin{equation}\label{riccati-reversed}
		\left\{
		\begin{aligned}
			&\Pi'(s)=\Pi(s)A+A^\top\Pi(s)+C^\top\Pi(s)C+Q
			-N(s)^\top M(s)^{-1}N(s),
			\qquad s\ge0,\\
			&\Pi(0)=0.
		\end{aligned}
		\right.
	\end{equation}
	Strong regularity on each finite horizon gives $M(s)>0$ for all $s\ge0$.
	Moreover, $M(s)\to M_\infty>0$ as $s\to\infty$, so there exists $s_0>0$ such that
	\[
	M(s)\ge\tfrac12\lambda_{\min}(M_\infty)I_m,
	\qquad s\ge s_0.
	\]
	Since $M(s)$ is continuous and positive definite on the compact interval $[0,s_0]$, we also have
	$\mu_0:=\min_{0\le s\le s_0}\lambda_{\min}(M(s))>0$.
	Combining these bounds and setting
	$\mu=\min\{\mu_0,\tfrac12\lambda_{\min}(M_\infty)\}>0$,
	we obtain
	\[
	M(s)\ge\mu I_m,\qquad
	\|M(s)^{-1}\|\le\mu^{-1},
	\qquad s\ge0.
	\]
	
	Since $\Pi(s)$ is continuous and convergent, $N(s)$ is bounded on $[0,\infty)$.
	The uniform bound on $M(s)^{-1}$ then implies that $K(s)$ is also bounded.
	Moreover, the definitions of $K(s)$ and $K_\infty$ give
	\begin{equation*}
		\begin{aligned}
			K(s)-K_\infty
			% &=M(s)^{-1}\bigl[M(s)K(s)-M(s)K_\infty\bigr]\\
			% &=M(s)^{-1}\Bigl[M(s)K(s)-M_\infty K_\infty
			% -(M(s)-M_\infty)K_\infty\Bigr]\\
			% &=M(s)^{-1}\Bigl[B^\top\Pi(s)-B^\top P_\infty
			% +D^\top\Pi(s)C-D^\top P_\infty C\\
			% &\qquad\qquad\quad
			% -\bigl(D^\top\Pi(s)D-D^\top P_\infty D\bigr)K_\infty\Bigr]\\
			% &=M(s)^{-1}\Bigl[B^\top(\Pi(s)-P_\infty)
			% +D^\top(\Pi(s)-P_\infty)C\\
			% &\qquad\qquad\quad
			% -D^\top(\Pi(s)-P_\infty)DK_\infty\Bigr]\\
			&=M(s)^{-1}\Bigl[B^\top(\Pi(s)-P_\infty)
			+D^\top(\Pi(s)-P_\infty)(C-DK_\infty)\Bigr].
		\end{aligned}
	\end{equation*}
	Using $\|M(s)^{-1}\|\le\mu^{-1}$, we obtain
	\begin{equation*}
		\|K(s)-K_\infty\|
		\le\cC\|\Pi(s)-P_\infty\|\longrightarrow0,
		\qquad s\to\infty,
	\end{equation*}
	where $\cC>0$ is independent of $s$.
	
	We next show that $P_\infty$ satisfies \eqref{ARE}.
	Since $\Pi(s)\to P_\infty$ and $M(s)\to M_\infty>0$, every term on the right-hand side of \eqref{riccati-reversed} converges.
	Hence $\Pi'(s)$ has a limit $L\in\dbS^n$.
	The identity
	\begin{equation*}
		\Pi(s+1)-\Pi(s)=\int_s^{s+1}\Pi'(r)\,dr
	\end{equation*}
	then implies $L=0$, since the left-hand side tends to zero and the right-hand side tends to $L$ as $s\to\infty$.
	Thus $\Pi'(s)\to0$, and passing to the limit in \eqref{riccati-reversed} yields \eqref{ARE} for $P_\infty$.

	Fix $T>0$ and $u\in L^2_{\dbF}(0,T;\dbR^m)$, and let $X$ be the corresponding solution of \eqref{state-homo} with $x=0$, namely,
	\begin{equation}\label{Riccati-zero-state}
		\left\{
		\begin{aligned}
			&dX(t)=(AX(t)+Bu(t))\,dt+(CX(t)+Du(t))\,dW(t),
			\qquad 0\le t\le T,\\
			&X(0)=0.
		\end{aligned}
		\right.
	\end{equation}
	Set
	\begin{equation}\label{Riccati-v}
		v(t)=u(t)+K_T(t)X(t),\qquad 0\le t\le T.
	\end{equation}
	The standard finite-horizon estimate gives
	$X\in L^2_{\dbF}(0,T;\dbR^n)$.
	Since $K_T$ is bounded on $[0,T]$, we also have
	$v\in L^2_{\dbF}(0,T;\dbR^m)$.
	Applying It\^o's formula to $X(t)^\top P_T(t)X(t)$,
	taking expectations, and using \eqref{Riccati-T}, we obtain
	\begin{equation*}
		\begin{aligned}
			\frac{d}{dt}\dbE[X(t)^\top P_T(t)X(t)]
			&=\dbE\bigl[
			X(t)^\top\bigl(\dot P_T(t)+A^\top P_T(t)
			+P_T(t)A+C^\top P_T(t)C\bigr)X(t)\\
			&\qquad
			+2u(t)^\top\bigl(B^\top P_T(t)+D^\top P_T(t)C\bigr)X(t)
			+u(t)^\top D^\top P_T(t)Du(t)
			\bigr]\\
			&=\dbE\bigl[
			X(t)^\top\bigl(K_T(t)^\top M_T(t)K_T(t)-Q\bigr)X(t)\\
			&\qquad
			+2u(t)^\top\bigl(M_T(t)K_T(t)-S\bigr)X(t)
			+u(t)^\top\bigl(M_T(t)-R\bigr)u(t)
			\bigr]\\
			% &={}\dbE\bigl[
			% (K_T(t)X(t))^\top M_T(t)(K_T(t)X(t))\\
			% &\qquad
			% +2u(t)^\top M_T(t)(K_T(t)X(t))
			% +u(t)^\top M_T(t)u(t)\\
			% &\qquad
			% -X(t)^\top QX(t)
			% -2u(t)^\top SX(t)-u(t)^\top Ru(t)
			% \bigr]\\
			&=\dbE\bigl[
			(u(t)+K_T(t)X(t))^\top M_T(t)(u(t)+K_T(t)X(t))\\
			&\qquad
			-X(t)^\top QX(t)-2u(t)^\top SX(t)-u(t)^\top Ru(t)
			\bigr]\\
			&=\dbE\bigl[
			v(t)^\top M_T(t)v(t)-X(t)^\top QX(t)
			-2u(t)^\top SX(t)-u(t)^\top Ru(t)
			\bigr].
		\end{aligned}
	\end{equation*}
	Integrating over $[0,T]$ gives
	\begin{equation*}
		\begin{aligned}
			&\dbE[X(T)^\top P_T(T)X(T)]-X(0)^\top P_T(0)X(0)\\
			&=\dbE\int_0^T\bigl[
			v(t)^\top M_T(t)v(t)-X(t)^\top QX(t)
			-2u(t)^\top SX(t)-u(t)^\top Ru(t)
			\bigr]\,dt.
		\end{aligned}
	\end{equation*}
	Since $X(0)=0$, $P_T(T)=0$, and $M_T(t)\ge\mu I_m$, it follows that
	\begin{equation}\label{Riccati-square}
		J_T^0(0;u)
		=\dbE\int_0^T v(t)^\top M_T(t)v(t)\,dt
		\ge\mu\dbE\int_0^T|v(t)|^2\,dt.
	\end{equation}
	
	Let $h>0$ be a constant to be specified later, independently of $T$,
	and set $t_0=\max\{T-h,0\}$.
	We first estimate the state on the terminal interval $[t_0,T]$,
	whose length is at most $h$.
	Substituting $u(t)=v(t)-K_T(t)X(t)$ into
	\eqref{Riccati-zero-state} and setting $\xi=X(t_0)$,
	we see that the same state process $X$ satisfies
	\begin{equation}\label{Riccati-terminal-state}
		\left\{
		\begin{aligned}
			&dX(t)={}[(A-BK_T(t))X(t)+Bv(t)]\,dt
			+[(C-DK_T(t))X(t)+Dv(t)]\,dW(t),
			\qquad t_0\le t\le T,\\
			&X(t_0)={}\xi.
		\end{aligned}
		\right.
	\end{equation}
	Applying It\^o's formula to $|X(t)|^2$ along the solution of
	\eqref{Riccati-terminal-state} and taking expectations,
	we obtain, for almost every $t\in[t_0,T]$,
	\begin{equation*}
		\begin{aligned}
			\frac{d}{dt}\dbE|X(t)|^2
			&=\dbE\Bigl[
			X(t)^\top\Bigl(
			(A-BK_T(t))^\top+(A-BK_T(t))
			+(C-DK_T(t))^\top(C-DK_T(t))
			\Bigr)X(t)\\
			&\qquad
			+v(t)^\top D^\top Dv(t)
			+2v(t)^\top\bigl(B^\top+D^\top(C-DK_T(t))\bigr)X(t)
			\Bigr]\\
			&\le a\dbE|X(t)|^2+b\dbE|v(t)|^2,
		\end{aligned}
	\end{equation*}
	where the last inequality follows from Young's inequality
	and the uniform boundedness of $K_T(t)=K(T-t)$,
	and the constants $a>0$ and $b\ge0$ are independent of $T$, $u$, and $h$.
	Gronwall's inequality gives, for $t_0\le t\le T$,
	\begin{equation*}
		\begin{aligned}
			\dbE|X(t)|^2
			&\le e^{a(t-t_0)}\dbE|X(t_0)|^2
			+b\dbE\int_{t_0}^t e^{a(t-s)}|v(s)|^2\,ds\\
			&\le e^{ah}\left[
			\dbE|X(t_0)|^2
			+b\dbE\int_{t_0}^T|v(s)|^2\,ds
			\right].
		\end{aligned}
	\end{equation*}
	Integrating over $[t_0,T]$ and using $T-t_0\le h$, we obtain
	\begin{equation}\label{Riccati-terminal-estimate}
		\dbE\int_{t_0}^T|X(t)|^2\,dt
		\le he^{ah}\left[
		\dbE|X(t_0)|^2
		+b\dbE\int_{t_0}^T|v(t)|^2\,dt
		\right].
	\end{equation}

	We next estimate the state on $[0,t_0]$.
	Set
	\[
	A_\infty=A-BK_\infty,\qquad C_\infty=C-DK_\infty,
	\qquad w(t)=u(t)+K_\infty X(t).
	\]
	Then the same state process $X$ as in \eqref{Riccati-zero-state}
	satisfies
	\begin{equation}\label{Riccati-input-state}
		\left\{
		\begin{aligned}
			&dX(t)=(A_\infty X(t)+Bw(t))\,dt
			+(C_\infty X(t)+Dw(t))\,dW(t),
			\qquad 0\le t\le T,\\
			&X(0)=0.
		\end{aligned}
		\right.
	\end{equation}
	Since $[A_\infty,C_\infty]$ is $L^2$-stable and
	$w\in L^2_{\dbF}(0,T;\dbR^m)$,
	Lemma~\ref{lem:state-estimate}, applied to
	\eqref{Riccati-input-state}, gives a constant $\cC>0$,
	independent of $T$, $u$, and $h$, such that
	\begin{equation}\label{Riccati-input-estimate}
		\dbE\int_0^s|X(t)|^2\,dt+\dbE|X(s)|^2
		\le\cC\dbE\int_0^s|w(t)|^2\,dt,
		\qquad 0\le s\le T.
	\end{equation}
	
	For $h\ge0$, define
	\[
	\varepsilon(h)=\sup_{s\ge h}\|K(s)-K_\infty\|.
	\]
	Since $K(s)\to K_\infty$ as $s\to\infty$,
	we have $\varepsilon(h)\to0$ as $h\to\infty$.
	Moreover, $K_T(t)=K(T-t)$ implies that
	\[
	\|K_T(t)-K_\infty\|\le\varepsilon(h),
	\qquad 0\le t\le T-h,\quad T>h.
	\]
	Choose $h>0$, independently of $T$ and $u$, sufficiently large that
	$2\cC\varepsilon(h)^2\le1/2$.
	If $T>h$, then $t_0=T-h$.
	By \eqref{Riccati-v}, we have
	\[
	w(t)=v(t)+(K_\infty-K_T(t))X(t),
	\qquad 0\le t\le t_0,
	\]
	and hence
	\[
	|w(t)|^2
	\le2|v(t)|^2+2\varepsilon(h)^2|X(t)|^2,
	\qquad 0\le t\le t_0.
	\]
	Taking $s=t_0$ in \eqref{Riccati-input-estimate}, we obtain
	\[
	\begin{aligned}
		\dbE\int_0^{t_0}|X(t)|^2\,dt+\dbE|X(t_0)|^2
		&\le2\cC\dbE\int_0^{t_0}|v(t)|^2\,dt
		+2\cC\varepsilon(h)^2\dbE\int_0^{t_0}|X(t)|^2\,dt\\
		&\le2\cC\dbE\int_0^{t_0}|v(t)|^2\,dt
		+\frac12\dbE\int_0^{t_0}|X(t)|^2\,dt.
	\end{aligned}
	\]
	Absorbing the last term into the left-hand side yields
	\begin{equation}\label{Riccati-front-estimate}
		\begin{aligned}
			\dbE\int_0^{t_0}|X(t)|^2\,dt
			\le4\cC\dbE\int_0^{t_0}|v(t)|^2\,dt,\qq
			\dbE|X(t_0)|^2
			\le2\cC\dbE\int_0^{t_0}|v(t)|^2\,dt.
		\end{aligned}
	\end{equation}
	If $T\le h$, then $t_0=0$, and
	\eqref{Riccati-front-estimate} remains valid because $X(0)=0$.

	Combining \eqref{Riccati-terminal-estimate} with
	\eqref{Riccati-front-estimate}, we obtain
	\begin{equation}\label{Riccati-energy-estimate}
		\dbE\int_0^T|X(t)|^2\,dt
		\le\cC\dbE\int_0^T|v(t)|^2\,dt,
	\end{equation}
	where $\cC>0$ is independent of $T$ and $u$ and is chosen
	large enough that $\|K_T(t)\|\le\cC$ for all $T>0$ and $0\le t\le T$.
	Since $u(t)=v(t)-K_T(t)X(t)$ by \eqref{Riccati-v}, we have
	\[
	|u(t)|^2
	\le2|v(t)|^2+2\cC^2|X(t)|^2,
	\qquad 0\le t\le T.
	\]
	Combining \eqref{Riccati-energy-estimate} and
	\eqref{Riccati-square}, we obtain
	\begin{equation*}
		\begin{aligned}
			\dbE\int_0^T\bigl(|X(t)|^2+|u(t)|^2\bigr)\,dt
			&\le\bigl[2+(1+2\cC^2)\cC\bigr]
			\dbE\int_0^T|v(t)|^2\,dt
			\le\frac{2+(1+2\cC^2)\cC}{\mu}\,J_T^0(0;u).
		\end{aligned}
	\end{equation*}
	Thus, taking $\delta=\mu/[2+(1+2\cC^2)\cC]>0$,
	we obtain \ref{ass:JC}.
\end{proof}

\begin{remark}
%The $JC$ condition \ref{ass:JC} implies that
%\begin{equation}\label{JC-infinite-11}
%    J_\infty^0(0;u)\geq\delta\dbE\int_0^\infty
%    \bigl(|X^{0,u}(t)|^2+|u(t)|^2\bigr)\,dt,
%    \qquad \forall u\in\mathcal U_{\mathrm{ad}}(0),
%\end{equation}
%which is precisely \rf{JC-infinite}, and that, for every \(T>0\), there exists a constant \(\delta_T>0\) such that
Assumption~\ref{ass:JC} implies \eqref{JC-infinite} and, for every
	$T>0$, a finite-horizon lower bound with a constant $\delta_T>0$:
\begin{equation}\label{UC-T1}
    J_T^0(0;u)\ge\delta_T\dbE\int_0^T\big(|X(t)|^2+|u(t)|^2\big)\,dt,
    \qquad T>0,\quad u\in L^2_{\dbF}(0,T;\dbR^m).
\end{equation}
Under \eqref{JC-infinite} and \eqref{UC-T1}, the Riccati equations associated with both the finite-horizon and infinite-horizon problems are solvable. However, \eqref{JC-infinite} and \eqref{UC-T1} do not imply that \ref{ass:JC} holds. Consequently, the corresponding Riccati equations need not converge to the stabilizing solution. By Theorem \ref{Thm-equiv}, Example \ref{example1} is readily seen to provide a counterexample.
%Under \rf{JC-infinite-11}--\rf{UC-T1}, the Riccati equations associated with both the finite-horizon and infinite-horizon %problems are solvable. However, \rf{JC-infinite-11}--\rf{UC-T1} do not imply that \ref{ass:JC} holds. Consequently, the %corresponding Riccati equations need not converge to the stabilizing solution. By Theorem \ref{Thm-equiv}, Example %\ref{example1} is readily seen to provide a counterexample.
\end{remark}

\begin{remark}
By Remark~\ref{remark2.10}, \eqref{UC-T1} and \eqref{UC} are
equivalent on each fixed horizon.
As shown in Sun--Li--Yong \cite{Sun-Li-Yong2016}, \rf{UC} is almost necessary for the solvability of the Riccati equation \rf{Riccati-T}. We establish the convergence result in Proposition \ref{prop:riccati-convergence} under the $JC$ condition \ref{ass:JC}, which is slightly stronger than the uniform convexity condition \rf{UC}. Furthermore, by Theorem \ref{Thm-equiv}, the $JC$ condition \ref{ass:JC} is necessary for the convergence of Riccati equations. Thus, the $JC$ condition \ref{ass:JC} is sharp.
\end{remark}

For the remainder of this section, suppose that
Assumptions~\ref{ass:L2-stabilizable} and \ref{ass:JC} hold.
%Let
%\begin{equation*}
%\begin{aligned}
%& M_T(t)=R+D^\top P_T(t)D, \qquad N_T(t)=B^\top P_T(t)+D^\top P_T(t)C+S,\\
%& M_\infty=R+D^\top P_\infty D, \qquad N_\infty=B^\top P_\infty+D^\top P_\infty C+S.
%\end{aligned}
%\end{equation*}
%For every $T>0$, Proposition~\ref{prop:strong-regularity} gives $M_T(t)\ge\delta I_m$ for all $0\le t\le T$.
%Taking $t=0$, letting $T\to\infty$, and using \eqref{JC-convergence} in Proposition~\ref{prop:riccati-convergence}, we obtain %$M_\infty\ge\delta I_m$.
%Therefore, $M_T(t)\ge\delta I_m$ for $0\le t\le T$, and $M_\infty\ge\delta I_m$.
%The finite-horizon and infinite-horizon feedback gains are defined by
%\begin{equation*}
%    K_T(t)=M_T(t)^{-1}N_T(t), \qquad K_\infty=M_\infty^{-1}N_\infty.
%\end{equation*}
Recall the finite-horizon coefficients $M_T,N_T,K_T$ from
	Proposition~\ref{prop:exploratory-solution} and the limiting
	coefficients $M_\infty,N_\infty,K_\infty$ from \eqref{gain}.
	For every $T>0$, Proposition~\ref{prop:strong-regularity} gives
	$M_T(t)\ge\delta I_m$ for all $0\le t\le T$.
	Taking $t=0$, letting $T\to\infty$, and using
	\eqref{JC-convergence} in Proposition~\ref{prop:riccati-convergence},
	we obtain $M_\infty\ge\delta I_m$.
The following corollary follows  from standard matrix-norm estimates.

\begin{corollary}
There exist constants $\cC,\gamma>0$, independent of $T$, such that
\begin{equation}
    \label{coefficient-decay}
    \begin{aligned}
    & \|M_T(t)-M_\infty\|+\|N_T(t)-N_\infty\| \le \cC e^{-\gamma(T-t)}, \qquad 0\le t\le T,\\
    & \|M_T(t)^{-1}-M_\infty^{-1}\| \le \cC e^{-\gamma(T-t)}, \qquad 0\le t\le T.
   \end{aligned}
\end{equation}
\end{corollary}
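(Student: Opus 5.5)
The estimate for $M_T$ and $N_T$ will come directly from Proposition~\ref{prop:riccati-convergence}, since both depend linearly on $P_T(t)$. Subtracting the definitions gives
\[
M_T(t)-M_\infty=D^\top\bigl(P_T(t)-P_\infty\bigr)D,\qquad
N_T(t)-N_\infty=B^\top\bigl(P_T(t)-P_\infty\bigr)+D^\top\bigl(P_T(t)-P_\infty\bigr)C.
\]
Submultiplicativity of the spectral norm then yields
\[
\|M_T(t)-M_\infty\|+\|N_T(t)-N_\infty\|\le\bigl(\|D\|^2+\|B\|+\|D\|\|C\|\bigr)\|P_T(t)-P_\infty\|.
\]
Inserting \eqref{JC-convergence} gives the first line of \eqref{coefficient-decay}, with the constant enlarged by a factor depending only on the coefficients, and with the same $\gamma$.

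For the inverses, I will use the resolvent identity
\[
M_T(t)^{-1}-M_\infty^{-1}=M_T(t)^{-1}\bigl(M_\infty-M_T(t)\bigr)M_\infty^{-1}.
\]
The only ingredients needed are uniform bounds on the two inverse factors.

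\begin{itemize}
\item Proposition~\ref{prop:strong-regularity} gives $M_T(t)\ge\delta I_m$ for all $0\le t\le T$ and all $T>0$, with $\delta$ the joint coercivity constant from \ref{ass:JC}. Hence $\|M_T(t)^{-1}\|\le\delta^{-1}$.
\item The remark preceding the statement gives $M_\infty\ge\delta I_m$. Hence $\|M_\infty^{-1}\|\le\delta^{-1}$.
\end{itemize}

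It follows that
\[
\|M_T(t)^{-1}-M_\infty^{-1}\|\le\delta^{-2}\|M_T(t)-M_\infty\|\le\delta^{-2}\cC e^{-\gamma(T-t)},
\]
which is the second line of \eqref{coefficient-decay}.

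The only point needing care is that the constants are independent of $T$. This is guaranteed because $\delta$ comes from \ref{ass:JC}, which is uniform in $T$, and $\cC,\gamma$ come from Proposition~\ref{prop:riccati-convergence}. A combined estimate for $K_T-K_\infty$ would follow the same way if needed, using the uniform boundedness of $P_T$ from Lemma~\ref{lem:riccati-convergence} together with the transformation in Proposition~\ref{prop:riccati-convergence}.
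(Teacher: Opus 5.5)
Your proposal is correct and matches the paper's argument: the paper only says the corollary follows from ``standard matrix-norm estimates'' applied to Proposition~\ref{prop:riccati-convergence}. Those estimates are exactly what you use: $M_T$ and $N_T$ depend linearly on $P_T$, and the resolvent identity $M_T(t)^{-1}-M_\infty^{-1}=M_T(t)^{-1}\bigl(M_\infty-M_T(t)\bigr)M_\infty^{-1}$ is combined with the uniform bounds $M_T(t),M_\infty\ge\delta I_m$, as the paper itself does in \eqref{covariance-estimate}. Your closing remark on $K_T-K_\infty$ is not needed for this statement, and uniform boundedness of $P_T$ follows directly from \eqref{JC-convergence} anyway.
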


\begin{corollary}
There exist constants $\cC,\gamma>0$, independent of $T$, such that
\begin{equation*}
    \|K_T(t)-K_\infty\| \le \cC e^{-\gamma(T-t)}, \qquad 0\le t\le T.
\end{equation*}
\end{corollary}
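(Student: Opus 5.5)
The estimate follows from the preceding corollary, \eqref{coefficient-decay}, by an algebraic splitting of $K_T(t)-K_\infty$. Write
\[
K_T(t)-K_\infty=M_T(t)^{-1}N_T(t)-M_\infty^{-1}N_\infty
=M_T(t)^{-1}\bigl(N_T(t)-N_\infty\bigr)+\bigl(M_T(t)^{-1}-M_\infty^{-1}\bigr)N_\infty .
\]
Taking spectral norms gives
\[
\|K_T(t)-K_\infty\|\le \|M_T(t)^{-1}\|\,\|N_T(t)-N_\infty\|+\|M_T(t)^{-1}-M_\infty^{-1}\|\,\|N_\infty\|.
\]

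Three ingredients bound the right-hand side uniformly in $T$. First, Proposition~\ref{prop:strong-regularity} gives $M_T(t)\ge\delta I_m$ for all $0\le t\le T$, so $\|M_T(t)^{-1}\|\le\delta^{-1}$. Second, $N_\infty$ is a fixed matrix, so $\|N_\infty\|$ is a constant independent of $T$. Third, both $\|N_T(t)-N_\infty\|$ and $\|M_T(t)^{-1}-M_\infty^{-1}\|$ are bounded by $\cC e^{-\gamma(T-t)}$ by \eqref{coefficient-decay}. Combining these yields
\[
\|K_T(t)-K_\infty\|\le (\delta^{-1}+\|N_\infty\|)\,\cC e^{-\gamma(T-t)},
\]
which is the claim after relabeling the constant.

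There is no real obstacle. The only point to watch is that every constant is independent of $T$. This holds because $\delta$ comes from \ref{ass:JC}, and the constants in \eqref{coefficient-decay} are horizon-independent. That corollary in turn rests on Proposition~\ref{prop:riccati-convergence}, on the identity $M_T^{-1}-M_\infty^{-1}=M_T^{-1}(M_\infty-M_T)M_\infty^{-1}$, and on the bound $M_\infty\ge\delta I_m$.
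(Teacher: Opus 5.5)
Your proof is correct and follows essentially the route the paper intends: the paper gives no separate proof and treats this as a consequence of \eqref{coefficient-decay} through standard matrix-norm estimates. That is exactly your splitting $K_T(t)-K_\infty=M_T(t)^{-1}\bigl(N_T(t)-N_\infty\bigr)+\bigl(M_T(t)^{-1}-M_\infty^{-1}\bigr)N_\infty$, combined with the horizon-independent bound $\|M_T(t)^{-1}\|\le\delta^{-1}$.
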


%By Proposition~\ref{prop:exploratory-solution}, the linear term $\varphi_T$ in the finite-horizon HJB equation satisfies
%\begin{equation}
 %   \left\{
 %   \begin{aligned}
 %   &\dot\varphi_T(t) +\big(A-BK_T(t)\big)^\top\varphi_T(t) +\big(C-DK_T(t)\big)^\top P_T(t)\sigma\\
%  &\quad-K_T(t)^\top r+P_T(t)b+q=0, \qquad t\in[0,T],\\
%    &\varphi_T(T)=0.
%    \end{aligned}
 %   \right.
%\end{equation}
Recall that $\varphi_T$ solves the terminal-value problem
	\eqref{linear-terminal-equation}.
Since $P_T(t)$ and $K_T(t)$ will converge to $P_\infty$ and $K_\infty$ as $T-t\to\infty$, we consider the following algebraic equation:
\begin{equation}
    \label{affine-equation}
    \big(A-BK_\infty\big)^\top\varphi_\infty +\big(C-DK_\infty\big)^\top P_\infty\sigma -K_\infty^\top r+P_\infty b+q=0 .
\end{equation}
By Corollary~\ref{cor:JC-Damm}, the closed-loop system $[A-BK_\infty,C-DK_\infty]$ is $L^2$-stable.
Corollary~\ref{cor:Hurwitz} therefore implies that $A-BK_\infty$ is Hurwitz and hence invertible.
Thus \eqref{affine-equation} has a unique solution, which we denote by $\varphi_\infty$.

\begin{lemma}
Let $\varphi_T$ and $\varphi_\infty$ be the unique solutions of \eqref{linear-terminal-equation} and \eqref{affine-equation}, respectively.
	There exist constants $\cC,\gamma>0$, independent of $T$, such that
\begin{equation*}
    |\varphi_T(t)-\varphi_\infty| \le \cC e^{-\gamma(T-t)}, \qquad 0\le t\le T.
\end{equation*}
\end{lemma}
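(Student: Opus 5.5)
We compare the linear terminal-value problem \eqref{linear-terminal-equation} directly with the algebraic equation \eqref{affine-equation}. Set $\psi_T(t)=\varphi_T(t)-\varphi_\infty$. Subtracting \eqref{affine-equation} from \eqref{linear-terminal-equation} gives the terminal-value problem
\begin{equation*}
\dot\psi_T(t)+\big(A-BK_\infty\big)^\top\psi_T(t)+\rho_T(t)=0,\qquad \psi_T(T)=-\varphi_\infty,
\end{equation*}
where the forcing term $\rho_T$ collects all differences between finite-horizon and limiting coefficients:
\begin{equation*}
\rho_T(t)=-\big(B(K_T(t)-K_\infty)\big)^\top\varphi_T(t)+\big(C-DK_T(t)\big)^\top P_T(t)\sigma-\big(C-DK_\infty\big)^\top P_\infty\sigma-(K_T(t)-K_\infty)^\top r+(P_T(t)-P_\infty)b.
\end{equation*}
Here the term $-(B(K_T-K_\infty))^\top\varphi_T$ has been kept on the right-hand side. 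This is done so that the homogeneous part is governed by the fixed matrix $A-BK_\infty$, which is Hurwitz by Corollary~\ref{cor:JC-Damm} and Corollary~\ref{cor:Hurwitz}.

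The first and main step is to show that $\varphi_T$ is bounded uniformly in $T$ and $t$, since $\rho_T$ involves $\varphi_T$ itself. To do this, we could write \eqref{linear-terminal-equation} as $\dot\varphi_T+(A-BK_\infty)^\top\varphi_T+(B(K_\infty-K_T))^\top\varphi_T+g_T=0$. The inhomogeneity $g_T$ is uniformly bounded by Lemma~\ref{lem:riccati-convergence} and Proposition~\ref{prop:riccati-convergence} together with the bound on $K_T$. Using the variation-of-constants formula with $|e^{(A-BK_\infty)^\top s}|\le\alpha e^{-\beta s/2}$, we obtain
\begin{equation*}
|\varphi_T(t)|\le \cC+\alpha\int_t^T e^{-\beta(s-t)/2}\,\cC e^{-\gamma(T-s)}|\varphi_T(s)|\,ds .
\end{equation*}
Set $m_T=\sup_{[0,T]}|\varphi_T|$. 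On the region $T-s\ge h$, the kernel contribution is at most $\cC e^{-\gamma h}m_T$, which is small for $h$ large. On the last window $[T-h,T]$ we would use Gronwall's inequality, with a constant depending only on $h$, starting from $\varphi_T(T)=0$. Combining the two regions gives $m_T\le\cC$, independent of $T$.

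Alternatively, the same backward Gronwall-plus-window argument can be used in the form of Theorem~\ref{Thm-equiv}. It is cleaner to use the uniform exponential stability of the time-varying matrix $A-BK_T(t)$, which follows from its $e^{-\gamma(T-t)}$-closeness to the Hurwitz matrix $A-BK_\infty$. We expect this uniform-in-$T$ stability, or equivalently the uniform bound on $\varphi_T$, to be the main obstacle. The rest of the argument is routine.

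With the bound $\sup|\varphi_T|\le\cC$ in hand, the convergence estimates \eqref{Riccati-convergence}/\eqref{JC-convergence} and the corollaries on $K_T-K_\infty$ give $|\rho_T(t)|\le\cC e^{-\gamma(T-t)}$. The variation-of-constants formula then yields
\begin{equation*}
|\psi_T(t)|\le \alpha e^{-\beta(T-t)/2}|\varphi_\infty|+\alpha\cC\int_t^T e^{-\beta(s-t)/2}e^{-\gamma(T-s)}\,ds\le \cC e^{-\gamma'(T-t)},
\end{equation*}
with $\gamma'=\min\{\beta/2,\gamma\}/2$. The halving of the rate absorbs the possible linear factor $(T-t)$ that appears when $\beta/2=\gamma$. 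Relabeling $\gamma'$ as $\gamma$ gives the claim.
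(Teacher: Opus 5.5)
Your proof is correct and is essentially the standard argument the paper defers to (Sun--Wang--Yong, Lemma~5.1): you subtract \eqref{affine-equation}, treat the exponentially decaying coefficient differences as forcing for the Hurwitz matrix $(A-BK_\infty)^\top$, bound $\varphi_T$ uniformly in $T$, and conclude by variation of constants. One minor simplification is that the window splitting is unnecessary: bounding $e^{-\beta(s-t)/2}\le 1$ and applying the backward Gronwall inequality with kernel $\alpha\cC e^{-\gamma(T-s)}$, whose integral over $[t,T]$ is at most $\alpha\cC/\gamma$, gives $\sup_{0\le t\le T}|\varphi_T(t)|\le \cC e^{\alpha\cC/\gamma}$ directly.
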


\begin{proof}
The proof is the same as that in Sun--Wang--Yong \cite[Lemma~5.1]{SunWangYong2022}.
\end{proof}

Define
%\begin{align*}
 %  & d_T(t)=B^\top\varphi_T(t)+D^\top P_T(t)\sigma+r, \q d_\infty=B^\top\varphi_\infty+D^\top P_\infty\sigma+r,\q %\ell_T(t)=M_T(t)^{-1}d_T(t),\\
%   & \ell_\infty=M_\infty^{-1}d_\infty,\q \bar u_T(t,x)=-K_T(t)x-\ell_T(t), \q \bar u_\infty(x)=-K_\infty x-\ell_\infty.
%\end{align*}
%

	\begin{equation*}
		\begin{alignedat}{2}
			&d_T(t)=B^\top\varphi_T(t)+D^\top P_T(t)\sigma+r,
			\qquad
			&&d_\infty=B^\top\varphi_\infty+D^\top P_\infty\sigma+r,\\
			&\ell_T(t)=M_T(t)^{-1}d_T(t),
			\qquad
			&&\ell_\infty=M_\infty^{-1}d_\infty,\\
			&\bar u_T(t,x)=-K_T(t)x-\ell_T(t),
			\qquad
			&&\bar u_\infty(x)=-K_\infty x-\ell_\infty.
		\end{alignedat}
	\end{equation*}

With the above estimates, we can  obtain the following corollary.

\begin{corollary}
There exist constants $\cC,\gamma>0$, independent of $T$, such that
\begin{equation*}
    \|K_T(t)-K_\infty\|+|\ell_T(t)-\ell_\infty| \le \cC e^{-\gamma(T-t)}, \qquad 0\le t\le T.
\end{equation*}
Thus, for any $x\in\dbR^n$,
\begin{equation*}
    |\bar u_T(t,x)-\bar u_\infty(x)| \le \cC(1+|x|)e^{-\gamma(T-t)}.
\end{equation*}
\end{corollary}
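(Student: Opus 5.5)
The plan is to reduce the statement to the exponential estimates already obtained in this section by a single algebraic decomposition. The bound on $\|K_T(t)-K_\infty\|$ is exactly the preceding corollary, so the real work is the estimate for $\ell_T(t)-\ell_\infty$. After that, the bound on $\bar u_T(t,x)-\bar u_\infty(x)$ follows by the triangle inequality.

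For $\ell_T-\ell_\infty$, I would write
\[
\ell_T(t)-\ell_\infty
=\bigl(M_T(t)^{-1}-M_\infty^{-1}\bigr)d_T(t)+M_\infty^{-1}\bigl(d_T(t)-d_\infty\bigr),
\]
and estimate the two terms separately.

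For the first term, the second line of \eqref{coefficient-decay} gives $\|M_T(t)^{-1}-M_\infty^{-1}\|\le\cC e^{-\gamma(T-t)}$. It therefore suffices to show that $d_T(t)$ is bounded uniformly in $T$ and $t$. This holds because Lemma~\ref{lem:riccati-convergence} (through Proposition~\ref{prop:riccati-convergence}) yields uniform boundedness of $P_T(t)$. The preceding lemma on $\varphi_T$ yields $|\varphi_T(t)|\le|\varphi_\infty|+\cC$. Hence $|d_T(t)|\le\|B\||\varphi_T(t)|+\|D\|\|P_T(t)\||\sigma|+|r|\le\cC$.

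For the second term, write
\[
d_T(t)-d_\infty=B^\top\bigl(\varphi_T(t)-\varphi_\infty\bigr)+D^\top\bigl(P_T(t)-P_\infty\bigr)\sigma .
\]
By the lemma on $\varphi_T$ and by \eqref{JC-convergence}, this is bounded by $\cC e^{-\gamma(T-t)}$. We also have $\|M_\infty^{-1}\|\le\delta^{-1}$, since $M_\infty\ge\delta I_m$. Combining the two terms, after taking the minimum of the decay rates and the maximum of the constants, gives
\[
|\ell_T(t)-\ell_\infty|\le\cC e^{-\gamma(T-t)} .
\]

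Finally,
\[
|\bar u_T(t,x)-\bar u_\infty(x)|\le\|K_T(t)-K_\infty\||x|+|\ell_T(t)-\ell_\infty|\le\cC(1+|x|)e^{-\gamma(T-t)} .
\]

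There is no serious obstacle. The only point needing care is the uniform-in-$T$ boundedness of $d_T$, i.e.\ of $\varphi_T$ and $P_T$, and this is supplied by the earlier convergence results. Constants and rates are relabeled at each step, following the convention of this section.
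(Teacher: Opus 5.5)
Your proposal is correct and is essentially the paper's argument. The paper gives no separate proof and only says the corollary follows from the preceding estimates: the decay of $M_T^{-1}$ in \eqref{coefficient-decay}, the corollary for $K_T$, the lemma for $\varphi_T$, and the uniform bound on $P_T$ from \eqref{JC-convergence}. Your splitting $\ell_T-\ell_\infty=(M_T^{-1}-M_\infty^{-1})d_T+M_\infty^{-1}(d_T-d_\infty)$, together with the uniform bound on $d_T$, is exactly the routine verification the paper leaves implicit.
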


%The finite-horizon exploration covariance and the stationary exploration covariance are
%\begin{equation}\label{covariance-recall}
%    \Lambda_T^\lambda(t)=\frac{\lambda}{2}M_T(t)^{-1}, \qquad \Lambda_\infty^\lambda=\frac{\lambda}{2}M_\infty^{-1}.
%\end{equation}

Recall the exploration covariances from
	Proposition~\ref{prop:exploratory-solution} and
	\eqref{optimal-covariance}.
\begin{lemma}
\label{lem:covariance-convergence}
There exist constants $\cC,\gamma>0$, independent of $T$ and $\lambda$, such that
\begin{align}
    \label{covariance-decay}
    &\|\Lambda_T^\lambda(t)-\Lambda_\infty^\lambda\| \le \cC\lambda e^{-\gamma(T-t)}, \qquad 0\le t\le T,\\
    \label{sqrt-convergence}
    &\|\big(\Lambda_T^\lambda(t)\big)^{1/2}-\big(\Lambda_\infty^\lambda\big)^{1/2}\| \le \cC\sqrt\lambda e^{-\gamma(T-t)}, \qquad 0\le t\le T.
\end{align}
\end{lemma}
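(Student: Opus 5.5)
The first estimate \eqref{covariance-decay} follows at once from the definitions $\Lambda_T^\lambda(t)=\frac{\lambda}{2}M_T(t)^{-1}$ and $\Lambda_\infty^\lambda=\frac{\lambda}{2}M_\infty^{-1}$. Their difference is
\[
\Lambda_T^\lambda(t)-\Lambda_\infty^\lambda=\frac{\lambda}{2}\bigl(M_T(t)^{-1}-M_\infty^{-1}\bigr),
\]
so the second line of \eqref{coefficient-decay} gives \eqref{covariance-decay} with constant $\cC/2$. This constant is independent of $\lambda$, because $M_T$ and $M_\infty$ do not involve $\lambda$.

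For the square roots we factor out $\lambda$ in the same way:
\[
\bigl(\Lambda_T^\lambda(t)\bigr)^{1/2}-\bigl(\Lambda_\infty^\lambda\bigr)^{1/2}=\sqrt{\lambda/2}\,\bigl(M_T(t)^{-1/2}-M_\infty^{-1/2}\bigr).
\]
It therefore suffices to prove $\|M_T(t)^{-1/2}-M_\infty^{-1/2}\|\le \cC e^{-\gamma(T-t)}$. The main point is that the matrix square root is Lipschitz only on sets bounded away from zero, so we need uniform two-sided spectral bounds.

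The lower bounds come from Proposition~\ref{prop:strong-regularity} and the remark following it: $M_T(t)\ge\delta I_m$ and $M_\infty\ge\delta I_m$. Hence $M_T(t)^{-1},M_\infty^{-1}\le\delta^{-1}I_m$. For the upper bounds, Lemma~\ref{lem:riccati-convergence} and Proposition~\ref{prop:riccati-convergence} show that $P_T(t)$ is bounded uniformly in $T$ and $t$, so $M_T(t)\le c_1 I_m$. Consequently $M_T(t)^{-1}\ge c_1^{-1}I_m$, and likewise $M_\infty^{-1}\ge c_1^{-1}I_m$.

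Now let $X,Y\in\dbS^m_{++}$ with $X,Y\ge\alpha I_m$, where $\alpha=c_1^{-1}$. The difference $Z=X^{1/2}-Y^{1/2}$ solves the Sylvester equation
\[
X^{1/2}Z+ZY^{1/2}=X-Y.
\]
Take a unit eigenvector $w$ of $Z$ with eigenvalue $\theta$ satisfying $|\theta|=\|Z\|$. Then
\[
w^\top(X-Y)w=\theta\, w^\top\bigl(X^{1/2}+Y^{1/2}\bigr)w,
\qquad\text{so}\qquad
\|Z\|\le\frac{\|X-Y\|}{2\sqrt\alpha}.
\]
Applying this with $X=M_T(t)^{-1}$ and $Y=M_\infty^{-1}$, and using \eqref{coefficient-decay} once more, yields \eqref{sqrt-convergence}. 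The constants are independent of $T$ and $\lambda$.
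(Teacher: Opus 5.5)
Your proof is correct and follows the paper's argument. The first estimate comes straight from \eqref{coefficient-decay}, and the second uses the same uniform two-sided bounds $\delta I_m\le M_T(t),M_\infty\le\beta I_m$ together with a square-root perturbation inequality. The only differences are cosmetic: you factor out $\sqrt{\lambda/2}$ first and prove the perturbation bound yourself through the identity $X^{1/2}Z+ZY^{1/2}=X-Y$, whereas the paper applies Higham's Theorem~6.2 directly to $\Lambda_T^\lambda(t)$ and $\Lambda_\infty^\lambda$, using $\lambda_{\min}\ge\lambda/(2\beta)$.
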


\begin{proof}
%By \eqref{covariance-recall}, the estimate $M_T(t),M_\infty\ge\delta I_m$, and \eqref{coefficient-decay}, we have
Using the covariance formulas in
	Proposition~\ref{prop:exploratory-solution} and
	\eqref{optimal-covariance}, together with the bounds
	$M_T(t),M_\infty\ge\delta I_m$ and \eqref{coefficient-decay},
	we have
\begin{equation}
    \label{covariance-estimate}
    \begin{aligned}
    \|\Lambda_T^\lambda(t)-\Lambda_\infty^\lambda\| &=\frac{\lambda}{2}\|M_T(t)^{-1}-M_\infty^{-1}\|=\frac{\lambda}{2} \|M_T(t)^{-1}(M_\infty-M_T(t))M_\infty^{-1}\|\\
    &\le\frac{\lambda}{2\delta^2}\|M_T(t)-M_\infty\|\le \cC\lambda e^{-\gamma(T-t)}.
    \end{aligned}
\end{equation}
This proves \eqref{covariance-decay}.
By the uniform boundedness of $P_T(t)$ in Proposition~\ref{prop:riccati-convergence} and the definitions of $M_T(t)$ and $M_\infty$, there exists a constant $\beta>0$, independent of $T,t,\lambda$, such that
\begin{equation*}
    \delta I_m\le M_T(t)\le\beta I_m, \qquad \delta I_m\le M_\infty\le\beta I_m.
\end{equation*}
Hence
\begin{equation}
    \label{spectral-bound}
    \begin{aligned}
    M_T(t)^{-1}\ge\beta^{-1}I_m,\q M_\infty^{-1}&\ge\beta^{-1}I_m,\q
    \Lambda_T^\lambda(t)\ge\frac{\lambda}{2\beta}I_m, \q\Lambda_\infty^\lambda&\ge\frac{\lambda}{2\beta}I_m.
    \end{aligned}
\end{equation}
Note from Higham \cite[Theorem~6.2]{Higham2008} that
\[
    \|\Lambda^{1/2}-\widehat\Lambda^{1/2}\| \le \frac{\|\Lambda-\widehat\Lambda\|} {\sqrt{\lambda_{\min}(\Lambda)} +\sqrt{\lambda_{\min}(\widehat\Lambda)}},
\]
 for any $\Lambda,\widehat\Lambda\in\dbS_{++}^m$,  where $\lambda_{\min}(\cdot)$ denotes the smallest eigenvalue of the corresponding matrix.
%Substituting $\Lambda=\Lambda_T^\lambda(t)$ and $\widehat\Lambda=\Lambda_\infty^\lambda$ into the above, and combining %\eqref{covariance-estimate} and \eqref{spectral-bound}, we obtain
%\begin{equation*}
%    \|\big(\Lambda_T^\lambda(t)\big)^{1/2} -\big(\Lambda_\infty^\lambda\big)^{1/2}\| \le \cC\sqrt\lambda e^{-\gamma(T-t)}.
%\end{equation*}
%All the constants above are independent of $T,t,\lambda$, which proves \eqref{sqrt-convergence}.
Combining this inequality with \eqref{covariance-estimate} and
	\eqref{spectral-bound} gives \eqref{sqrt-convergence}, with constants
	independent of $T,t,\lambda$.
\end{proof}

Combining the preceding results, we conclude the following exponential convergence estimate between the finite-horizon and infinite-horizon parameters.

\begin{proposition}
\label{prop:parameter-convergence}
Let \ref{ass:L2-stabilizable}--\ref{ass:JC} hold.
Then there exist constants $\cC,\gamma>0$, independent of $T$, such that for all $0\le t\le T$,
\begin{equation}
    \begin{aligned}
    &\|P_T(t)-P_\infty\| +|\varphi_T(t)-\varphi_\infty| +\|K_T(t)-K_\infty\| +|\ell_T(t)-\ell_\infty|+\lambda^{-1}\|\Lambda_T^\lambda(t)-\Lambda_\infty^\lambda\|\\
    & +\lambda^{-1/2} \|\big(\Lambda_T^\lambda(t)\big)^{1/2}-\big(\Lambda_\infty^\lambda\big)^{1/2}\|\le \cC e^{-\gamma(T-t)}.
    \end{aligned}
\end{equation}
Therefore, for any $x\in\dbR^n$,
\begin{equation*}
    |\bar u_T(t,x)-\bar u_\infty(x)| \le \cC(1+|x|)e^{-\gamma(T-t)}.
\end{equation*}
\end{proposition}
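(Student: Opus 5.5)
This proposition only collects the estimates already proved in this section, so the plan is to invoke them one by one and merge the constants. Proposition~\ref{prop:riccati-convergence} bounds $\|P_T(t)-P_\infty\|$. The corollary following Theorem~\ref{Thm-equiv} bounds $\|K_T(t)-K_\infty\|$. The lemma on $\varphi_T$ bounds $|\varphi_T(t)-\varphi_\infty|$. Lemma~\ref{lem:covariance-convergence} bounds the two covariance terms: dividing \eqref{covariance-decay} by $\lambda$ and \eqref{sqrt-convergence} by $\sqrt\lambda$ leaves constants independent of $\lambda$. Each estimate comes with its own pair $(\cC_i,\gamma_i)$, all independent of $T$ and $t$. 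We take $\gamma=\min_i\gamma_i$ and $\cC=\sum_i\cC_i$. This is legitimate because $e^{-\gamma_i(T-t)}\le e^{-\gamma(T-t)}$ for $T-t\ge0$.

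The only term needing a short argument is $\ell_T-\ell_\infty$. We decompose
\[
\ell_T(t)-\ell_\infty=\bigl(M_T(t)^{-1}-M_\infty^{-1}\bigr)d_T(t)+M_\infty^{-1}\bigl(d_T(t)-d_\infty\bigr).
\]
The first factor is controlled by \eqref{coefficient-decay}. For the second, we write
\[
d_T(t)-d_\infty=B^\top(\varphi_T(t)-\varphi_\infty)+D^\top(P_T(t)-P_\infty)\sigma,
\]
which decays exponentially by the estimates above. We also use $\|M_\infty^{-1}\|\le\delta^{-1}$. Finally, $d_T(t)$ is uniformly bounded in $T$ and $t$, since $P_T$ is uniformly bounded (Lemma~\ref{lem:riccati-convergence}) and $\varphi_T$ is bounded by $|\varphi_\infty|+\cC$.

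For the feedback mean, the triangle inequality gives
\[
|\bar u_T(t,x)-\bar u_\infty(x)|\le\|K_T(t)-K_\infty\||x|+|\ell_T(t)-\ell_\infty|\le \cC(1+|x|)e^{-\gamma(T-t)}.
\]

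There is no real obstacle here. The one point to watch is uniformity: the boundedness of $d_T$ and the $\lambda$-independence of the normalized covariance constants must hold uniformly in $T$, and both follow from the uniform bounds $\delta I_m\le M_T(t)\le\beta I_m$ established in the proof of Lemma~\ref{lem:covariance-convergence}.
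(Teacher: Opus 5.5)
Your proposal is correct and follows the paper, which obtains this proposition simply by combining Proposition~\ref{prop:riccati-convergence}, the corollaries on $K_T$ and on $(K_T,\ell_T)$, the lemma on $\varphi_T$, and Lemma~\ref{lem:covariance-convergence}, merging the constants as you do; your decomposition of $\ell_T-\ell_\infty$ just supplies the omitted proof of the corollary that already records the $\ell_T$ bound. One small citation fix: under \ref{ass:L2-stabilizable}--\ref{ass:JC}, take the uniform bound on $P_T$ from \eqref{JC-convergence}, namely $\|P_T(t)\|\le\|P_\infty\|+\cC$, rather than from Lemma~\ref{lem:riccati-convergence}, which assumes that $[A,C]$ itself is $L^2$-stable.
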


\section{Turnpike Properties}
\label{sec:turnpike-properties}

Using the Gaussian feedback and parameter convergence established above, we construct the stochastic reference process $X^{*,\lambda}$ from the infinite-horizon closed-loop system starting at $x_*$.
We first prove the state-process turnpike estimate and then derive the Wasserstein estimate for the associated control distributions.
The reference object is a process because diffusion and exploration noise generally persist even when the state starts at the static center.

Using the long-time feedback parameters from Section~\ref{sec:riccati-parameter-convergence}, define the infinite-horizon Gaussian feedback distribution
\begin{equation}
    \label{Gaussian-feedback}
    \pi_\infty^{\lambda,*}(\cdot\mid x) = \cNcal\big(-K_\infty x-\ell_\infty,\Lambda_\infty^\lambda\big).
\end{equation}
Before constructing the reference process, we first show that this long-time feedback is compatible, at the static center, with the static distribution optimizer from Section~\ref{sec:static-distribution-optimization}.

\begin{proposition}
\label{prop:static-feedback-compatibility}
Let \ref{ass:L2-stabilizable}--\ref{ass:JC} hold.
Let $(x_*,u_*)$ be the unique optimizer of the static problem \eqref{static-problem}, and let $\zeta^*$ be a Lagrange multiplier satisfying the KKT system \eqref{KKT-system}.
%That is,
%\begin{equation*}
%    \left\{
%    &Ax_*+Bu_*+b=0,\\
%    &Qx_*+S^\top u_*+q+C^\top P_\infty(Cx_*+Du_*+\sigma)+A^\top \zeta^*=0,\\
%    &Sx_*+Ru_*+r+D^\top P_\infty(Cx_*+Du_*+\sigma)+B^\top \zeta^*=0.
%    \end{aligned}
%    \right.
%\end{equation*}
Then
\begin{equation}
    \label{zeta-feedback}
    \zeta^*=P_\infty x_*+\varphi_\infty,
\end{equation}
\begin{equation}
    \label{u-feedback}
    u_*=-K_\infty x_*-\ell_\infty,
\end{equation}
and the  static balance equation
\[
    (A-BK_\infty)x+b-B\ell_\infty=0
\]
admits the unique solution $x=x_*$.
Moreover,
\begin{equation}
    \label{static-feedback}
    \pi_\infty^{\lambda,*}(\cdot\mid x_*) = \cNcal(u_*,\Lambda_*^\lambda).
\end{equation}
\end{proposition}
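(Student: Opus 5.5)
The plan is to avoid guessing $\zeta^*$ directly. Instead I would show that the candidate triple built from the feedback data satisfies the KKT system \eqref{KKT-system}. The converse part of Proposition~\ref{prop:static-KKT} then identifies it with $(x_*,u_*)$.

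\textbf{Step 1: the candidate.} Since $A-BK_\infty$ is Hurwitz (Corollary~\ref{cor:Hurwitz} applied to the closed-loop system), define
\[
\hat x:=-(A-BK_\infty)^{-1}(b-B\ell_\infty),\qquad \hat u:=-K_\infty\hat x-\ell_\infty,\qquad \hat\zeta:=P_\infty\hat x+\varphi_\infty .
\]
By construction $A\hat x+B\hat u+b=(A-BK_\infty)\hat x-B\ell_\infty+b=0$. This gives the first KKT equation, and it also shows that $\hat x$ is the unique solution of the static balance equation.

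\textbf{Step 2: third KKT equation (the $u$-equation).} I would compute
\[
S\hat x+R\hat u+r+D^\top P_\infty(C\hat x+D\hat u+\sigma)+B^\top\hat\zeta
=N_\infty\hat x+M_\infty\hat u+d_\infty .
\]
Here the terms are grouped using $N_\infty=B^\top P_\infty+D^\top P_\infty C+S$, $M_\infty=R+D^\top P_\infty D$ and $d_\infty=B^\top\varphi_\infty+D^\top P_\infty\sigma+r$. Substituting $\hat u=-M_\infty^{-1}(N_\infty\hat x+d_\infty)$ makes this vanish identically.

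\textbf{Step 3: second KKT equation (the $x$-equation).} This is the main computational step. Write $f_x$ for the left-hand side evaluated at $(\hat x,\hat u,\hat\zeta)$. First substitute $\hat u=-K_\infty\hat x-\ell_\infty$ and $\hat\zeta=P_\infty\hat x+\varphi_\infty$. Then collect the terms linear in $\hat x$; their coefficient is
\[
Q+C^\top P_\infty C+A^\top P_\infty-(S^\top+C^\top P_\infty D)K_\infty .
\]
Using the ARE \eqref{ARE} in the form $Q+C^\top P_\infty C+A^\top P_\infty=-P_\infty A+N_\infty^\top K_\infty$, together with $N_\infty^\top=P_\infty B+C^\top P_\infty D+S^\top$, this coefficient reduces to $-P_\infty A+P_\infty BK_\infty=-P_\infty(A-BK_\infty)$.

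The constant terms are
\[
-(S^\top+C^\top P_\infty D)\ell_\infty+q+C^\top P_\infty\sigma+A^\top\varphi_\infty .
\]
Now invoke \eqref{affine-equation}, which gives $A^\top\varphi_\infty=K_\infty^\top B^\top\varphi_\infty-(C-DK_\infty)^\top P_\infty\sigma+K_\infty^\top r-P_\infty b-q$. After this substitution the constants become
\[
K_\infty^\top d_\infty-(S^\top+C^\top P_\infty D)\ell_\infty-P_\infty b .
\]
Since $K_\infty^\top d_\infty=N_\infty^\top\ell_\infty$, this equals $P_\infty B\ell_\infty-P_\infty b$.

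Adding the two pieces gives $-P_\infty\bigl[(A-BK_\infty)\hat x+b-B\ell_\infty\bigr]=0$ by Step 1. I expect the careful bookkeeping of the $\ell_\infty$ and $\sigma$ terms in this step to be the only real obstacle.

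\textbf{Step 4: identification.} The triple $(\hat x,\hat u,\hat\zeta)$ therefore satisfies \eqref{KKT-system}. The converse part of Proposition~\ref{prop:static-KKT} gives $(\hat x,\hat u)=(x_*,u_*)$, which yields \eqref{u-feedback} and the balance-equation claim.

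For \eqref{zeta-feedback}, the multiplier is unique because $[A\ \ B]$ has full row rank. Indeed, the difference of two multipliers $\eta$ satisfies $A^\top\eta=0$ and $B^\top\eta=0$, which forces $\eta=0$. Hence $\zeta^*=\hat\zeta=P_\infty x_*+\varphi_\infty$.

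Finally, \eqref{static-feedback} follows from \eqref{Gaussian-feedback}, \eqref{u-feedback} and $\Lambda_\infty^\lambda=\Lambda_*^\lambda$ from \eqref{optimal-covariance}.
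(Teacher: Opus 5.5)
Your proof is correct, but it runs in the opposite direction from the paper's.

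\textbf{The paper's route.} It starts from the given multiplier and sets $\eta^*:=\zeta^*-P_\infty x_*-\varphi_\infty$. The $u$-equation is rewritten as $u_*+M_\infty^{-1}(N_\infty x_*+d_\infty)=-M_\infty^{-1}B^\top\eta^*$. This is substituted into the $x$-equation, together with the ARE, the drift constraint and \eqref{affine-equation}, to obtain $(A-BK_\infty)^\top\eta^*=0$. Hence $\eta^*=0$, and \eqref{u-feedback} follows.

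\textbf{Your route.} You build the candidate $(\hat x,\hat u,\hat\zeta)$ from the feedback data and verify the three KKT equations. The algebra is the same as the paper's: the ARE, \eqref{affine-equation}, and $K_\infty^\top d_\infty=N_\infty^\top\ell_\infty$. You then identify the candidate with $(x_*,u_*,\zeta^*)$ using the sufficiency (converse) half of Proposition~\ref{prop:static-KKT}, plus uniqueness of the multiplier from the full row rank of $[A\ \ B]$.

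\textbf{What each buys.} The paper needs only the necessity half, namely that some multiplier exists, and gets uniqueness of the multiplier for free. Your approach needs the sufficiency half and a separate uniqueness argument. In exchange, it gives the explicit formula $x_*=-(A-BK_\infty)^{-1}(b-B\ell_\infty)$. Moreover, because it exhibits a KKT point directly, and the sufficiency half uses only $f_{xu}>0$ on nonzero tangent directions, your argument would by itself establish existence of the static minimizer. It would not need the coercivity/compactness step in Proposition~\ref{prop:static-KKT}.
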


\begin{proof}
We first prove that \rf{zeta-feedback} holds.
Let $\eta^*:=\zeta^*-P_\infty x_*-\varphi_\infty$.
The $u$-equation in the KKT system \eqref{KKT-system} gives
\begin{equation}
\label{u-KKT}
\begin{aligned}
0
&=Sx_*+Ru_*+r+D^\top P_\infty(Cx_*+Du_*+\sigma)+B^\top \zeta^*\\
%&&\text{(KKT $u$-equation)}\\
%&=Sx_*+Ru_*+r+D^\top P_\infty(Cx_*+Du_*+\sigma)
  %+B^\top(P_\infty x_*+\varphi_\infty+\eta^*)\\
%&&\text{(definition of $\eta^*$)}\\
%&=(B^\top P_\infty+D^\top P_\infty C+S)x_*
  %+(R+D^\top P_\infty D)u_*
  %+(B^\top\varphi_\infty+D^\top P_\infty\sigma+r)+B^\top\eta^*\\
%&&\text{(rearrangement)}\\
&=N_\infty x_*+M_\infty u_*+d_\infty+B^\top\eta^*.
%&&\text{(definitions of $M_\infty,N_\infty,d_\infty$)}.
\end{aligned}
\end{equation}
Since $M_\infty>0$, \eqref{u-KKT} is equivalent to
\begin{equation}
    \label{u-error}
    u_*+M_\infty^{-1}(N_\infty x_*+d_\infty) = -M_\infty^{-1}B^\top\eta^*.
\end{equation}
Moreover, the stationary linear equation \eqref{affine-equation} can be written as
\begin{equation}
\label{affine-expansion}
\begin{aligned}
0
&=(A-BK_\infty)^\top\varphi_\infty
  +(C-DK_\infty)^\top P_\infty\sigma-K_\infty^\top r+P_\infty b+q\\
%&&\text{(by \eqref{affine-equation})}\\
%&=A^\top\varphi_\infty+P_\infty b+C^\top P_\infty\sigma+q
  %-K_\infty^\top(B^\top\varphi_\infty+D^\top P_\infty\sigma+r)\\
%&&\text{(by expansion)}\\
&=A^\top\varphi_\infty+P_\infty b+C^\top P_\infty\sigma+q
  -N_\infty^\top M_\infty^{-1}d_\infty.
%&&\text{($K_\infty,d_\infty$ definitions; $M_\infty^\top=M_\infty$)}.
\end{aligned}
\end{equation}
Using the $x$-equation in \eqref{KKT-system}, the algebraic Riccati equation \eqref{ARE}, the static drift balance, and \eqref{u-error}--\eqref{affine-expansion}, we obtain
\begin{equation*}
\begin{aligned}
0
&=Qx_*+S^\top u_*+q+C^\top P_\infty(Cx_*+Du_*+\sigma)+A^\top \zeta^*\\
%&&\text{(KKT-$x$)}\\
%&=(Q+A^\top P_\infty+C^\top P_\infty C)x_*
  %+(S^\top+C^\top P_\infty D)u_*+q+C^\top P_\infty\sigma
  %+A^\top\varphi_\infty+A^\top\eta^*\\
%&&\text{($\eta^*$ definition)}\\
%&=(-P_\infty A+N_\infty^\top M_\infty^{-1}N_\infty)x_*
  %+(N_\infty^\top-P_\infty B)u_*+q+C^\top P_\infty\sigma
  %+A^\top\varphi_\infty+A^\top\eta^*\\
%&&\text{(ARE; $N_\infty$ definition)}\\
&=-P_\infty(Ax_*+Bu_*)
  +N_\infty^\top(M_\infty^{-1}N_\infty x_*+u_*)
  +q+C^\top P_\infty\sigma+A^\top\varphi_\infty+A^\top\eta^*\\
%&&\text{(rearrangement)}\\
%&=P_\infty b+N_\infty^\top(M_\infty^{-1}N_\infty x_*+u_*)
  %+q+C^\top P_\infty\sigma+A^\top\varphi_\infty+A^\top\eta^*\\
%&&\text{(static balance)}\\
&=N_\infty^\top\bigl[u_*+M_\infty^{-1}(N_\infty x_*+d_\infty)\bigr]+A^\top\eta^*\\
%&&\text{(by \eqref{affine-expansion})}\\
&=-N_\infty^\top M_\infty^{-1}B^\top\eta^*+A^\top\eta^*\\
%&&\text{(by \eqref{u-error})}\\
%&=(A-BM_\infty^{-1}N_\infty)^\top\eta^*\\
%&&\text{($M_\infty^\top=M_\infty$)}\\
&=(A-BK_\infty)^\top\eta^*.
%&&\text{(by $K_\infty=M_\infty^{-1}N_\infty$)}.
\end{aligned}
\end{equation*}
%Since $P_\infty$ is the stabilizing solution of the algebraic Riccati equation, closed-loop stability implies that %$A-BK_\infty$ is invertible.
Since $P_\infty$ is the stabilizing solution of \eqref{ARE},
	Corollary~\ref{cor:Hurwitz} implies that $A-BK_\infty$ is invertible.
Hence $\eta^*=0$, and \eqref{zeta-feedback} follows.

Next, substituting $\eta^*=0$ into \eqref{u-KKT}, we get
\[
    \begin{aligned}
    u_* &= -M_\infty^{-1}(N_\infty x_*+d_\infty)
    %&&\text{(by \eqref{u-KKT})}\\
    = -K_\infty x_*-\ell_\infty.
    %&&\text{(by the feedback definitions)}.
    \end{aligned}
\]
This is \eqref{u-feedback}. Moreover,
\[
    \begin{aligned}
    (A-BK_\infty)x_*+b-B\ell_\infty &= Ax_*+B(-K_\infty x_*-\ell_\infty)+b \\
    %&&\text{(by expansion)}\\
    &= Ax_*+Bu_*+b
    %&&\text{(by \eqref{u-feedback})}\\
    =0.
    %&&\text{(by the static drift balance)}.
    \end{aligned}
\]
Since $A-BK_\infty$ is invertible, this linear equation has a unique solution.

Finally, by the infinite-horizon Gaussian feedback formula \eqref{Gaussian-feedback}, the mean compatibility \eqref{u-feedback}, and the covariance compatibility \eqref{optimal-covariance},
\[
    \begin{aligned}
    \pi_\infty^{\lambda,*}(\cdot\mid x_*) &= \cNcal(-K_\infty x_*-\ell_\infty,\Lambda_\infty^\lambda)
    %&&\text{(by \eqref{u-feedback})}\\
    = \cNcal(u_*,\Lambda_*^\lambda),
    % &&\text{(by mean and covariance compatibility)}.
    \end{aligned}
\]
which is \eqref{static-feedback}.
\end{proof}

The closed-loop system of \rf{state} induced by the finite-horizon optimal feedback is given by
\begin{equation}
    \label{finite-closedloop}\left\{
    \begin{aligned}
    &dX^{T,\lambda}(t) = \big[(A-BK_T(t))X^{T,\lambda}(t)+b-B\ell_T(t)\big]dt        \\
    &\qq\qq\q+ \big[(C-DK_T(t))X^{T,\lambda}(t)+\sigma-D\ell_T(t)\big]dW(t) \\
    &\qq\qq\q+ D\big(\Lambda_T^\lambda(t)\big)^{1/2}d\widetilde W(t), \qq t\in[0,T],\\
    & X^{T,\lambda}(0)=x.
    \end{aligned}\right.
\end{equation}
%The infinite-horizon Gaussian feedback distribution \eqref{Gaussian-feedback} is given by
%\[
%    \pi_\infty^{\lambda,*}(\cdot\mid x) = \cNcal\big(-K_\infty x-\ell_\infty,\Lambda_\infty^\lambda\big).
%\]
%The infinite-horizon closed-loop process induced by this feedback distribution and starting from the initial value $x$ is %defined by
The infinite-horizon closed-loop system induced by the feedback
	distribution \eqref{Gaussian-feedback}  is defined by
\begin{equation}
    \label{infinite-closedloop}
    \left\{
    \begin{aligned}
    &dX^{\infty,\lambda,x}(t) = \big[(A-BK_\infty)X^{\infty,\lambda,x}(t)+b-B\ell_\infty\big]dt\\
    &\qq\qq\qq+ \big[(C-DK_\infty)X^{\infty,\lambda,x}(t)+\sigma-D\ell_\infty\big]dW(t)\\
    &\qq\qq\qq+ D\big(\Lambda_\infty^\lambda\big)^{1/2}d\widetilde W(t), \qq t\in[0,\infty),\\
    & X^{\infty,\lambda,x}(0)=x.
    \end{aligned}\right.
\end{equation}

%By Proposition~\ref{prop:static-feedback-compatibility},
%the unique optimizer $(x_*,u_*)$  of the static problem \eqref{static-problem} satisfies
%$u_*=-K_\infty x_*-\ell_\infty$.
%Combining this with the static drift balance $Ax_*+Bu_*+b=0$, we get
By \eqref{u-feedback} and the first equation in
	\eqref{KKT-system}, we obtain
\begin{equation}
    \label{static-balance}
    (A-BK_\infty)x_*+b-B\ell_\infty=0.
\end{equation}
Again using the above feedback distribution, define the stochastic reference process by
\begin{equation}
    \label{reference-state}
    \left\{
    \begin{aligned}
    &dX^{*,\lambda}(t) = \big[(A-BK_\infty)X^{*,\lambda}(t)+b-B\ell_\infty\big]dt\\
    &\qq\qq\q+ \big[(C-DK_\infty)X^{*,\lambda}(t)+\sigma-D\ell_\infty\big]dW(t)\\
    &\qq\qq\q+ D\big(\Lambda_\infty^\lambda\big)^{1/2}d\widetilde W(t), \qquad t\in[0,\infty),\\
    &X^{*,\lambda}(0)=x_*.
    \end{aligned}\right.
\end{equation}

\begin{proposition}
Let \ref{ass:L2-stabilizable}--\ref{ass:JC} hold.
Let $(x_*,u_*)$ be the unique optimizer of the static problem \eqref{static-problem},
$X^{*,\lambda}$ be the unique solution of \rf{reference-state},
%and $\bar u^{*,\lambda}$ be the mean of the corresponding optimal control, that is
%$\bar u^{*,\lambda} := \bar u_\infty(X^{*,\lambda}) = -K_\infty X^{*,\lambda}-\ell_\infty$.
and let $\bar u^{*,\lambda}(t)$ be the mean of the feedback
	distribution \eqref{Gaussian-feedback} evaluated along
	$X^{*,\lambda}(t)$, that is,
	$\bar u^{*,\lambda}(t):=\bar u_\infty(X^{*,\lambda}(t))
	=-K_\infty X^{*,\lambda}(t)-\ell_\infty$.
Then
$$
\dbE X^{*,\lambda}(t)=x_*, \qq \dbE\bar u^{*,\lambda}(t) = u_*,\qq t\in[0,\infty).
$$
\end{proposition}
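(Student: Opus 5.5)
The plan is to take expectations in the linear SDE \eqref{reference-state} and reduce the claim to a deterministic linear ODE whose unique equilibrium is $x_*$. Set $m(t):=\dbE X^{*,\lambda}(t)$. The drift and diffusion coefficients of \eqref{reference-state} are affine with constant matrices, and the solution belongs to $L^2$ on every finite interval (standard linear SDE estimates). Hence the stochastic integrals against $W$ and $\widetilde W$ are true martingales with zero mean. Taking expectations in the integral form of \eqref{reference-state} then gives
\begin{equation*}
m(t)=x_*+\int_0^t\bigl[(A-BK_\infty)m(s)+b-B\ell_\infty\bigr]ds,\qquad t\ge0.
\end{equation*}
So $m$ is absolutely continuous, and in fact $C^1$, and it solves $\dot m=(A-BK_\infty)m+b-B\ell_\infty$ with $m(0)=x_*$.

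The second step uses the static balance \eqref{static-balance}, namely $(A-BK_\infty)x_*+b-B\ell_\infty=0$, which follows from Proposition~\ref{prop:static-feedback-compatibility}. With it, the difference $e(t):=m(t)-x_*$ satisfies $\dot e=(A-BK_\infty)e$ with $e(0)=0$. Therefore $e(t)=e^{(A-BK_\infty)t}e(0)=0$ for all $t\ge0$, which gives $\dbE X^{*,\lambda}(t)=x_*$. This argument uses only uniqueness for the linear ODE and does not need the Hurwitz property. The Hurwitz property entered only earlier, through Corollary~\ref{cor:Hurwitz}, to guarantee that $x_*$ is the unique equilibrium.

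For the control mean, linearity of expectation gives $\dbE\bar u^{*,\lambda}(t)=-K_\infty\dbE X^{*,\lambda}(t)-\ell_\infty=-K_\infty x_*-\ell_\infty$. By \eqref{u-feedback} this equals $u_*$.

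The only point needing care is the justification that the It\^o integrals have zero expectation. This follows because $\dbE\int_0^t|(C-DK_\infty)X^{*,\lambda}(s)+\sigma-D\ell_\infty|^2ds<\infty$ and the integrand $D(\Lambda_\infty^\lambda)^{1/2}$ is constant. Both are immediate from the standard second-moment bound $\sup_{s\le t}\dbE|X^{*,\lambda}(s)|^2<\infty$ for linear SDEs with constant coefficients. A uniform-in-time bound is also available from the $L^2$-stability of $[A-BK_\infty,C-DK_\infty]$, but it is not needed here.
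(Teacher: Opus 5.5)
Your proposal is correct and follows the paper's proof essentially verbatim. You take expectations in \eqref{reference-state}, use the static balance \eqref{static-balance} to obtain a homogeneous linear ODE for $\dbE X^{*,\lambda}(t)-x_*$ with zero initial value, and then invoke \eqref{u-feedback} for the control mean; your added check that the It\^o integrals have zero mean is a detail the paper leaves implicit.
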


\begin{proof}
Taking expectation in \eqref{reference-state} and using \eqref{static-balance}, we obtain
\[
    \frac{d}{dt}\dbE X^{*,\lambda}(t) = (A-BK_\infty)\dbE X^{*,\lambda}(t)+b-B\ell_\infty = (A-BK_\infty)\big(\dbE X^{*,\lambda}(t)-x_*\big),
    \qquad \dbE X^{*,\lambda}(0)=x_*.
\]
Note that the above is a linear ordinary differential equation.
Then its unique solution is given by
\[
    \dbE X^{*,\lambda}(t)=x_*.
\]
Moreover, from \eqref{u-feedback}, we get
\[
    \dbE\bar u^{*,\lambda}(t) = -K_\infty\dbE X^{*,\lambda}(t)-\ell_\infty = -K_\infty x_*-\ell_\infty = u_*.
\]
This completes the proof.
\end{proof}

\begin{remark}
The means of $X^{*,\lambda}$ and $\bar u^{*,\lambda}$ stay at the static optimal points $x_*$ and $u_*$, respectively, although the processes themselves are generally not constant.
\end{remark}

By Corollary~\ref{cor:JC-Damm} and Lemma~\ref{lem:stability-equivalence}, the homogeneous closed-loop system
$dX(t)=(A-BK_\infty)X(t)dt+(C-DK_\infty)X(t)dW(t)$ is mean-square exponentially stable.
That is, there exist constants $\cC,\kappa>0$ such that
\begin{equation}
    \label{exponential-stability}
    \dbE|X(t)|^2\le \cC e^{-\kappa t}|X(0)|^2,\qquad t\ge0.
\end{equation}
Equivalently, one can choose $H\in\dbS^n_{++}$ such that
\begin{equation}
    \label{Lyapunov-equation}
    (A-BK_\infty)^\top H +H(A-BK_\infty) +(C-DK_\infty)^\top H(C-DK_\infty) \le -I_n.
\end{equation}
Let $h_-:=\lambda_{\min}(H)>0$ and $h_+:=\lambda_{\max}(H)>0$.
Then
\begin{equation}
    \label{H-norm}
    h_-|z|^2\le z^\top Hz\le h_+|z|^2, \qquad z\in\dbR^n.
\end{equation}

\begin{lemma}
\label{lem:moment-bound}
Let \ref{ass:L2-stabilizable}--\ref{ass:JC} hold.
Then there exists a constant $\cC>0$, independent of $t$, such that for any initial value $x\in\dbR^n$,
\begin{equation}
    \label{infinite-moment}
    \sup_{t\ge0}\dbE|X^{\infty,\lambda,x}(t)|^2 \le \cC(1+|x|^2+\lambda).
\end{equation}
In particular,
\begin{equation}
    \label{reference-moment}
    \sup_{t\ge0}\dbE|X^{*,\lambda}(t)|^2 \le \cC(1+\lambda).
\end{equation}
\end{lemma}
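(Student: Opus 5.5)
We apply It\^o's formula to the Lyapunov function $X^\top H X$, with $H$ taken from \eqref{Lyapunov-equation}, along the solution $X:=X^{\infty,\lambda,x}$ of \eqref{infinite-closedloop}. Write $A_\infty=A-BK_\infty$, $C_\infty=C-DK_\infty$, $\beta_0=b-B\ell_\infty$ and $\sigma_0=\sigma-D\ell_\infty$. Since $W$ and $\widetilde W$ are independent, the cross quadratic variation vanishes. Taking expectations (the stochastic integrals are true martingales because the linear SDE has finite second moments on bounded intervals), we get
\begin{equation*}
\frac{d}{dt}\dbE[X^\top HX]=\dbE\Bigl[X^\top\bigl(A_\infty^\top H+HA_\infty+C_\infty^\top HC_\infty\bigr)X+2X^\top\bigl(H\beta_0+C_\infty^\top H\sigma_0\bigr)\Bigr]+\sigma_0^\top H\sigma_0+\tr\bigl(D^\top HD\,\Lambda_\infty^\lambda\bigr).
\end{equation*}

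By \eqref{Lyapunov-equation}, the quadratic term is at most $-\dbE|X|^2$. For the linear term, Young's inequality gives $2|X^\top c|\le \tfrac12|X|^2+2|c|^2$ with $c=H\beta_0+C_\infty^\top H\sigma_0$, a constant vector that does not depend on $\lambda$ (recall $\ell_\infty$ does not depend on $\lambda$). The last term equals $\frac{\lambda}{2}\tr(D^\top HDM_\infty^{-1})\le \cC\lambda$, using $M_\infty\ge\delta I_m$. Hence
\begin{equation*}
\frac{d}{dt}\dbE[X^\top HX]\le -\tfrac12\dbE|X|^2+\cC(1+\lambda)\le -\frac{1}{2h_+}\dbE[X^\top HX]+\cC(1+\lambda),
\end{equation*}
where the second inequality uses \eqref{H-norm}.

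Gronwall's inequality then yields
\begin{equation*}
\dbE[X(t)^\top HX(t)]\le e^{-t/(2h_+)}h_+|x|^2+2h_+\cC(1+\lambda).
\end{equation*}
Dividing by $h_-$ via \eqref{H-norm} gives \eqref{infinite-moment} with a constant independent of $t$, $x$ and $\lambda$. Taking $x=x_*$ gives \eqref{reference-moment}, since $|x_*|$ is a fixed constant.

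The only delicate point is justifying the expectation of the It\^o differential. This is routine: either localize with stopping times and pass to the limit by Fatou's lemma, or use the standard square-integrability of solutions of linear SDEs on finite intervals.
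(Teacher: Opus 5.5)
Your proof is correct and follows the same Lyapunov--It\^o--Gronwall argument as the paper, using the matrix $H$ from \eqref{Lyapunov-equation}, Young's inequality, and the bound $\tr(D^\top HD\Lambda_\infty^\lambda)\le\cC\lambda$. The only difference is that the paper first centers at $x_*$: it sets $Y=X^{\infty,\lambda,x}-x_*$ and uses the static balance \eqref{static-balance} to remove the constant drift. You instead keep the constant drift $b-B\ell_\infty$ and absorb it with Young's inequality together with the diffusion cross term, which works equally well and does not need \eqref{static-balance}.
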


\begin{proof}
Let $Y(t)=X^{\infty,\lambda,x}(t)-x_*$.
Set
$\xi_*:=(C-DK_\infty)x_*+\sigma-D\ell_\infty=Cx_*+Du_*+\sigma$.
Then by \eqref{static-balance},  we have
\begin{equation*}\left\{
    \begin{aligned}
   & dY(t) =(A-BK_\infty)Y(t)dt+\big[(C-DK_\infty)Y(t)+\xi_*\big]dW(t)\\
    &\qq\qq+D(\Lambda_\infty^\lambda)^{1/2}d\widetilde W(t), \qquad t\in[0,\infty),\\
    &Y(0)=x-x_*.
    \end{aligned}\right.
\end{equation*}
Applying It\^o's formula to $Y(t)^\top HY(t)$, and using the independence of $W$ and $\widetilde W$, we obtain
\begin{align*}
    \frac{d}{dt}\dbE(Y(t)^\top HY(t)) ={}& \dbE\big[Y(t)^\top\big((A-BK_\infty)^\top H +H(A-BK_\infty)\big)Y(t)\big]\\
    &+\dbE\big[Y(t)^\top(C-DK_\infty)^\top H(C-DK_\infty)Y(t)\big]\\
    &+ 2\dbE\big[Y(t)^\top(C-DK_\infty)^\top H\xi_*\big] +\xi_*^\top H\xi_* +\tr\big(D^\top H D\Lambda_\infty^\lambda\big).
\end{align*}
%From \eqref{Lyapunov-equation}, we have
%\[
 %   \begin{aligned}
%    Y(t)^\top\big[(A-BK_\infty)^\top H+H(A-BK_\infty) +(C-DK_\infty)^\top H(C-DK_\infty)\big]Y(t)
 %  \le -|Y(t)|^2.
 %   \end{aligned}
%\]
%On the other hand, Young's inequality yields
By \eqref{Lyapunov-equation}, the quadratic terms are bounded by
	$-\dbE|Y(t)|^2$. For the cross term, Young's inequality gives
\[
    \begin{aligned}
    %2Y(t)^\top(C-DK_\infty)^\top H\xi_* &\le \frac12|Y(t)|^2 +2\big|(C-DK_\infty)^\top H\xi_*\big|^2\\
    2Y(t)^\top(C-DK_\infty)^\top H\xi_* &\le \frac12|Y(t)|^2+\cC|\xi_*|^2.
    \end{aligned}
\]
The exploration-noise trace term satisfies
\[
    \begin{aligned}
    \tr\bigl(D^\top H D\Lambda_\infty^\lambda\bigr) &= \frac{\lambda}{2} \tr\bigl(D^\top H D M_\infty^{-1}\bigr)
    \le \cC\lambda.
    \end{aligned}
\]
Thus
\[
    \frac{d}{dt}\dbE(Y(t)^\top HY(t)) \le -\frac12\dbE|Y(t)|^2+\cC(1+\lambda).
\]
From \eqref{H-norm}, we have $|y|^2\ge h_+^{-1}y^\top Hy$. Consequently,
\[
    \frac{d}{dt}\dbE(Y(t)^\top HY(t)) \le -c\dbE(Y(t)^\top HY(t))+\cC(1+\lambda).
\]
Gronwall's inequality then yields
\[
    \dbE|Y(t)|^2 \le \cC e^{-ct}|x-x_*|^2+\cC(1+\lambda), \qquad t\ge0.
\]
Since $x_*$ is fixed, \eqref{infinite-moment} follows. Setting $x=x_*$ gives \eqref{reference-moment}.
\end{proof}

\begin{theorem}
\label{thm:state-turnpike}
Let \ref{ass:L2-stabilizable}--\ref{ass:JC} hold.
Let $X^{T,\lambda}$ be the unique solution of the finite-horizon closed-loop system \eqref{finite-closedloop},
and let $X^{*,\lambda}$ be the stochastic reference process given by \eqref{reference-state}.
Then, for any fixed initial state $x$ and exploration weight $\lambda>0$, there exist constants $K=K_{x,\lambda}>0$ and $\mu>0$, independent of $T,t$, such that for all $0\le t\le T$,
\begin{equation}
    \label{state-turnpike}
    \dbE|X^{T,\lambda}(t)-X^{*,\lambda}(t)|^2 \le K\big(e^{-\mu t}+e^{-\mu(T-t)}\big).
\end{equation}
\end{theorem}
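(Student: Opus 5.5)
The plan is to split the error through the intermediate process $X^{\infty,\lambda,x}$ of \eqref{infinite-closedloop}, which uses the same Brownian motions and the same initial state $x$ as $X^{T,\lambda}$:
\[
X^{T,\lambda}-X^{*,\lambda}=\bigl(X^{T,\lambda}-X^{\infty,\lambda,x}\bigr)+\bigl(X^{\infty,\lambda,x}-X^{*,\lambda}\bigr).
\]
The second difference should give the $e^{-\mu t}$ term and the first the $e^{-\mu(T-t)}$ term. By $(a+b)^2\le 2a^2+2b^2$ it suffices to bound each piece separately.

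\emph{Initial layer.} Set $Z=X^{\infty,\lambda,x}-X^{*,\lambda}$. Both processes have the same coefficients and are driven by the same noises, so the constant terms $b-B\ell_\infty$, $\sigma-D\ell_\infty$ and the exploration noise $D(\Lambda_\infty^\lambda)^{1/2}d\widetilde W$ cancel. Hence $Z$ solves the homogeneous closed-loop equation
\[
dZ=(A-BK_\infty)Z\,dt+(C-DK_\infty)Z\,dW,\qquad Z(0)=x-x_*.
\]
The estimate \eqref{exponential-stability} then gives $\dbE|Z(t)|^2\le \cC e^{-\kappa t}|x-x_*|^2$.

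\emph{Terminal layer.} Set $E=X^{T,\lambda}-X^{\infty,\lambda,x}$, so $E(0)=0$. Subtracting \eqref{infinite-closedloop} from \eqref{finite-closedloop} gives
\[
\begin{aligned}
dE={}&\bigl[(A-BK_\infty)E+\alpha_T(t)\bigr]dt+\bigl[(C-DK_\infty)E+\beta_T(t)\bigr]dW\\
&+D\bigl[(\Lambda_T^\lambda)^{1/2}-(\Lambda_\infty^\lambda)^{1/2}\bigr]d\widetilde W,
\end{aligned}
\]
with forcing terms
\[
\alpha_T=-B(K_T-K_\infty)X^{T,\lambda}-B(\ell_T-\ell_\infty),\qquad \beta_T=-D(K_T-K_\infty)X^{T,\lambda}-D(\ell_T-\ell_\infty).
\]
Apply It\^o's formula to $E^\top HE$, using the Lyapunov matrix $H$ of \eqref{Lyapunov-equation}. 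Young's inequality absorbs the cross terms into $-\tfrac12\dbE|E|^2$. Proposition~\ref{prop:parameter-convergence} bounds the remaining forcing by $\cC e^{-2\gamma(T-t)}\bigl(1+\dbE|X^{T,\lambda}(t)|^2+\lambda\bigr)$. This leads to
\[
\frac{d}{dt}\dbE(E^\top HE)\le -c\,\dbE(E^\top HE)+\cC e^{-2\gamma(T-t)}\bigl(1+\lambda+\dbE|X^{T,\lambda}(t)|^2\bigr).
\]
With $E(0)=0$, Gronwall's inequality yields
\[
\dbE|E(t)|^2\le \cC\int_0^t e^{-c(t-s)}e^{-2\gamma(T-s)}(\cdots)\,ds\le \cC' e^{-\min(c,2\gamma)(T-t)}
\]
(up to a harmless reduction of the rate if $c=2\gamma$), provided the bracket $(\cdots)$ is bounded uniformly in $T$ and $s$.

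\emph{Main obstacle.} The key step is the uniform moment bound $\sup_{T}\sup_{0\le t\le T}\dbE|X^{T,\lambda}(t)|^2\le \cC(1+|x|^2+\lambda)$. The finite-horizon closed loop uses the time-varying gains $K_T(t)$, which are near $K_\infty$ only when $T-t$ is large. I would avoid a direct Lyapunov argument on the time-varying system and argue as follows. Write $\dbE|X^{T,\lambda}|^2\le 2\dbE|E|^2+2\dbE|X^{\infty,\lambda,x}|^2$ and use Lemma~\ref{lem:moment-bound} for the second term. Then insert the bound $\dbE|X^{T,\lambda}|^2\le 2\dbE|E|^2+\cC$ into the forcing of the inequality above, giving
\[
\frac{d}{dt}\dbE(E^\top HE)\le\Bigl(-c+\cC e^{-2\gamma(T-t)}\Bigr)\dbE(E^\top HE)+\cC e^{-2\gamma(T-t)}.
\]
The coefficient $\cC e^{-2\gamma(T-t)}$ has integral over $[0,T]$ bounded by $\cC/(2\gamma)$, independently of $T$. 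Its exponential is therefore a uniformly bounded Gronwall factor, so $\dbE|E|^2$ is bounded uniformly in $T,t$ and in fact already decays like $e^{-\mu(T-t)}$. This closes the argument without a separate a priori estimate.

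Combining the two pieces gives \eqref{state-turnpike} with $\mu=\min\{\kappa,c,2\gamma\}$, slightly reduced if needed, and $K$ depending on $x$ and $\lambda$.
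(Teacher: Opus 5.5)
Your proposal is correct and follows the paper's proof essentially step for step. The paper uses the same split through $X^{\infty,\lambda,x}$, exponential stability for the initial layer, and, for the terminal layer, the Lyapunov matrix $H$, Young's inequality, and Gronwall with the $T$-uniformly integrable perturbation $\cC e^{-2\gamma(T-t)}$; this is exactly how it absorbs the $X^{T,\lambda}$-dependence of the forcing. One small point: since $e^{-2\gamma(T-s)}=e^{-2\gamma(T-t)}e^{-2\gamma(t-s)}$, the bound $\int_0^t e^{-c(t-s)}e^{-2\gamma(T-s)}\,ds\le e^{-2\gamma(T-t)}/(c+2\gamma)$ already gives the rate $2\gamma$, so the $\min(c,2\gamma)$ and the caveat about $c=2\gamma$ are unnecessary.
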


\begin{proof}
We first compare $X^{T,\lambda}$ with the infinite-horizon intermediate process $X^{\infty,\lambda,x}$ with the same initial value.
Let $Z(t)=X^{T,\lambda}(t)-X^{\infty,\lambda,x}(t)$.
Subtracting \eqref{infinite-closedloop} from \eqref{finite-closedloop}, we obtain
\begin{equation}
    \label{error-SDE}\left\{
    \begin{aligned}
    &dZ(t) =\big[(A-BK_\infty)Z(t)-B\Delta R_t\big]dt\\
    &\qq\qq+\big[(C-DK_\infty)Z(t)-D\Delta R_t\big]dW(t)+D\Delta\Lambda_t\,d\widetilde W(t), \qquad t\in[0,T],\\
    &Z(0)=0,
    \end{aligned}\right.
\end{equation}
where
\begin{equation*}
    \begin{aligned}
    \Delta R_t= \big(K_T(t)-K_\infty\big)X^{T,\lambda}(t) +\big(\ell_T(t)-\ell_\infty\big),\qq
    \Delta\Lambda_t= \big(\Lambda_T^\lambda(t)\big)^{1/2} - \big(\Lambda_\infty^\lambda\big)^{1/2}.
    \end{aligned}
\end{equation*}
By Proposition~\ref{prop:parameter-convergence}, there exist constants $\cC,\gamma_0>0$ such that
\begin{equation}
    \label{error-scale}
    \|K_T(t)-K_\infty\| +|\ell_T(t)-\ell_\infty| \le \cC\delta_t, \qquad \|\Delta\Lambda_t\|\le \cC\sqrt{\lambda}\,\delta_t,
\end{equation}
where $\delta_t=e^{-\gamma_0(T-t)}$.
Since $X^{T,\lambda}(t)=Z(t)+X^{\infty,\lambda,x}(t)$, we have
\begin{equation*}
    \Delta R_t = \big(K_T(t)-K_\infty\big)Z(t) + \big(K_T(t)-K_\infty\big)X^{\infty,\lambda,x}(t) + \big(\ell_T(t)-\ell_\infty\big).
\end{equation*}
Then by Lemma~\ref{lem:moment-bound}, we have
\begin{equation}
    \begin{aligned}
    \label{drift-error}
    \dbE|\Delta R_t|^2 &\le \cC\delta_t^2 \left( \dbE|Z(t)|^2 +\dbE|X^{\infty,\lambda,x}(t)|^2+1 \right) \\
    %&&\text{(by the decomposition and parameter bounds)}\\
    &\le \cC\delta_t^2\dbE|Z(t)|^2 +\cC(1+|x|^2+\lambda)\delta_t^2.
    %&&\text{(by Lemma~\ref{lem:moment-bound})}.
    \end{aligned}
\end{equation}

We next apply It\^o's formula to $Z(t)^\top HZ(t)$.
From \eqref{error-SDE}, we have
\begin{align*}
    \frac{d}{dt}\dbE(Z(t)^\top HZ(t)) ={}& \dbE\big[Z(t)^\top\big((A-BK_\infty)^\top H +H(A-BK_\infty)\big)Z(t)\big]\\
    &+\dbE\big[Z(t)^\top(C-DK_\infty)^\top H(C-DK_\infty)Z(t)\big]\\
    &- 2\dbE[Z(t)^\top HB\Delta R_t] - 2\dbE[Z(t)^\top(C-DK_\infty)^\top H D\Delta R_t]        \\
    &+ \dbE[(\Delta R_t)^\top D^\top H D\Delta R_t] + \tr\big((\Delta\Lambda_t)^\top D^\top H D\Delta\Lambda_t\big).
\end{align*}
Using \eqref{Lyapunov-equation} and Young's inequality, we get
\[
    \frac{d}{dt}\dbE(Z(t)^\top HZ(t)) \le -\frac12\dbE|Z(t)|^2 +\cC\dbE|\Delta R_t|^2 +\cC\|\Delta\Lambda_t\|^2.
\]
Let $V_Z(t)=\dbE(Z(t)^\top HZ(t))$.
Substituting \eqref{error-scale} and \eqref{drift-error} into the above,
we obtain
\[
    \dot V_Z(t) \le \big[-c+\cC\delta_t^2\big]V_Z(t) + \cC(1+|x|^2+\lambda)\delta_t^2, \qquad V_Z(0)=0.
\]
By Gronwall's inequality,
\[
    V_Z(t) \le \cC(1+|x|^2+\lambda) \int_0^t \exp\left(\int_s^t[-c+\cC\delta_r^2]dr\right) \delta_s^2ds.
\]
Note that
\[
    \begin{aligned}
    \int_s^t\delta_r^2dr &= \int_s^t e^{-2\gamma_0(T-r)}dr
    = \frac{e^{-2\gamma_0(T-t)}-e^{-2\gamma_0(T-s)}}{2\gamma_0}\le \cC.
    \end{aligned}
\]
Hence
\[
    \begin{aligned}
    \exp\left(\int_s^t[-c+\cC\delta_r^2]dr\right) &= e^{-c(t-s)} \exp\left(\cC\int_s^t\delta_r^2dr\right)\le \cC e^{-c(t-s)}.
    \end{aligned}
\]
It follows that
\begin{align*}
    V_Z(t) \le \cC(1+|x|^2+\lambda) \int_0^t e^{-c(t-s)}e^{-2\gamma_0(T-s)}ds
    %&= \cC(1+|x|^2+\lambda)e^{-2\gamma_0(T-t)} \int_0^t e^{-(c+2\gamma_0)(t-s)}ds\\
    \le \cC(1+|x|^2+\lambda)e^{-2\gamma_0(T-t)}.
\end{align*}
Using \eqref{H-norm}, we get
\begin{equation}
    \label{same-initial}
    \dbE|X^{T,\lambda}(t)-X^{\infty,\lambda,x}(t)|^2 \le \cC(1+|x|^2+\lambda)e^{-2\gamma_0(T-t)}.
\end{equation}
Together with Lemma~\ref{lem:moment-bound}, this also gives the uniform second-moment estimate for the finite-horizon  state process $X^{T,\lambda}$:
\begin{equation}
    \label{state-moment}
    \sup_{0\le t\le T}\dbE|X^{T,\lambda}(t)|^2 \le \cC(1+|x|^2+\lambda).
\end{equation}

On the other hand, $X^{\infty,\lambda,x}$ and $X^{*,\lambda}$ satisfy the same infinite-horizon closed-loop equation and are driven by the same Brownian motions.
Their only difference is the initial value.
Let $Y(t)=X^{\infty,\lambda,x}(t)-X^{*,\lambda}(t)$.
Then
\begin{equation*}
    dY(t)=(A-BK_\infty)Y(t)dt+(C-DK_\infty)Y(t)dW(t), \qq t\in[0,\infty),
\end{equation*}
with $Y(0)=x-x_*$.
By \eqref{exponential-stability}, we obtain the  estimate
\begin{equation}
    \label{initial-estimate}
    \dbE|X^{\infty,\lambda,x}(t)-X^{*,\lambda}(t)|^2 \le \cC e^{-\k t}|x-x_*|^2.
\end{equation}
Finally,
\[
    X^{T,\lambda}(t)-X^{*,\lambda}(t) = \big(X^{T,\lambda}(t)-X^{\infty,\lambda,x}(t)\big) + \big(X^{\infty,\lambda,x}(t)-X^{*,\lambda}(t)\big).
\]
Using $|a+b|^2\le 2|a|^2+2|b|^2$, and combining \eqref{same-initial} with \eqref{initial-estimate}, we obtain \eqref{state-turnpike}.
\end{proof}

We now derive a Wasserstein comparison for the control distributions from the state-process estimate.
The finite-horizon optimal feedback distribution is
\begin{equation*}
    \pi_T^{\lambda,*}(du\mid t,x) = \cNcal\big(\bar u_T(t,x),\Lambda_T^\lambda(t)\big)(du), \qquad \bar u_T(t,x)=-K_T(t)x-\ell_T(t),
\end{equation*}
and the infinite-horizon Gaussian feedback distribution defined by the long-time parameters is
\begin{equation*}
    \pi_\infty^{\lambda,*}(du\mid x) = \cNcal\big(\bar u_\infty(x),\Lambda_\infty^\lambda\big)(du), \qquad \bar u_\infty(x)=-K_\infty x-\ell_\infty.
\end{equation*}

To compare the finite-horizon control distribution with its infinite-horizon reference distribution, we use the $\cW_2$ distance \cite{Villani2009}.

\begin{definition}
Let $\mu,\nu$ be two Borel probability measures on $\dbR^m$ with finite second moments, and let $\Pi(\mu,\nu)$ be the set of all joint Borel probability distributions $\pi$ on $\dbR^m\times\dbR^m$ with first marginal $\mu$ and second marginal $\nu$.
The $\cW_2$ distance between $\mu$ and $\nu$ is defined by

\begin{equation*}
	\cW_2(\mu,\nu)
	=
	\left(
	\inf_{\pi\in\Pi(\mu,\nu)}
	\int_{\dbR^m\times\dbR^m}|u-v|^2\,d\pi(u,v)
	\right)^{1/2},
\end{equation*}
where $|\cdot|$ is the Euclidean norm on $\dbR^m$.
\end{definition}

When the feedback distributions are Gaussian,
the following formula, which is  taken from Peyr\'e--Cuturi \cite{PeyreCuturi2019}, allows us to compare them through their means and covariance matrices.

\begin{lemma}
\label{lem:Gaussian-W2}
Let $\mu_i=\cNcal(m_i,\Sigma_i)$, $i=1,2$, where $m_i\in\dbR^m$ and $\Sigma_i\in\dbS^m_{++}$.
\begin{align*}
    &\cW_2^2(\mu_1,\mu_2) = |m_1-m_2|^2+\mathfrak B^2(\Sigma_1,\Sigma_2),\\
    &\mathfrak B^2(\Sigma_1,\Sigma_2) = \tr\left[ \Sigma_1+\Sigma_2 -2\left(\Sigma_2^{1/2}\Sigma_1\Sigma_2^{1/2}\right)^{1/2} \right],\\
    &\mathfrak B(\Sigma_1,\Sigma_2) \le \|\Sigma_1^{1/2}-\Sigma_2^{1/2}\|_F.
\end{align*}
\end{lemma}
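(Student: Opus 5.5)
The statement is Lemma~\ref{lem:Gaussian-W2}. The plan has three parts: a reduction to centered Gaussians, the identification of the Bures term via the optimal Gaussian coupling, and the Frobenius bound via a suboptimal coupling.

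\emph{Reduction to centered Gaussians.} For any coupling $(U,V)$ of $\mu_1,\mu_2$, write $U=m_1+U_0$ and $V=m_2+V_0$ with $U_0,V_0$ centered. Then
\[
\dbE|U-V|^2=|m_1-m_2|^2+\dbE|U_0-V_0|^2,
\]
because the cross term vanishes in expectation. Hence $\cW_2^2(\mu_1,\mu_2)=|m_1-m_2|^2+\cW_2^2\bigl(\cNcal(0,\Sigma_1),\cNcal(0,\Sigma_2)\bigr)$.

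\emph{Bures formula.} In the centered case, $\dbE|U_0-V_0|^2=\tr\Sigma_1+\tr\Sigma_2-2\,\tr\,\dbE[U_0V_0^\top]$. So the problem reduces to maximizing $\tr\Gamma$ over cross-covariances $\Gamma$ for which $\begin{pmatrix}\Sigma_1&\Gamma\\ \Gamma^\top&\Sigma_2\end{pmatrix}\ge0$. The upper bound uses the standard transport map $T=\Sigma_2^{-1/2}(\Sigma_2^{1/2}\Sigma_1\Sigma_2^{1/2})^{1/2}\Sigma_2^{-1/2}$, which is symmetric positive definite and satisfies $T\Sigma_2T=\Sigma_1$.
\begin{enumerate}[(i)]
\item Set $U_0=TV_0$. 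This is a coupling, since $T\Sigma_2T=\Sigma_1$, and it gives $\tr\Gamma=\tr(T\Sigma_2)=\tr(\Sigma_2^{1/2}\Sigma_1\Sigma_2^{1/2})^{1/2}$.
\item For optimality, write $\tr\Gamma=\dbE\langle U_0,V_0\rangle$. Young's inequality with weight $T$ gives $\langle U_0,V_0\rangle\le\tfrac12\bigl(\langle T^{-1}U_0,U_0\rangle+\langle TV_0,V_0\rangle\bigr)$.
\item Take expectations: $\tr\Gamma\le\tfrac12\bigl(\tr(T^{-1}\Sigma_1)+\tr(T\Sigma_2)\bigr)$.
\item Using $T\Sigma_2T=\Sigma_1$, we get $\tr(T^{-1}\Sigma_1)=\tr(\Sigma_2T)$. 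So the bound equals $\tr(T\Sigma_2)$, matching (i).
\end{enumerate}
This step is the main technical point. Alternatively, one could simply cite Peyr\'e--Cuturi or Villani.

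\emph{Frobenius bound.} Use the suboptimal coupling $U_0=\Sigma_1^{1/2}G$, $V_0=\Sigma_2^{1/2}G$ with $G\sim\cNcal(0,I_m)$. It is a valid coupling, and it yields
\[
\mathfrak B^2(\Sigma_1,\Sigma_2)\le\dbE\bigl|(\Sigma_1^{1/2}-\Sigma_2^{1/2})G\bigr|^2=\|\Sigma_1^{1/2}-\Sigma_2^{1/2}\|_F^2.
\]
Equivalently, $\tr(\Sigma_2^{1/2}\Sigma_1\Sigma_2^{1/2})^{1/2}=\|\Sigma_1^{1/2}\Sigma_2^{1/2}\|_*\ge\tr(\Sigma_1^{1/2}\Sigma_2^{1/2})$, where $\|\cdot\|_*$ is the trace norm.
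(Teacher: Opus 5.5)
Your proof is correct. It takes a different route from the paper only in the sense that the paper gives no proof: it quotes all three relations from Peyr\'e--Cuturi.

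Your argument is self-contained and elementary:
\begin{enumerate}[(i)]
\item translation invariance reduces everything to the centered case;
\item the map $T=\Sigma_2^{-1/2}(\Sigma_2^{1/2}\Sigma_1\Sigma_2^{1/2})^{1/2}\Sigma_2^{-1/2}$ gives both an admissible coupling $U_0=TV_0$ and, through the $T$-weighted Young inequality, a matching upper bound on $\dbE\langle U_0,V_0\rangle$;
\item the synchronous coupling $U_0=\Sigma_1^{1/2}G$, $V_0=\Sigma_2^{1/2}G$ gives the Frobenius bound.
\end{enumerate}

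Your certificate in steps (ii)--(iv) uses only the second moments of the marginals. It therefore bounds the cost of \emph{every} coupling, Gaussian or not, so it works directly from the definition of $\cW_2$. The reformulation as a maximization of $\tr\Gamma$ over admissible cross-covariances is not needed, and you can drop it.

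Note also that the rest of the paper only uses upper bounds on $\cW_2$:
\begin{enumerate}[(i)]
\item Lemma~\ref{lem:policy-W2} needs $\cW_2^2(\mu_1,\mu_2)\le|m_1-m_2|^2+\|\Sigma_1^{1/2}-\Sigma_2^{1/2}\|_F^2$;
\item the proof of Theorem~\ref{Thm:TP} needs $\cW_2(\mu_1,\mu_2)\le|m_1-m_2|$ when $\Sigma_1=\Sigma_2$.
\end{enumerate}
Both follow in one line from your synchronous coupling $U=m_1+\Sigma_1^{1/2}G$, $V=m_2+\Sigma_2^{1/2}G$. So, for the paper's turnpike estimates, the exact Bures identity and your optimality argument could be bypassed entirely. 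The paper's citation buys brevity and the exact formula; your coupling argument shows that the Wasserstein turnpike rests on a more elementary fact.
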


%With the above, we have the following Lemma.
Combining Lemma~\ref{lem:Gaussian-W2} with Proposition~\ref{prop:parameter-convergence} yields the following estimate.

\begin{lemma}
\label{lem:policy-W2}
Let \ref{ass:L2-stabilizable}--\ref{ass:JC} hold.
Then for any $x\in\dbR^n$ and $0\le t\le T$,
\begin{equation}
    \label{policy-distance}
    \cW_2^2\left( \pi_T^{\lambda,*}(\cdot\mid t,x), \pi_\infty^{\lambda,*}(\cdot\mid x) \right) \le \cC(1+|x|^2+\lambda)e^{-2\gamma(T-t)}.
\end{equation}
\end{lemma}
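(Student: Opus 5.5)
We apply Lemma~\ref{lem:Gaussian-W2} directly to the two Gaussian feedback distributions $\pi_T^{\lambda,*}(\cdot\mid t,x)=\cNcal(\bar u_T(t,x),\Lambda_T^\lambda(t))$ and $\pi_\infty^{\lambda,*}(\cdot\mid x)=\cNcal(\bar u_\infty(x),\Lambda_\infty^\lambda)$. Both covariances lie in $\dbS^m_{++}$ by \eqref{spectral-bound}, so the lemma applies. It splits the squared distance into a mean part and a Bures part:
\[
\cW_2^2\bigl(\pi_T^{\lambda,*}(\cdot\mid t,x),\pi_\infty^{\lambda,*}(\cdot\mid x)\bigr)
\le |\bar u_T(t,x)-\bar u_\infty(x)|^2+\bigl\|(\Lambda_T^\lambda(t))^{1/2}-(\Lambda_\infty^\lambda)^{1/2}\bigr\|_F^2 .
\]
Each term is then bounded separately using Proposition~\ref{prop:parameter-convergence}.

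For the mean term, Proposition~\ref{prop:parameter-convergence} gives $|\bar u_T(t,x)-\bar u_\infty(x)|\le \cC(1+|x|)e^{-\gamma(T-t)}$. Squaring and using $(1+|x|)^2\le 2(1+|x|^2)$ yields the bound $\cC(1+|x|^2)e^{-2\gamma(T-t)}$.

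For the covariance term, the Frobenius and spectral norms on $m\times m$ matrices satisfy $\|\cdot\|_F\le\sqrt m\,\|\cdot\|$. The square-root estimate \eqref{sqrt-convergence} (equivalently the $\lambda^{-1/2}$-weighted term in Proposition~\ref{prop:parameter-convergence}) gives $\|(\Lambda_T^\lambda(t))^{1/2}-(\Lambda_\infty^\lambda)^{1/2}\|\le\cC\sqrt\lambda\,e^{-\gamma(T-t)}$. Squaring gives $\cC m\lambda e^{-2\gamma(T-t)}$.

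Adding the two bounds gives \eqref{policy-distance}, with $\cC$ independent of $T,t,x,\lambda$. The only point needing care is this uniformity in $\lambda$. It holds because the constants in Lemma~\ref{lem:covariance-convergence} were shown to be independent of $\lambda$, and the mean parameters $K_T,\ell_T$ do not depend on $\lambda$ at all. There is no genuine obstacle; the argument is a direct combination of the cited estimates.
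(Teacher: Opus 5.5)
Your proposal is correct and matches the paper's proof: apply Lemma~\ref{lem:Gaussian-W2} to bound the squared distance by the mean difference plus the Frobenius norm of the square-root covariance difference, then bound each term with Proposition~\ref{prop:parameter-convergence} and \eqref{sqrt-convergence}. Your extra bookkeeping (the $\sqrt m$ factor between Frobenius and spectral norms, $(1+|x|)^2\le 2(1+|x|^2)$, and the $\lambda$-independence of the constants) fills in details the paper leaves implicit.
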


\begin{proof}
%By Proposition~\ref{prop:parameter-convergence}, we have
%
%$$
%|\bar u_T(t,x)-\bar u_\infty(x)|\le \cC(1+|x|)e^{-\gamma(T-t)}.
%$$
By Lemma~\ref{lem:Gaussian-W2}, we get
\begin{align*}
    &\cW_2^2\left( \pi_T^{\lambda,*}(\cdot\mid t,x), \pi_\infty^{\lambda,*}(\cdot\mid x) \right)\le |\bar u_T(t,x)-\bar u_\infty(x)|^2 + \left\| \big(\Lambda_T^\lambda(t)\big)^{1/2} - \big(\Lambda_\infty^\lambda\big)^{1/2}
    \right\|_F^2.
\end{align*}
%Using the  estimate \rf{sqrt-convergence} in Lemma~\ref{lem:covariance-convergence}, we get \eqref{policy-distance}.
Combining this with Proposition~\ref{prop:parameter-convergence}
	and \eqref{sqrt-convergence} yields \eqref{policy-distance}.
\end{proof}

Along the state processes $X^{T,\lambda}$ and $X^{*,\lambda}$,
define the finite-horizon control mean and the reference control mean by
\begin{equation}\label{def-u-u}
    \begin{aligned}
    &\bar u^{T,\lambda}(t) = \bar u_T(t,X^{T,\lambda}(t)) = -K_T(t)X^{T,\lambda}(t)-\ell_T(t),\\
    &\bar u^{*,\lambda}(t) = \bar u_\infty(X^{*,\lambda}(t)) = -K_\infty X^{*,\lambda}(t)-\ell_\infty.
    \end{aligned}
\end{equation}

\begin{lemma}
Under the conditions of Theorem~\ref{thm:state-turnpike}, we have
\begin{equation}
    \label{mean-turnpike}
    \dbE|\bar u^{T,\lambda}(t)-\bar u^{*,\lambda}(t)|^2 \le \cC e^{-\gamma t}|x-x_*|^2 + \cC(1+|x|^2+\lambda)e^{-\gamma(T-t)}.
\end{equation}
\end{lemma}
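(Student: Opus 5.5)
The plan is to split the mean difference into a feedback-parameter part and a state part, and then bound each with results already in hand. By \eqref{def-u-u},
\begin{equation*}
\bar u^{T,\lambda}(t)-\bar u^{*,\lambda}(t)
=-\big(K_T(t)-K_\infty\big)X^{T,\lambda}(t)-\big(\ell_T(t)-\ell_\infty\big)
-K_\infty\big(X^{T,\lambda}(t)-X^{*,\lambda}(t)\big).
\end{equation*}
Squaring and using $|a+b+c|^2\le3(|a|^2+|b|^2+|c|^2)$ reduces the claim to bounding three terms in expectation.

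For the first two terms, Proposition~\ref{prop:parameter-convergence} gives $\|K_T(t)-K_\infty\|^2+|\ell_T(t)-\ell_\infty|^2\le\cC e^{-2\gamma(T-t)}$. The uniform moment bound \eqref{state-moment}, $\sup_{0\le t\le T}\dbE|X^{T,\lambda}(t)|^2\le\cC(1+|x|^2+\lambda)$, then shows that these two terms together contribute at most $\cC(1+|x|^2+\lambda)e^{-2\gamma(T-t)}$.

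For the third term, $\|K_\infty\|$ is a fixed constant, so it suffices to bound $\dbE|X^{T,\lambda}(t)-X^{*,\lambda}(t)|^2$. Here I would not quote \eqref{state-turnpike} directly, since its constant $K_{x,\lambda}$ is not explicit. Instead I would reuse the decomposition through the intermediate process $X^{\infty,\lambda,x}$ from the proof of Theorem~\ref{thm:state-turnpike}. Estimate \eqref{same-initial} gives $\cC(1+|x|^2+\lambda)e^{-2\gamma_0(T-t)}$, and estimate \eqref{initial-estimate} gives $\cC e^{-\kappa t}|x-x_*|^2$.

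To finish, collect the terms and rename the rate: let $\gamma$ denote $\min\{\kappa,2\gamma_0,2\gamma\}$ with the convergence rates above, noting that $e^{-2a}\le e^{-a}$. This produces exactly the form \eqref{mean-turnpike}. Nothing here is a real obstacle. The only care point is that the $|x-x_*|^2$ factor must come purely from \eqref{initial-estimate}, while every other term carries the $(1+|x|^2+\lambda)$ factor together with the $e^{-\gamma(T-t)}$ decay.
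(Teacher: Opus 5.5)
Your proposal is correct and follows essentially the same route as the paper: the same three-term splitting of $\bar u^{T,\lambda}(t)-\bar u^{*,\lambda}(t)$, then Proposition~\ref{prop:parameter-convergence} together with the moment bound \eqref{state-moment} and the state-error bound, and finally a decrease of $\gamma$. Your observation that the explicit factors $|x-x_*|^2$ and $(1+|x|^2+\lambda)$ must be taken from \eqref{same-initial} and \eqref{initial-estimate}, rather than from the unspecified constant $K_{x,\lambda}$ in \eqref{state-turnpike}, only makes explicit a step that the paper's proof uses implicitly.
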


\begin{proof}
By \rf{def-u-u}, we have
\begin{align*}
    \bar u^{T,\lambda}(t)-\bar u^{*,\lambda}(t) = &-\big(K_T(t)-K_\infty\big)X^{T,\lambda}(t) -K_\infty\big(X^{T,\lambda}(t)-X^{*,\lambda}(t)\big)
    -\big(\ell_T(t)-\ell_\infty\big).
\end{align*}
Then by Proposition~\ref{prop:parameter-convergence}, we get
\begin{align*}
    \dbE|\bar u^{T,\lambda}(t)-\bar u^{*,\lambda}(t)|^2&\le \cC\|K_T(t)-K_\infty\|^2\dbE|X^{T,\lambda}(t)|^2 +\cC\dbE|X^{T,\lambda}(t)-X^{*,\lambda}(t)|^2
    +\cC|\ell_T(t)-\ell_\infty|^2\\
    %&\quad\le \cC e^{-2\gamma(T-t)}\dbE|X^{T,\lambda}(t)|^2 +\cC\dbE|X^{T,\lambda}(t)-X^{*,\lambda}(t)|^2\\
    %&\qquad+\cC e^{-2\gamma(T-t)}\\
    &\le \cC e^{-\gamma t}|x-x_*|^2 +\cC(1+|x|^2+\lambda)e^{-\gamma(T-t)}.
\end{align*}
After decreasing $\gamma$ if necessary, this is
\eqref{mean-turnpike}.
\end{proof}

Now we are ready to present the main result of this paper.

\begin{theorem}\label{Thm:TP}
Under the conditions of Theorem~\ref{thm:state-turnpike}, for any fixed initial state $x$ and exploration weight $\lambda>0$, there exist constants $K=K_{x,\lambda}>0$ and $\mu>0$, independent of $T,t$, such that for all $0\le t\le T$,
\begin{equation}
    \label{policy-turnpike}
    \begin{aligned}
    &\dbE|X^{T,\lambda}(t)-X^{*,\lambda}(t)|^2 + \dbE \cW_2^2\left( \pi_T^{\lambda,*}(\cdot\mid t,X^{T,\lambda}(t)), \pi_\infty^{\lambda,*}(\cdot\mid X^{*,\lambda}(t)) \right)\\
    &\le K\big(e^{-\mu t}+e^{-\mu(T-t)}\big).
    \end{aligned}
\end{equation}
\end{theorem}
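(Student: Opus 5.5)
The state term on the left of \eqref{policy-turnpike} is already controlled by Theorem~\ref{thm:state-turnpike}. So it remains to bound the Wasserstein term. Both distributions inside $\cW_2$ are Gaussian, and we apply Lemma~\ref{lem:Gaussian-W2} pathwise, that is, for each fixed $\omega$:
\[
\cW_2^2\left( \pi_T^{\lambda,*}(\cdot\mid t,X^{T,\lambda}(t)), \pi_\infty^{\lambda,*}(\cdot\mid X^{*,\lambda}(t)) \right)
\le |\bar u^{T,\lambda}(t)-\bar u^{*,\lambda}(t)|^2+\bigl\|\bigl(\Lambda_T^\lambda(t)\bigr)^{1/2}-\bigl(\Lambda_\infty^\lambda\bigr)^{1/2}\bigr\|_F^2 .
\]
Here the means are those in \eqref{def-u-u}. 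The covariances $\Lambda_T^\lambda(t)$ and $\Lambda_\infty^\lambda$ are deterministic, so the second term does not depend on $\omega$. Measurability of the random variable $\cW_2^2(\cdots)$ is not an issue, since it is a continuous function of the Gaussian parameters and of $X^{T,\lambda}(t)$ and $X^{*,\lambda}(t)$.

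Next we take expectations and treat the two terms separately.

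\begin{enumerate}[(i)]
\item For the mean term, the mean-turnpike estimate \eqref{mean-turnpike} gives the bound $\cC e^{-\gamma t}|x-x_*|^2+\cC(1+|x|^2+\lambda)e^{-\gamma(T-t)}$.
\item For the covariance term, we use \eqref{sqrt-convergence} from Lemma~\ref{lem:covariance-convergence}, together with the equivalence of the Frobenius and spectral norms on $\dbR^{m\times m}$, namely $\|\cdot\|_F\le\sqrt m\|\cdot\|$. This gives the bound $\cC m\lambda e^{-2\gamma(T-t)}$.
\end{enumerate}

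Adding these two bounds to \eqref{state-turnpike}, we take $\mu$ to be the minimum of all the exponents appearing, using $e^{-2\gamma s}\le e^{-\gamma s}$. We take $K_{x,\lambda}$ to be the sum of the constants, each multiplied by $1+|x|^2+|x-x_*|^2+\lambda$. All of these are independent of $T$ and $t$, which yields \eqref{policy-turnpike}.

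There is no real obstacle, since every ingredient is already in place. The only point that needs care is that the constants $\gamma$ and $\mu$ in the earlier results may differ from one another. They are reconciled by taking their minimum, which is legitimate because all of them are independent of $T$ and $t$.
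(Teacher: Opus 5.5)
Your proof is correct, and it uses the same ingredients as the paper: parameter convergence \eqref{sqrt-convergence}, the uniform moment bound, and the state turnpike. The only difference is bookkeeping. The paper inserts the intermediate Gaussian $\pi_\infty^{\lambda,*}(\cdot\mid X^{T,\lambda}(t))$, then applies the triangle inequality with Lemma~\ref{lem:policy-W2} and the observation that the two infinite-horizon Gaussians share a covariance; you instead apply Lemma~\ref{lem:Gaussian-W2} directly and absorb the mean discrepancy into the already-proved estimate \eqref{mean-turnpike}, which itself splits $\bar u^{T,\lambda}-\bar u^{*,\lambda}$ in the same way.
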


\begin{proof}
By  Lemma~\ref{lem:policy-W2}, we obtain
\begin{align*}
    &\cW_2^2\left( \pi_T^{\lambda,*}(\cdot\mid t,X^{T,\lambda}(t)), \pi_\infty^{\lambda,*}(\cdot\mid X^{*,\lambda}(t)) \right)\\
    &\le 2\cW_2^2\left( \pi_T^{\lambda,*}(\cdot\mid t,X^{T,\lambda}(t)), \pi_\infty^{\lambda,*}(\cdot\mid X^{T,\lambda}(t)) \right)+ 2\cW_2^2\left( \pi_\infty^{\lambda,*}(\cdot\mid X^{T,\lambda}(t)), \pi_\infty^{\lambda,*}(\cdot\mid X^{*,\lambda}(t)) \right)\\
    &\le \cC\big(1+|X^{T,\lambda}(t)|^2+\lambda\big)e^{-2\gamma(T-t)} +\cC|X^{T,\lambda}(t)-X^{*,\lambda}(t)|^2.
\end{align*}
The last inequality also uses the fact that the two infinite-horizon Gaussian distributions have the same covariance, so their Wasserstein distance equals the distance between their means. Then by \eqref{state-moment} and Theorem~\ref{thm:state-turnpike}, we obtain
\[
    \begin{aligned}
    &\dbE \cW_2^2\left( \pi_T^{\lambda,*}(\cdot\mid t,X^{T,\lambda}(t)), \pi_\infty^{\lambda,*}(\cdot\mid X^{*,\lambda}(t)) \right)\\
    &\le \cC\big(1+\dbE|X^{T,\lambda}(t)|^2+\lambda\big)e^{-2\gamma(T-t)} +\cC\dbE|X^{T,\lambda}(t)-X^{*,\lambda}(t)|^2\\
    &\le K\big(e^{-\mu t}+e^{-\mu(T-t)}\big),
    \end{aligned}
\]
for some $K,\mu>0$.
Combining this with the state-process estimate in Theorem~\ref{thm:state-turnpike} yields \eqref{policy-turnpike}.
\end{proof}

In particular, by Proposition~\ref{prop:static-feedback-compatibility},
$\pi_\infty^{\lambda,*}(\cdot\mid x_*)=\cNcal(u_*,\Lambda_*^\lambda)$.
Thus, when the reference state is at the static center, the above result is compatible with the static Gaussian optimal distribution constructed in Section~\ref{sec:static-distribution-optimization}.

\begin{corollary}\label{coroll:TP}
For any $\delta\in(0,1/2)$, there exist constants $K=K_{x,\lambda,\delta}>0$ and $\mu=\mu_\d>0$, independent of $T,t$, such that for any $t\in[\delta T,(1-\delta)T]$,
\begin{equation*}
    \begin{aligned}
    \dbE|X^{T,\lambda}(t)-X^{*,\lambda}(t)|^2 + \dbE \cW_2^2\left( \pi_T^{\lambda,*}(\cdot\mid t,X^{T,\lambda}(t)), \pi_\infty^{\lambda,*}(\cdot\mid X^{*,\lambda}(t)) \right)
    \le K e^{-\mu T}.
    \end{aligned}
\end{equation*}
\end{corollary}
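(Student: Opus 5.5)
The plan is to derive Corollary~\ref{coroll:TP} directly from Theorem~\ref{Thm:TP}, with no new analysis. Fix $x$, $\lambda>0$ and $\delta\in(0,1/2)$. Theorem~\ref{Thm:TP} supplies constants $K_0=K_{x,\lambda}>0$ and $\mu_0>0$, independent of $T$ and $t$, such that for all $0\le t\le T$ the left-hand side of the claimed inequality is at most $K_0\bigl(e^{-\mu_0 t}+e^{-\mu_0(T-t)}\bigr)$. Only this two-sided exponential envelope is used; the structure of $X^{T,\lambda}$, $X^{*,\lambda}$ and the Gaussian feedback distributions plays no further role.

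Next, I restrict to $t\in[\delta T,(1-\delta)T]$. There $t\ge\delta T$ and $T-t\ge\delta T$. Hence
\[
e^{-\mu_0 t}+e^{-\mu_0(T-t)}\le 2e^{-\mu_0\delta T}.
\]
Setting $K:=2K_0$, which depends on $x,\lambda$ (and trivially on $\delta$), and $\mu:=\mu_0\delta>0$, which depends only on $\delta$ and the constant $\mu_0$ from Theorem~\ref{Thm:TP}, gives the bound $Ke^{-\mu T}$. Neither constant depends on $T$ or $t$, which is exactly the assertion.

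There is no real obstacle. The only points to check are that $\mu_0$ in Theorem~\ref{Thm:TP} is indeed uniform in $T$, which is stated there, and that the interval $[\delta T,(1-\delta)T]$ is nonempty, which follows from $\delta<1/2$.
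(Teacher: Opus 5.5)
Your proposal is correct and matches the paper's own proof: restrict to $t\in[\delta T,(1-\delta)T]$, use $t\ge\delta T$ and $T-t\ge\delta T$ to bound both exponentials in Theorem~\ref{Thm:TP} by $e^{-\mu_0\delta T}$, and take $K=2K_0$, $\mu=\mu_0\delta$. Your explicit constants are fully consistent with the dependence $K_{x,\lambda,\delta}$, $\mu_\delta$ claimed in the statement.
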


\begin{proof}
If $t\in[\delta T,(1-\delta)T]$, then $t\ge\delta T$ and $T-t\ge\delta T$.
Using $e^{-\mu t}\le e^{-\mu\delta T}$ and $e^{-\mu(T-t)}\le e^{-\mu\delta T}$ in the state estimate \eqref{state-turnpike} and the Wasserstein estimate \eqref{policy-turnpike} gives the result.
\end{proof}

%\section*{Appendix}
%\subsection*{Proof of Theorem \ref{thm:Damm}}
\appendix
\section{Proof of Theorem~\ref{thm:Damm}}
	Write $\|z\|_\infty^2=\dbE\int_0^\infty|z(t)|^2\,dt$.
	
	\textbf{$(i)\Rightarrow(ii)$.}
	We first construct $P_1$.
	Set
	\begin{equation*}
		\alpha=1+\|Q-S^\top R^{-1}S\|,\qquad
		Q_1=S^\top R^{-1}S+\alpha I_n,\qquad Q_2=Q_1-Q.
	\end{equation*}
	Then
	\[
	Q=Q_1-Q_2,\qquad Q_2\ge I_n,\qquad
	Q_1-S^\top R^{-1}S=\alpha I_n>0.
	\]
%	For $x\in\dbR^n$, consider the controlled homogeneous system \eqref{state-homo},
%	\[
%	\left\{
%	\begin{aligned}
%		&dX(t)=\bigl(AX(t)+Bu(t)\bigr)\,dt
%		+\bigl(CX(t)+Du(t)\bigr)\,dW(t),
%		\qquad t\ge0,\\
%		&X(0)=x,
%	\end{aligned}
%	\right.
%	\]
%	with the auxiliary cost functional obtained by replacing $Q$ with $Q_1$:
	For $x\in\dbR^n$, consider \eqref{state-homo} with the auxiliary
		cost functional obtained by replacing $Q$ with $Q_1$:
	\begin{equation*}
		J_{1,\infty}^0(x;u)
		=\dbE\int_0^\infty
		\bigl[X(t)^\top Q_1X(t)+2u(t)^\top SX(t)+u(t)^\top Ru(t)\bigr]\,dt,
	\end{equation*}
	where $u\in\mathcal U_{\mathrm{ad}}(x)$.
	This auxiliary problem satisfies the strong standard condition.
	By the $L^2$-stabilizability assumption and Sun--Wang--Yong~\cite[Proposition~3.1]{SunWangYong2022},
	%the corresponding algebraic Riccati equation admits a positive definite stabilizing solution $P_1$ satisfying
		there exists a positive definite stabilizing solution
		$P_1$ of the auxiliary algebraic Riccati equation
	\begin{equation}\label{P1}
		A^\top P_1+P_1A+C^\top P_1C+Q_1=K_1^\top M_1K_1,
	\end{equation}
	where
	\begin{equation}\label{P1-gain}
		\begin{aligned}
			M_1&=R+D^\top P_1D>0,
			&\qquad K_1&=M_1^{-1}(B^\top P_1+D^\top P_1C+S),\\
			A_1&=A-BK_1,
			&\qquad C_1&=C-DK_1,
		\end{aligned}
	\end{equation}
	and $[A_1,C_1]$ is mean-square exponentially stable.
	
	Let $u(t)=v(t)-K_1X(t)$.
	Substitution into \eqref{state-homo} gives
	\begin{equation}\label{stable-state}
		\left\{
		\begin{aligned}
			&dX(t)=\bigl[A_1X(t)+Bv(t)\bigr]\,dt
			+\bigl[C_1X(t)+Dv(t)\bigr]\,dW(t),
			\qquad t\ge0,\\
			&X(0)=x.
		\end{aligned}
		\right.
	\end{equation}
	Since $[A_1,C_1]$ is $L^2$-stable, every $v\in L^2_{\dbF}(0,\infty;\dbR^m)$ yields a state $X\in L^2_{\dbF}(0,\infty;\dbR^n)$; see Sun--Yong~\cite{SunYong2018}.
	Consequently, $u=v-K_1X$ belongs to $\mathcal U_{\mathrm{ad}}(x)$.
	Conversely, every $u\in\mathcal U_{\mathrm{ad}}(x)$ and its corresponding state $X$ determine $v=u+K_1X\in L^2_{\dbF}(0,\infty;\dbR^m)$.
	These two transformations are inverses of each other.
	
	Applying It\^o's formula to $X(t)^\top P_1X(t)$ along the solution of \eqref{stable-state} and using \eqref{P1}--\eqref{P1-gain}, we obtain
	\begin{equation}\label{P1-Ito}
		\begin{aligned}
			&\frac{d}{dt}\dbE[X(t)^\top P_1X(t)]=\dbE\bigl[
			X(t)^\top(A^\top P_1+P_1A+C^\top P_1C)X(t)\\
			&\q
			+2u(t)^\top(B^\top P_1+D^\top P_1C)X(t)
			+u(t)^\top D^\top P_1Du(t)
			\bigr]\\
			&=\dbE\bigl[
			X(t)^\top(K_1^\top M_1K_1-Q_1)X(t)
			+2u(t)^\top(M_1K_1-S)X(t)
			+u(t)^\top(M_1-R)u(t)
			\bigr]\\
			% &={}\dbE\bigl[
			% (K_1X(t))^\top M_1(K_1X(t))
			% +2u(t)^\top M_1(K_1X(t))
			% +u(t)^\top M_1u(t)-X(t)^\top Q_1X(t)
			% -2u(t)^\top SX(t)-u(t)^\top Ru(t)
			% \bigr]\\
			&=\dbE\bigl[
			(u(t)+K_1X(t))^\top M_1(u(t)+K_1X(t))
			-X(t)^\top Q_1X(t)-2u(t)^\top SX(t)-u(t)^\top Ru(t)
			\bigr]\\
			&=\dbE\bigl[
			v(t)^\top M_1v(t)-X(t)^\top Q_1X(t)
			-2u(t)^\top SX(t)-u(t)^\top Ru(t)
			\bigr].
		\end{aligned}
	\end{equation}
	Substituting $Q=Q_1-Q_2$ and integrating over $[0,T]$, we obtain
	\begin{equation}\label{finite-cost}
		\begin{aligned}
			J_T^0(x;u)+\dbE[X(T)^\top P_1X(T)]
			={}&x^\top P_1x
			+\dbE\int_0^T
			\bigl[v(t)^\top M_1v(t)-X(t)^\top Q_2X(t)\bigr]\,dt.
		\end{aligned}
	\end{equation}
	Since $[A_1,C_1]$ is $L^2$-stable and $v$ is square-integrable, we have $\dbE|X(T)|^2\to0$ as $T\to\infty$.
	The square-integrability of $X$, $u$, and $v$ also ensures that the running-cost integrals converge.
	Thus, letting $T\to\infty$ in \eqref{finite-cost} yields
	\begin{equation}\label{split-cost}
		\begin{aligned}
			&J_\infty^0(x;u)=x^\top P_1x+J_{2,\infty}(x;v),\\
			&J_{2,\infty}(x;v):=\dbE\int_0^\infty
			\bigl[v(t)^\top M_1v(t)-X(t)^\top Q_2X(t)\bigr]\,dt.
		\end{aligned}
	\end{equation}
	
	Now we construct $P_2$.
	Fix $v\in L^2_{\dbF}(0,\infty;\dbR^m)$, and let $X$ solve \eqref{stable-state} with $X(0)=0$.
	By the preceding control transformation, $u(t)=v(t)-K_1X(t)$ belongs to $\mathcal U_{\mathrm{ad}}(0)$.
	The coercivity condition \eqref{C-infinite} and the cost decomposition \eqref{split-cost} therefore give
	\[
	J_{2,\infty}(0;v)=J_\infty^0(0;u)
	\ge\delta\|X\|_\infty^2.
	\]
	Consequently,
	\begin{align*}
		\lambda_{\min}(M_1)\|v\|_\infty^2
		&\le\dbE\int_0^\infty v(t)^\top M_1v(t)\,dt
		=J_{2,\infty}(0;v)
		+\dbE\int_0^\infty X(t)^\top Q_2X(t)\,dt\\
		&\le J_{2,\infty}(0;v)+\|Q_2\|\|X\|_\infty^2
		\le\left(1+\frac{\|Q_2\|}{\delta}\right)J_{2,\infty}(0;v).
	\end{align*}
	It follows that
	\begin{equation}\label{transformed-UC}
		J_{2,\infty}(0;v)\ge
		\frac{\delta\lambda_{\min}(M_1)}{\delta+\|Q_2\|}
		\|v\|_\infty^2,
		\qquad \forall v\in L^2_{\dbF}(0,\infty;\dbR^m).
	\end{equation}
	For the system \eqref{stable-state}, the uncontrolled system $[A_1,C_1]$ is $L^2$-stable, and \eqref{transformed-UC} establishes uniform convexity of the associated cost functional $J_{2,\infty}$.
	By Sun--Yong~\cite[Theorem~5.3]{SunYong2018}, its algebraic Riccati equation admits a stabilizing solution $P_2\in\dbS^n$ satisfying
	\begin{equation}\label{P2}
		A_1^\top P_2+P_2A_1+C_1^\top P_2C_1-Q_2
		=K_2^\top M_2K_2,
	\end{equation}
	where
	\begin{equation}\label{P2-gain}
		M_2=M_1+D^\top P_2D>0,\qquad
		K_2=M_2^{-1}(B^\top P_2+D^\top P_2C_1),
	\end{equation}
	and $[A_1-BK_2,C_1-DK_2]$ is $L^2$-stable.
	
	%Finally, we combine the two Riccati equations.
	%Set $P=P_1+P_2$ and $K=K_1+K_2$.
	%Using $A=A_1+BK_1$ and $C=C_1+DK_1$, we obtain
	We now combine \eqref{P1} and \eqref{P2}.
		Set $P=P_1+P_2$ and $K=K_1+K_2$.
		Using \eqref{P1-gain}, we obtain
	\begin{align*}
		&A^\top P_2+P_2A+C^\top P_2C
		=A_1^\top P_2+P_2A_1+C_1^\top P_2C_1\\
		&\q+K_1^\top B^\top P_2+P_2BK_1
		+K_1^\top D^\top P_2C_1+C_1^\top P_2DK_1
		+K_1^\top D^\top P_2DK_1\\
		&=Q_2+K_2^\top M_2K_2
		+K_1^\top(B^\top P_2+D^\top P_2C_1)
		+(P_2B+C_1^\top P_2D)K_1
		+K_1^\top D^\top P_2DK_1\\
		&=Q_2+K_2^\top M_2K_2+K_1^\top M_2K_2
		+K_2^\top M_2K_1+K_1^\top D^\top P_2DK_1.
	\end{align*}
	Adding \eqref{P1} % namely,
	% \[
	% A^\top P_1+P_1A+C^\top P_1C+Q_1=K_1^\top M_1K_1,
	% \]
	and using $Q=Q_1-Q_2$, we obtain
	\begin{equation}\label{riccati-sum}
		\begin{aligned}
			A^\top P+PA+C^\top PC+Q
			={}&K_1^\top(M_1+D^\top P_2D)K_1
			+K_1^\top M_2K_2+K_2^\top M_2K_1+K_2^\top M_2K_2\\
			={}&(K_1+K_2)^\top M_2(K_1+K_2)
			=K^\top M_2K.
		\end{aligned}
	\end{equation}
	Moreover,
	\begin{equation}\label{riccati-sum-gain}
		\begin{aligned}
			&R+D^\top PD
			=R+D^\top P_1D+D^\top P_2D=M_2>0,\\
			&B^\top P+D^\top PC+S
			=M_1K_1+B^\top P_2+D^\top P_2(C_1+DK_1)\\
			&\q=M_1K_1+M_2K_2+D^\top P_2DK_1
			=(M_1+D^\top P_2D)K_1+M_2K_2=M_2K,\\
			&K=(R+D^\top PD)^{-1}(B^\top P+D^\top PC+S).
		\end{aligned}
	\end{equation}
	Substituting the expressions for $M_2$ and $K$ in \eqref{riccati-sum-gain} into \eqref{riccati-sum} yields \eqref{ARE} for $P$.
	Note that
	\[
	A-BK=A_1-BK_2,\qquad C-DK=C_1-DK_2,
	\]
	so $[A-BK,C-DK]$ is $L^2$-stable and hence mean-square exponentially stable.
	Taking $P_\infty=P$ and $K_\infty=K$ proves $(ii)$.
	
	\textbf{$(ii)\Rightarrow(iii)$.}
	Fix $x\in\dbR^n$ and $u\in\mathcal U_{\mathrm{ad}}(x)$,
	and let $X$ solve \eqref{state-homo}.
	Set $v(t)=u(t)+K_\infty X(t)$.
	Then the state equation can be written as
	\[
	\left\{
	\begin{aligned}
		&dX(t)=\bigl[(A-BK_\infty)X(t)+Bv(t)\bigr]\,dt\\
		&\qq\qq+\bigl[(C-DK_\infty)X(t)+Dv(t)\bigr]\,dW(t),
		\qquad t\ge0,\\
		&X(0)=x.
	\end{aligned}
	\right.
	\]
	Since $u\in\mathcal U_{\mathrm{ad}}(x)$, both $X$ and $u$ are square-integrable on $[0,\infty)$, and hence $v\in L^2_{\dbF}(0,\infty;\dbR^m)$.
	The $L^2$-stability of $[A-BK_\infty,C-DK_\infty]$, together with the square-integrability of $v$, implies that $\lim_{T\to\infty}\dbE|X(T)|^2=0$.
	Applying It\^o's formula as in \eqref{P1-Ito} and using \eqref{ARE}--\eqref{gain}, we obtain
	\begin{align*}
		&\frac{d}{dt}\dbE[X(t)^\top P_\infty X(t)]
		+\dbE\bigl[
		X(t)^\top QX(t)+2u(t)^\top SX(t)+u(t)^\top Ru(t)
		\bigr]
		=\dbE[v(t)^\top M_\infty v(t)].
	\end{align*}
	Integrating over $[0,T]$ and letting $T\to\infty$ yields
	\begin{equation}\label{verification}
		J_\infty^0(x;u)
		=x^\top P_\infty x
		+\dbE\int_0^\infty v(t)^\top M_\infty v(t)\,dt.
	\end{equation}
	
 	Now take $x=0$. Applying Lemma~\ref{lem:state-estimate} to the stable closed-loop system and letting the time horizon tend to infinity, we obtain a constant $\cC>0$, independent of $u$, such that
	\[
	\|X\|_\infty^2\le\cC\|v\|_\infty^2.
	\]
	Since $u(t)=v(t)-K_\infty X(t)$, we have
	\[
	\|u\|_\infty^2
	\le 2\|v\|_\infty^2
	+2\|K_\infty\|^2\|X\|_\infty^2
	\le \bigl(2+2\cC\|K_\infty\|^2\bigr)\|v\|_\infty^2.
	\]
	Combining these estimates with \eqref{verification} and $M_\infty>0$, we obtain
	\[
	\begin{aligned}
	J_\infty^0(0;u)
	&\ge \lambda_{\min}(M_\infty)\|v\|_\infty^2
	\ge
	\frac{\lambda_{\min}(M_\infty)}
	{2+\cC(1+2\|K_\infty\|^2)}
	\bigl(\|X\|_\infty^2+\|u\|_\infty^2\bigr).
	\end{aligned}
	\]
	This proves $(ii)\Rightarrow(iii)$.
	
	The implication $(iii)\Rightarrow(i)$ follows immediately by dropping the nonnegative control term from the lower bound in $(iii)$.

	Under these equivalent conditions, \eqref{verification} and $M_\infty>0$ imply that
	\[
	J_\infty^0(x;u)\ge x^\top P_\infty x,
	\qquad \forall u\in\mathcal U_{\mathrm{ad}}(x),
	\]
	with equality if and only if $u=-K_\infty X$.
	The mean-square exponential stability of the closed-loop system ensures that this feedback control is admissible.
	Thus \eqref{optimal} gives the optimal value and the unique optimal control for each initial state $x$.
	
	To prove uniqueness of the stabilizing solution, let $\widehat P\in\dbS^n$ be another solution of \eqref{ARE} such that $R+D^\top\widehat P D>0$ and the associated closed-loop system is mean-square exponentially stable.
	Applying the same verification argument to $\widehat P$ yields
	\[
	x^\top\widehat P x=V_\infty^0(x)=x^\top P_\infty x,
	\qquad \forall x\in\dbR^n.
	\]
	Since both matrices are symmetric, this identity implies $\widehat P=P_\infty$.

\section*{Acknowledgments}
YH is supported in part by National Key Research and Development Program of China (2025YFA1016903), National Natural Science Foundation of China
(12571327, 12426311), Project of Educational Commission of Guangdong Province (2025ZDZX2059), Shenzhen Science and Technology Program (RCJC20221008092753082).
HW is supported in part by National Natural Science Foundation of China (12522121, 12631016).
JX is supported in part by  National Key Research and Development Program of China (2022YFA1006102) and
National Natural Science Foundation of China (12471418).

\ms

\noindent
\textbf{Statement on AI use:}
This manuscript was written entirely by the authors. AI-assisted tools  were used solely for language polishing and grammatical correction, and not for generating scientific content.

\begingroup
\let\ss\originalss

\endgroup

\end{document}